\title[Ancient and expanding spin ALE Ricci flows]{Ancient and expanding spin ALE Ricci flows}
\author[Isaac M. Lopez]{Isaac M. Lopez}
\author[Tristan Ozuch]{Tristan Ozuch}
\address{MIT, Department of Mathematics, 77 Massachusetts Avenue, Cambridge, MA 02139, USA.}
\email{imlopez@mit.edu, ozuch@mit.edu}
\date{}
\numberwithin{equation}{section}
\begin{document}
\newcommand{\vol}{\text{Vol}}
\newcommand{\euc}{\text{euc}}
\newcommand{\Ric}{\mathrm{Ric}}
\newcommand{\Scal}{\text{Scal}}
\newcommand{\Hess}{\text{Hess}}
\newcommand{\ric}{\text{Ric}}
\newcommand{\C}{\mathcal{C}}
\newcommand{\R}{\mathbb{R}}
\newcommand{\D}{\Delta}
\newcommand{\bn}{\bigskip \noindent}
\newcommand{\T}{\mathcal{T}}
\newcommand{\ov}{\overline}
\newcommand{\osc}{\text{osc}}
\newcommand{\E}{\mathcal{E}}
\newcommand{\N}{\mathbb{N}}
\newcommand{\cpt}{\subset\subset}
\newcommand{\st}{\text{ } | \text{ }}
\newcommand{\Z}{\mathbb{Z}}
\newcommand{\andd}{\text{ } \text{ } \text{ and } \text{ } \text{ }}
\newcommand{\hess}{\text{Hess}}
\newcommand{\Rm}{\text{Rm}}
\renewcommand{\div}{\operatorname{div}}
\renewcommand{\L}{\mathcal{L}}
\newtheorem{theorem}{Theorem}[section]
\newtheorem{corollary}[theorem]{Corollary}
\newtheorem{lemma}[theorem]{Lemma}
\newtheorem{definition}[theorem]{Definition}
\newtheorem{question}[theorem]{Question}
\newtheorem{proposition}[theorem]{Proposition}
\newtheorem{conjecture}[theorem]{Conjecture}
\newtheorem{remark}{Remark}
\newtheorem{example}[theorem]{Example}
\newcommand{\tr}{\operatorname{tr}}
\newcommand{\rel}{\text{ } \mathrm{rel}}
\renewcommand{\r}{\rightarrow}
\newcommand{\scal}{\mathrm{Scal}}
\newcommand{\W}{\mathcal{W}}
\newcommand{\F}{\mathcal{F}}
\renewcommand{\N}{\mathcal{N}}
\renewcommand{\geq}{\geqslant}
\renewcommand{\leq}{\leqslant}
\newcommand{\ALE}{\mathrm{ALE}}
\newcommand{\ADM}{\mathrm{ADM}}

\maketitle 

\begin{abstract}
    We classify spin ALE ancient Ricci flows and spin ALE expanding solitons with suitable groups at infinity. In particular, the only spin ancient Ricci flows with groups at infinity in $SU(2)$ and mild decay at infinity are hyperkähler ALE metrics. The main idea of the proof, of independent interest, consists in showing that the large-scale behavior of Perelman's $\mu$-functional on any ALE orbifold with non-negative scalar curvature is controlled by a renormalized $\lambda_{\ALE}$-functional related to a notion of weighted mass.
\end{abstract}

\setcounter{tocdepth}{1}

\tableofcontents

\section{Introduction}

Understanding Ricci flow in dimension $4$ is a major avenue for research towards applications to questions in $4$-dimensional topology, where the main concern is the homeotype of \textit{spin} manifolds. The main challenge consists in classifying in some way the topological surgeries corresponding to the singular times of the flow. Rescaling a Ricci flow close to a singular time yields \textit{ancient solutions} of the Ricci flow whose classification is central. \textit{Expanding solitons} may then be used to resolve finite-time singularities and restart the flow. In this article, we show that there is an intriguing rigidity for an important class of spin ancient and expanding flows, under topological assumptions.

\subsection{Main motivations}

\subsubsection{Orbifold singularities along Ricci flow}

Compared to dimension $3$ and lower, a new type of singularity from dimension $4$ is that of (isolated) \textit{orbifold singularities} modeled on $\mathbb{R}^4/\Gamma$ with $\Gamma\subset SO(4)$ acting freely on $\mathbb{S}^3$, the only Ricci-flat cones in dimension $4$. They are the most severe types of singularities of specific limits of infinite-time blow-downs of immortal Ricci flows, and of the singular-time blow-ups limit \cite{bamler-21-1, bamler-2023,
bamler-2021-3}.  Therefore, a major question has been:
\begin{question}
    How do orbifold singularities form or are resolved along Ricci flow?
\end{question}
Blowing up (or down) such singularity formation (or resolution) at specific scales leads to \textit{ancient} Ricci flows with tangent soliton $\mathbb{R}^4/\Gamma$ at $-\infty$. Ricci-flat ALE metrics are obvious examples of such ancient flows.

\begin{question}
    What are the ancient Ricci flows with tangent soliton $\mathbb{R}^4/\Gamma$ at $-\infty$?
\end{question}

In the noncollapsed Einstein context and along Ricci flows with bounded scalar curvature, orbifold singularities are the only possible singularities \cite{Simon, Bamler-Zhang}. In this context, it is known that these singularities must be related to Ricci-flat ALE metrics. Additionally, in the article \cite{deruelle-ozuch-2024}, large classes of ancient and immortal Ricci flows are constructed from the interactions of orbifold singularities and Ricci-flat ALE metrics. See also \cite{brendle-kapouleas} for a similar instance of orbifold singularities resolved along Ricci flow.

In the present article, we show that hyperkähler ALE metrics are the only \textit{spin} ancient Ricci flows with tangent soliton $\mathbb{R}^4/\Gamma$, $\Gamma\subset SU(2)$, decaying mildly at infinity. Similarly, we rule out the existence of \textit{spin} ALE expanding solitons with nontrivial group in $SU(2)$ at infinity, which are natural candidates to resolve orbifold singularities while preserving the spin condition. Similar statements hold in higher dimensions.

\subsubsection{Functionals on ALE manifolds.}

Our work is part of a broader effort of defining useful asymptotic quantities on noncompact manifolds, see for instance \cite{dahl-kroencke-mccormick, kroencke-yudowitz} in other contexts, and the discussion below on ALE manifolds. Theorem \ref{large-scale-behavior-of-mu} opens the question of whether these functionals can, in some sense, be traced back to Perelman's $\mu$-functional, which might explain their satisfying behavior.

An \textit{ALE manifold} is a complete manifold asymptotic to a flat metric $(\mathbb{R}^n/\Gamma,d_e)$ for $\Gamma\subset SO(n)$ finite acting freely on $\mathbb{S}^{n-1}$. It is of order $\beta>0$ if the metric decays to the flat Euclidean metric like $d_e(0,\cdot)^{-\beta}$, see Definition \ref{def:ALE}. The proof of our classification of spin ALE ancient Ricci flows relies on the behavior of specific functionals on such manifolds. 

The most notable quantity related to the large-scale properties of ALE manifolds is the ADM mass, a central quantity in general relativity. It will be denoted $\frak{m}$ here. Other quantities detecting large-scale properties of the manifold have recently been introduced, motivated by the study of Ricci flow. They have the advantage of being defined on larger classes of metrics than mass.
\begin{itemize}
    \item A renormalized version of Perelman's $\lambda$, which we will denote $\lambda_{\ALE}^0$, was introduced in \cite{haslhofer}.
    \item It was later refined as a functional $\lambda_{\ALE}$, which is analytic in useful spaces, and whose gradient flow is Ricci flow, \cite{deruelle-ozuch-2020}. It was used to study the stability of Ricci-flat ALE metrics in \cite{deruelle-ozuch-2021}.
    \item It was then noticed that $-\lambda_{\ALE}$ is a weighted version of the ADM mass $\frak{m}$ in \cite{baldauf2022spinors}, denoted $\frak{m}_f$ for a natural choice of $f$. The positivity of $\frak{m}_f$ on AE manifolds on which the classical positive mass theorem holds has recently been proven in \cite{baldauf2022spinors, chu-zhu, law-lopez-santiago}.
    \item Another weighted \textit{spinorial} quantity encompassing all of the above as well as the spinorial energy of \cite{Ammann-Weiss-Witt} was introduced in \cite{baldauf-ozuch-2}.
    \item Perelman's $\mu$-functional is also well-defined on ALE manifolds, and its large-scale asymptotics are analyzed in the present article.
\end{itemize}

\subsubsection{Stability, positive mass and uniqueness of Ricci flow}

Inspired by partial analogies with minimal surfaces and mean curvature flows, questions about Ricci flows around Ricci-flat cones can be found in \cite[Section 10]{Feldman-Ilmanen-Knopf} and in \cite{hall-haslhofer-siepmann}. In dimension $4$, Ricci-flat cones are necessarily of the form $\mathbb{R}^4/\Gamma$ for $\Gamma\subset SO(4)$ acting freely on $\mathbb{S}^3$ since positively curved Einstein $3$-manifolds are space forms. In this context, what is sometimes called \emph{Ilmanen's conjecture} predicts a relationship between:
\begin{enumerate}
    \item the ``stability'' of $\mathbb{R}^4/\Gamma$, here understood as being a local maximizer of the $\lambda_{\ALE}$-functional among ALE metrics with nonnegative scalar curvature and asymptotic cone $\mathbb{R}^4/\Gamma$,
    \item positive mass theorems for manifolds asymptotic to $\mathbb{R}^4/\Gamma$, and
    \item Ricci flows coming out of $\mathbb{R}^4/\Gamma$ their uniqueness. Expanding solitons have been proven to be the only possible Ricci flows coming out of cones under specific curvature assumptions in \cite{deruelle-schulze-simon, chan-lee-peachey}
\end{enumerate}
After the initial treatment of \cite{hall-haslhofer-siepmann}, the above relationships--especially between the first two points--have been uncovered in \cite{deruelle-ozuch-2020,baldauf2022spinors}. The cones $\mathbb{R}^4/\Gamma$ for $\Gamma\subset SU(2)$ (1) are $\lambda_{\ALE}$-stable, and (2) the positive mass theorem holds on them, \emph{if the topology is assumed to be spin and compatible with that of the asymptotic cone} $\mathbb{R}^4/\Gamma$, \cite{deruelle-ozuch-2020,baldauf2022spinors}. 
In the present article, we obtain an essentially complete answer regarding the last point as well. Theorems \ref{thm: exp ALE} and \ref{large-scale-behavior-of-mu} state that on $\lambda_{\ALE}$-stable fillings, e.g. the spin desingularizations of $\mathbb{R}^4/\Gamma$ with $\Gamma\subset SU(2)$, (3) there is no expanding Ricci flow coming out of $\mathbb{R}^4/\Gamma$. 

\begin{remark}
    The stability and the existence of Ricci flows is very much related to the topology of the manifold inside the Ricci-flat cone $\mathbb{R}^4/\Gamma$ for $\Gamma\subset SU(2)$--it cannot be solely read off from the cone. Indeed, while spin fillings are stable, the is not the case of other fillings. This can be seen from ALE metrics on $O(-k)$ for $k\geqslant 3$ considered with \emph{the opposite orientation}: they are asymptotic to $\mathbb{R}^4/\mathbb{Z}_k$ for $\mathbb{Z}_k\subset SU(2)$, but (1) they are $\lambda_{ALE}$-unstable by \cite{deruelle-ozuch-2020}, (2) the positive mass theorem does not hold on $O(-k)$ by \cite{lebrun}, and (3) there are expanding Ricci flows on $O(-k)$ coming out of $\mathbb{R}^4/\mathbb{Z}_k$ by \cite{Feldman-Ilmanen-Knopf}.
\end{remark}

\subsection{Main results}

In dimension $4$, manifolds which are not spin are classified up to homeomorphisms and this is not the case for spin $4$-manifolds. Spin $4$-manifolds are characterized by having an even intersection form. 

We classify expanding and ancient ALE Ricci flows under a topological assumption: they admit a spin structure compatible with their infinity and have specific groups at infinity. We will call these \textit{spin ALE} manifolds.

\subsubsection{Classification of spin ALE ancient Ricci flows}

Ancient Ricci flows model the formation of singularities along Ricci flow, making their classification crucial. Three-dimensional $\kappa$-noncollapsed ancient solutions in are classified obtained in \cite{Brendle2020,
Ang-Bre-Das-Ses,Bre-Das-Ses}. In higher dimensions, a similar classification can be found in \cite{Bre-Das-Naf-Ses} under stronger curvature assumptions.  
\\

By \cite{yu-li}, the ALE condition and the order $\beta>0$ are preserved by Ricci flow. This allows us to define an ALE Ricci flow of order $\beta$. We will call such a flow \textit{ancient} if it is defined on $( -\infty, T ] $ for $T\in \mathbb{R}$.

\begin{theorem} \label{thm: anc ALE}
    Suppose \((M^n,g_t)_{t\in( -\infty, T ]}\) is an ancient spin ALE Ricci flow of order \(\beta\) with group at infinity in $SU(2)$ and $\beta>\frac{4}{3}$ if $n=4$, or satisfying the assumptions of \cite[Theorem 5.1]{dahl} in other dimensions, and where either $3\leq n\leq 6$ and $\beta > \frac{n}{3}$ or $n\geq 7$ and $\beta > \frac{n-2}{2}$. 
    
    Then \((M^n,g)\) admits a parallel spinor, and is in particular Ricci-flat.
\end{theorem}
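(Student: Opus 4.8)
The plan is to combine a monotonicity argument for Perelman-type functionals along the ancient flow with a positive mass theorem on spin ALE manifolds. First, I would set up the renormalized $\lambda_{\ALE}$-functional and Perelman's $\mu$-functional on each time-slice $(M^n,g_t)$: since the ALE order $\beta>0$ is preserved by Ricci flow (by \cite{yu-li}), the uniform decay rate of the slices is controlled, and the assumption $\beta>\frac{n}{3}$ (resp. $\beta>\frac{n-2}{2}$ for $n\geq 7$) is precisely the threshold making the weighted mass $\frak{m}_f=-\lambda_{\ALE}$ well-defined and making Theorem \ref{large-scale-behavior-of-mu} applicable, so that the large-scale behavior of $\mu$ on a slice with nonnegative scalar curvature is controlled by $\lambda_{\ALE}$. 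The key monotonicity input is that $\lambda_{\ALE}$ (resp. $\mu$) is nondecreasing along the flow and is, on a given slice, bounded above by $0$ — with equality forcing rigidity — as a consequence of the spin positive mass theorem of \cite{baldauf2022spinors, law-lopez-santiago} applied to fillings of $\mathbb{R}^n/\Gamma$ with $\Gamma\subset SU(2)$ (or the hypotheses of \cite[Theorem 5.1]{dahl} in general dimension).

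The heart of the argument is then: the flow is \emph{ancient}, so it exists on $(-\infty,T]$. Monotonicity of $\lambda_{\ALE}$ along the flow, combined with the uniform upper bound $\lambda_{\ALE}(g_t)\leq 0$ coming from positive mass, shows that $t\mapsto\lambda_{\ALE}(g_t)$ is nondecreasing and bounded, hence converges as $t\to-\infty$; moreover its derivative, which controls a weighted $L^2$-norm of $\Ric+\Hess f_t$ (the soliton quantity), must be integrable in $t$ and therefore have a subsequence tending to $0$. Taking a blow-down/rescaled limit as $t\to-\infty$ produces a gradient expanding or steady soliton (the tangent soliton at $-\infty$), which by the noncollapsed/ALE structure is $\mathbb{R}^n/\Gamma$ itself. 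But then the monotone quantity $\lambda_{\ALE}(g_t)$ has the \emph{same} limiting value as $\lambda_{\ALE}$ of the flat cone, which is $0$; since $\lambda_{\ALE}$ is nondecreasing and already $\leq 0$, it must be identically $0$ along the whole flow. The equality case of the spin positive mass theorem then forces each slice $(M^n,g_t)$ to carry a parallel spinor, hence to be Ricci-flat; since Ricci-flat metrics are fixed points of the flow, the flow is static and $(M^n,g)$ itself admits a parallel spinor.

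In carrying this out I would: (i) quote Theorem \ref{large-scale-behavior-of-mu} to pass between $\mu$ and $\lambda_{\ALE}$, so that nonnegativity of scalar curvature along the flow (which I need to check is propagated — this follows from the ALE maximum principle and the decay hypotheses, since $\Scal\geq 0$ is preserved and the slices are asymptotically flat) gives the comparison; (ii) invoke the monotonicity formula for $\lambda_{\ALE}$ from \cite{deruelle-ozuch-2020}, whose vanishing derivative characterizes Ricci-flat ALE metrics; (iii) invoke the spin positive mass theorem for $\frak{m}_f$ (equivalently $-\lambda_{\ALE}\geq 0$) on spin fillings with group in $SU(2)$ from \cite{baldauf2022spinors, law-lopez-santiago}, or the Witten-type argument under the hypotheses of \cite[Theorem 5.1]{dahl}, together with its rigidity statement (equality $\Rightarrow$ parallel spinor); and (iv) handle the backward-limit extraction, using $\kappa$-noncollapsing of ALE flows and the order-$\beta$ control to identify the tangent soliton at $-\infty$ with the flat orbifold and hence pin the limiting value of $\lambda_{\ALE}$ to $0$.

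The main obstacle I expect is step (iv): making the backward limit argument rigorous, i.e. showing that the monotone quantity $\lambda_{\ALE}(g_t)$ actually converges to the value $0$ of the flat cone as $t\to-\infty$ rather than to some strictly negative number. This requires controlling the geometry at all scales uniformly in $t$ — one needs that the ALE order $\beta$ does not degenerate backward in time and that no mass "escapes to infinity" in the limit, which is where the preservation of the order from \cite{yu-li} and the analyticity/continuity properties of $\lambda_{\ALE}$ in weighted spaces from \cite{deruelle-ozuch-2020, deruelle-ozuch-2021} are essential. A secondary subtlety is ensuring $\Scal(g_t)\geq 0$ along the \emph{entire} ancient flow: on a noncompact manifold the weak maximum principle needs the ALE decay to rule out boundary effects at infinity, and one must verify the relevant version applies for the orders $\beta$ in the hypothesis.
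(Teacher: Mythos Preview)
Your outline has the right ingredients --- the spin positive mass inequality $\lambda_{\ALE}\leq 0$ with rigidity, and some monotone quantity along the flow --- but the core mechanism you propose does not close, and the paper's argument is structurally different.

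The gap is precisely the one you flag in step (iv), but it is more fatal than you suggest. You want to conclude $\lambda_{\ALE}(g_t)\to 0$ as $t\to-\infty$ from the fact that the tangent soliton is the flat cone. However, the tangent soliton is obtained by \emph{parabolic rescaling}, and $\lambda_{\ALE}$ is not scale-invariant: $\lambda_{\ALE}(s\,g)=s^{\frac{n}{2}-1}\lambda_{\ALE}(g)$. So convergence of the rescaled flow to $\mathbb{R}^n/\Gamma$ only tells you that $|t|^{1-\frac{n}{2}}\lambda_{\ALE}(g_t)\to 0$, which is vacuous since $|t|^{1-\frac{n}{2}}\to 0$ regardless. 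There is no way, from the blow-down alone, to rule out that $\lambda_{\ALE}(g_t)$ is a strictly negative constant for all $t$. Monotonicity of $\lambda_{\ALE}$ together with the upper bound $0$ thus gives you nothing.

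The paper circumvents this by working with the \emph{scale-invariant} quantity $\nu(g)=\inf_\tau \mu(g,\tau)$ instead of $\lambda_{\ALE}$. The comparison with the backward limit is then supplied by Proposition~\ref{prop: ineq nu ancient}: via Bamler's continuity of the pointed Nash entropy under $\mathbb{F}$-convergence and the results of Chan--Ma--Zhang, one gets $\nu(g_t)\geq \nu(\mathbb{R}^n/\Gamma)$ directly, without any ad hoc control of the ALE geometry as $t\to-\infty$. Theorem~\ref{large-scale-behavior-of-mu} is then not a technical side lemma but the \emph{bridge} of the whole proof: it converts the strict inequality $\lambda_{\ALE}(g_t)<0$ (from the spin positive mass theorem when $g_t$ has no parallel spinor) into $\mu(g_t,\tau)<\mu(\mathbb{R}^n/\Gamma)$ for large $\tau$, hence $\nu(g_t)<\nu(\mathbb{R}^n/\Gamma)$, contradicting the previous inequality. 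In your write-up Theorem~\ref{large-scale-behavior-of-mu} appears only as background in step (i); in the actual proof it is the pivot that transfers information from the non-scale-invariant $\lambda_{\ALE}$ (where positive mass lives) to the scale-invariant $\nu$ (where the ancient comparison lives).
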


In dimension $4$, this means that any spin ALE ancient Ricci flow with group at infinity in $SU(2)$, and of order $>\frac{4}{3}$, must be hyperkähler. Note that given a group $\Gamma\subset SU(2)$, the spin assumption still allows infinitely many diffeotypes (e.g. by connected sum with arbitrarily many $\mathbb{S}^2\times \mathbb{S}^2$), while being hyperkähler only allows one.

\begin{example}
    Any ancient Ricci flow ALE of order $>\frac{4}{3}$ with topology $O(-2)=T^*\mathbb{S}^2$ must be homothetic to the static Eguchi-Hanson metric, which is ALE of order $4$. 
    
    Similarly, any ancient ALE Ricci flow diffeomorphic to a minimal resolution of $\mathbb{C}^2/\Gamma$ and of order $>\frac{4}{3}$ must be one of the hyperkähler metrics classified in \cite{kronheimer-hyperkahler-quotients, Kronheimer-Torelli-theorem}, which are of order at least $4$.

    Theorem \ref{thm: anc ALE} also applies to $M^n$ equipped with an ALE Calabi-Yau metric such as the Calabi metric \cite{calabi-1}, or the examples of \cite{joyce} and \cite{tian-yau}. The corresponding ALE Calabi-Yau metrics are of order at least $n$.
\end{example}

\begin{remark}
    Towards understanding Ricci flow on spin $4$-manifolds, Theorem \ref{thm: anc ALE}, up to a minor decay assumption, states that if a Ricci flow develops or resolves a singularity $\mathbb{R}^4/\Gamma$ for $\Gamma\subset SU(2)$, it must bubble-off a hyperkähler metric.
\end{remark}

\subsubsection{Classification of spin ALE expanding solitons} 

In order to restart a Ricci flow at a finite-time singularity, it has been proposed to use expanding solitons in the case of conical singularities (such as orbifold singularities). For instance, in \cite{Feldman-Ilmanen-Knopf}, a Ricci flow composed of a shrinking soliton up to a singular time, then desingularized by an expanding soliton, is presented. See also \cite{Gianniotis-Schulze} for other resolutions of singularities using expanding solitons, \cite{deruelle-2016} for large classes of expanding solitons, and \cite{conlon-deruelle, conlon-deruelle-sun} for constructions and classifications of expanding solitons in the Kähler case.
\\

Expanding solitons are often considered to be abundant, flexible, and to exist in large families on a variety of topologies. We instead obtain strong restrictions for them on spin ALE manifolds. 

\begin{theorem} \label{thm: exp ALE}
    Suppose \((M^n,g)\) is a spin ALE expanding soliton and with group at infinity in $SU(2)$ if $n=4$, or satisfying the assumptions of \cite[Theorem 5.1]{dahl} in other dimensions.
    
    Then \((M^n,g)\) is a flat, Gaussian expanding soliton.
\end{theorem}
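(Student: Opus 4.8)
The plan is to reduce the expanding-soliton case to the ancient-flow case of Theorem \ref{thm: anc ALE}, or more directly to the monotonicity of Perelman's $\mu$-functional controlled by $\lambda_{\ALE}$ (Theorem \ref{large-scale-behavior-of-mu}). An expanding soliton $(M^n,g,f)$ satisfies $\Ric + \Hess f = -\tfrac{1}{2}g$ (with the appropriate normalization), and it generates a self-similar Ricci flow $(M^n,g_t)_{t\in(0,\infty)}$ by scaling and pulling back along the flow of $\nabla f$. The first step is to record that, since the soliton is ALE of some order $\beta>0$ (which by \cite{yu-li} is preserved and hence well-defined), the associated expanding flow is an ALE Ricci flow of order $\beta$ on $(0,\infty)$; in particular it is an \emph{immortal}, rather than ancient, flow, so Theorem \ref{thm: anc ALE} does not apply verbatim and one must argue with the forward behavior as $t\to\infty$ instead.

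The key step is the large-scale/asymptotic analysis of $\mu$ along this self-similar flow. Because the flow is self-similar, Perelman's $\mu(g_t,\tau)$ is essentially constant in $t$ up to the explicit scaling, and is computed by the soliton potential $f$ itself: one gets $\mu$ expressed through $\int_M (R + |\nabla f|^2 + f - n)\,e^{-f}\,dV$, the Gaussian-normalized soliton entropy. The point is to compare this quantity with the renormalized $\lambda_{\ALE}$-type functional. Since the soliton equation forces the scalar curvature to satisfy $R + |\nabla f|^2 - f = \mathrm{const}$ (the standard soliton identity), together with $R \geq$ its infimum going to zero at infinity one should be able to show $R\geq 0$ everywhere (or deduce it from the maximum principle for the soliton-modified Laplacian, $\Delta_f R = -R - 2|\Ric|^2$ on expanders, which has the right sign to give $R\geq 0$ by the strong maximum principle after checking the behavior at infinity using the ALE order). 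With $R\geq 0$ in hand, Theorem \ref{large-scale-behavior-of-mu} applies: the large-scale behavior of $\mu$ is governed by $-\frak{m}_f = \lambda_{\ALE}$, and the spin + group-at-infinity-in-$SU(2)$ hypothesis makes the weighted positive mass theorem of \cite{baldauf2022spinors, law-lopez-santiago} applicable, forcing $\lambda_{\ALE}\geq 0$ with equality iff there is a parallel spinor.

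The final step is the rigidity: combining the monotonicity/sign of $\mu$ along the expanding flow with the sign of $\lambda_{\ALE}$ from the positive mass theorem pinches both quantities, forcing equality in the positive mass theorem and hence a parallel spinor on $(M^n,g)$. A manifold carrying a parallel spinor is Ricci-flat; but a Ricci-flat expanding soliton satisfies $\Hess f = -\tfrac12 g$, which on a complete manifold forces $(M^n,g)$ to be the flat Gaussian expander $(\mathbb{R}^n, g_{\euc})$ with $f = \tfrac14|x|^2$ (the standard rigidity for Ricci-flat gradient expanders, e.g. by analyzing the level sets of $f$ and using completeness/ALE to rule out quotients — here the group at infinity must then be trivial).

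I expect the main obstacle to be the second step: rigorously transferring the asymptotic expansion of $\mu$ to the weighted mass $\frak{m}_f$ in the self-similar setting and justifying the decay/integrability needed to invoke Theorem \ref{large-scale-behavior-of-mu}, since the potential $f$ grows quadratically and one must carefully track the interaction between this growth, the ALE decay of order $\beta$, and the weight $e^{-f}$ — in particular verifying that the relevant boundary terms at infinity converge to the mass and that no anomalous contribution from the non-compact "expanding" directions appears. Establishing $R\geq 0$ with the correct behavior at infinity (rather than merely $R$ bounded below) is the other delicate point, as the naive maximum principle argument needs the ALE order to be large enough, which is presumably where the dimensional hypotheses on $\beta$ implicitly enter through \cite{dahl} and \cite[Theorem 5.1]{dahl}.
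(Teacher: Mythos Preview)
Your proposal has the right architecture --- derive $\mu(g,\tau)<\mu(\mathbb{R}^n/\Gamma)$ for large $\tau$ from Theorem \ref{large-scale-behavior-of-mu} and contradict an opposite inequality --- but there are two concrete gaps. First, the sign of $\lambda_{\ALE}$ is reversed: the weighted positive mass theorem yields $\mathfrak{m}_f\geq 0$, i.e.\ $\lambda_{\ALE}(g)\leq 0$ (Proposition \ref{prop: sign lambda spin}), not $\geq 0$; with your sign, \eqref{eq:expansion mu} gives no useful upper bound on $\mu$ and your ``pinching'' collapses. Second, and more seriously, you never actually establish the lower bound $\nu(g)\geq \nu(\mathbb{R}^n/\Gamma)$. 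Your appeal to ``monotonicity of $\mu$ along the expanding flow'' does not supply it: the self-similar flow is immortal and comes \emph{out of} the cone as $t\to 0^+$, so Perelman monotonicity runs in the wrong direction for the comparison you want, and the soliton identity $R+|\nabla f|^2 - f = \mathrm{const}$ does not by itself produce a comparison of $\nu(g)$ with $\nu$ of the cone. The paper imports this inequality as a black box from \cite[Proposition 4.4]{Bamler-Chen} (stated here as Proposition \ref{prop:nu exp}); with that in hand the contradiction is immediate: non-flat $\Rightarrow \lambda_{\ALE}<0 \Rightarrow \nu(g)<\nu(\mathbb{R}^n/\Gamma)$, contradicting Bamler--Chen.

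Your identified ``main obstacle'' --- verifying enough decay to invoke Theorem \ref{large-scale-behavior-of-mu} --- is in fact the easy part and is handled differently than you anticipate: since the asymptotic cone is flat, the argument of \cite[Theorem 3.3.1]{siepmann-thesis} applied to $|\Rm|$ gives exponential curvature decay, and then \cite{Bando-Kasue-Nakajima} furnishes ALE coordinates of arbitrarily high order, so $\beta>\max(\tfrac{n-2}{2},\tfrac{n}{3})$ is automatic with no extra hypothesis in the statement. One minor correction at the end: a Ricci-flat gradient expander is indeed flat, but this does not force $\Gamma$ to be trivial --- the Gaussian expander exists on every $\mathbb{R}^n/\Gamma$, which is exactly what the conclusion of the theorem allows.
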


\begin{example}
    This shows that there cannot be any ALE expanding soliton on $O(-2)=T^*\mathbb{S}^2$, on the minimal resolution of any $\mathbb{C}^2/\Gamma$ for $\Gamma\subset SU(2)$, or on $M^n$ equipped with an ALE Calabi-Yau metric. 
\end{example}

By contrast, there exist ALE expanding solitons of infinite order on $O(-k)$ for $k\geqslant 3$, \cite{Feldman-Ilmanen-Knopf}; however, the groups at infinity are in $U(2)\backslash SU(2)$, hence Theorem \ref{thm: exp ALE} does not apply. By \cite{deruelle-ozuch-2020}, $\lambda_{\ALE}(g)>0$ for such metrics, and they have larger $\nu$-functional than their asymptotic cone.

\begin{remark}
     Theorem \ref{thm: exp ALE} shows that it is impossible to resolve a $\mathbb{R}^4/\Gamma$ singularity with $\Gamma\subset SU(2)$ by an expanding soliton while preserving the spin condition. The examples of \cite{Feldman-Ilmanen-Knopf} show that it is possible for specific subgroups of $U(2)\backslash SU(2)$.
\end{remark}

\subsubsection{Large-scale behavior of Perelman's $\mu$-functional on ALE manifolds}

Ancient and expanding ALE Ricci flows can be seen as Ricci flows whose ``initial data'' is a flat cone $\mathbb{R}^n/\Gamma$. By the monotonicity of Perelman's $\mu$-functional, this should intuitively imply that the $\mu$-functional of said flows is \textit{larger} than that of $\mathbb{R}^n/\Gamma$. This is made rigorous in Propositions \ref{prop:nu exp} and \ref{prop: ineq nu ancient} below. 

Our strategy to prove Theorems \ref{thm: anc ALE}  and Theorem \ref{thm: exp ALE} is to reach a contradiction by estimating the $\mu$-functional on ancient or expanding ALE Ricci flows satisfying our topological assumptions.
\\

Recall that on a compact manifold $(M,g)$, one has the asymptotic expansion
$$ \mu(g,\tau) = \tau \lambda(g) + o(\tau) $$
as $\tau\to\infty$. That is, the large-scale behavior of Perelman's $\mu$-functional is dictated by the $\lambda$-functional.
\\

Our classification of spin ALE Ricci flows relies on the following theorem which makes explicit the large-scale behavior of Perelman's \(\mu\)-functional on ALE manifolds. It is this time controlled by the $\lambda_{\ALE}$-functional introduced in \cite{deruelle-ozuch-2020}.

\begin{theorem} \label{large-scale-behavior-of-mu}
    Suppose \((M^n,g)\) is an ALE manifold of order \(\beta\) asymptotic to $\mathbb{R}^n/\Gamma$, where $3\leq n\leq 6$ and $\beta > \frac{n}{3}$ or $n\geq 7$ and $\beta > \frac{n-2}{2}$, and that $\scal_g\geqslant 0$. Then
    \begin{equation}\label{eq:expansion mu}
        \mu(g,\tau)\, \leq \,\mu(\mathbb{R}^n/\Gamma) + \frac{\tau}{(4\pi \tau)^{\frac{n}{2}}}|\Gamma| \cdot\lambda_{\mathrm{ALE}}(g) + O(\tau^{\gamma}),
    \end{equation}
    where \(\gamma<1-\frac{n}{2}\) is as in Lemma \ref{noncompact-asymptotics-for-arbitrary-ALE}.
    
    In particular, if
    \begin{itemize}
        \item $g$ has positive scalar curvature,
        \item $\Gamma\subset SU(2)$ if $n=4$, or satisfying the assumptions of \cite[Theorem 5.1]{dahl} if $n>4$, and
        \item $M^n$ has a spin structure compatible with the spin structure of $\mathbb{R}^n/\Gamma$ in the sense of the assumptions of \cite[Theorem 5.1]{dahl},
    \end{itemize}
    then $\lambda_{\mathrm{ALE}}(g) < 0$, and by \eqref{eq:expansion mu}, for sufficiently large \(\tau\), we have 
    \begin{equation}
        \mu(g,\tau) < \mu(\mathbb{R}^n/\Gamma).
    \end{equation}
\end{theorem}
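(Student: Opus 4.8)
The plan is to bound $\mu(g,\tau)$ from above by evaluating Perelman's $\W$-functional on an explicit, nearly optimal test function and then to read off the leading correction. Let $u$ be the distinguished positive solution of $-4\Delta_g u+\scal_g u=0$ with $u\to 1$ at infinity; since $\scal_g\geqslant 0$ one has $0<u\leqslant 1$, and $u$ is precisely the minimizer through which $\lambda_{\ALE}(g)$ is defined in \cite{deruelle-ozuch-2020}. In an ALE chart at infinity with radial coordinate $\rho$ (extended to a smooth positive function on $M$), set $G_\tau=(4\pi\tau)^{-n/4}e^{-\rho^2/8\tau}$ and $\phi_\tau=c_\tau\,u\,G_\tau$, with $c_\tau>0$ fixed by $\int_M\phi_\tau^2\,dV_g=1$; then $f_\tau:=-\log\big((4\pi\tau)^{n/2}\phi_\tau^2\big)$ is admissible and $\mu(g,\tau)\leqslant\W(g,f_\tau,\tau)$. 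This test function is modeled on the exact $\W$-minimizer $\propto e^{-\rho^2/8\tau}$ of the flat cone $(\mathbb{R}^n/\Gamma,d_e)$, which yields $\mu(\mathbb{R}^n/\Gamma)=-\log|\Gamma|$, the correction factor $u$ being inserted so that the scalar curvature will be detected through $\lambda_{\ALE}(g)$ rather than through $\int_M\scal_g$.

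\textbf{The key cancellation.} In the form $\W(g,\phi,\tau)=\int_M\big[\tau(\scal_g\phi^2+4|\nabla\phi|^2)-\phi^2\log\phi^2\big]\,dV_g-\tfrac n2\log(4\pi\tau)-n$ valid for $\int_M\phi^2=1$, I would expand $\scal_g\phi_\tau^2+4|\nabla\phi_\tau|^2$ using $\nabla\phi_\tau=c_\tau(G_\tau\nabla u+u\nabla G_\tau)$. Integrating by parts against $u\,G_\tau^2$ over $M$ and invoking $-4\Delta_g u+\scal_g u=0$ makes the terms carrying $\nabla u$ and $\scal_g$ telescope, producing the exact identity
\[
\int_M\big(\scal_g\phi_\tau^2+4|\nabla\phi_\tau|^2\big)\,dV_g\;=\;4c_\tau^2\int_M u^2\,|\nabla G_\tau|^2\,dV_g
\]
(no boundary terms arise). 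Thus every slowly-decaying contribution of $\scal_g$ is absorbed, which is essential since $\scal_g$ need not be integrable in the range $\beta\leqslant n-2$. The entropy term $-\int\phi_\tau^2\log\phi_\tau^2$ decomposes into the normalization $-\log c_\tau^2$, the Gaussian moment $\tfrac1{4\tau}\int\phi_\tau^2\rho^2\,dV_g$, and $-\int\phi_\tau^2\log u^2\,dV_g$.

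\textbf{The expansion and the main obstacle.} It then remains to expand each of the resulting integrals as $\tau\to\infty$. By scale invariance $\mu(g,\tau)=\mu(\tau^{-1}g,1)$, and $(M,\tau^{-1}g)$ converges to the flat cone $(\mathbb{R}^n/\Gamma,d_e)$ in weighted spaces at a rate set by the order $\beta$; to leading order each integral is therefore the corresponding Euclidean Gaussian integral over $\mathbb{R}^n/\Gamma$, and these reassemble into $\mu(\mathbb{R}^n/\Gamma)=-\log|\Gamma|$. The first correction is produced by $u-1$ and by $g-g_{\euc}$; collecting it and integrating by parts once more, it should match—via the boundary-integral (weighted mass) characterization of $\lambda_{\ALE}$ from \cite{deruelle-ozuch-2020,baldauf2022spinors}—the term $\frac{\tau}{(4\pi\tau)^{n/2}}|\Gamma|\,\lambda_{\ALE}(g)$, of size $\tau^{1-\frac n2}$. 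The hard part will be to show that everything else is $O(\tau^{\gamma})$ with $\gamma<1-\frac n2$: when $\beta$ is close to $\frac n3$ (resp. $\frac{n-2}2$) both $u-1$ and $g-g_{\euc}$ decay slowly, so the individual error integrals are a priori only $O(\tau^{-\beta/2})$, and one must exploit the equation for $u$ together with the cancellation above to see that these dangerous terms either cancel or are subsumed into a genuinely lower-order remainder; this is exactly the content of Lemma \ref{noncompact-asymptotics-for-arbitrary-ALE}, and it is where the hypotheses on $\beta$ are used. A secondary subtlety is to check that the surviving $\tau^{1-\frac n2}$-coefficient is $\lambda_{\ALE}(g)$ and not some other renormalized invariant, which again rests on the definition of $\lambda_{\ALE}$ and its mass interpretation.

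\textbf{Conclusion of the second assertion.} Granting the expansion \eqref{eq:expansion mu}, under the stated hypotheses—$M$ spin with spin structure compatible with that of $\mathbb{R}^n/\Gamma$, $\Gamma\subset SU(2)$ if $n=4$ or satisfying the assumptions of \cite[Theorem 5.1]{dahl} if $n>4$, and $\scal_g\geqslant 0$—the weighted positive mass theorem (\cite{dahl} together with its spinorial refinements \cite{baldauf2022spinors,chu-zhu,law-lopez-santiago}) gives $-\lambda_{\ALE}(g)=\frak{m}_f(g)\geqslant 0$, with equality only if $(M,g)$ carries a parallel spinor. If moreover $g$ has positive scalar curvature, then $(M,g)$ is not Ricci-flat, hence admits no parallel spinor, so the inequality is strict: $\lambda_{\ALE}(g)<0$. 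Since $\gamma<1-\frac n2$, the negative term $\frac{\tau}{(4\pi\tau)^{n/2}}|\Gamma|\,\lambda_{\ALE}(g)=\frac{|\Gamma|}{(4\pi)^{n/2}}\lambda_{\ALE}(g)\,\tau^{1-\frac n2}$ dominates the $O(\tau^{\gamma})$ remainder as $\tau\to\infty$, so \eqref{eq:expansion mu} forces $\mu(g,\tau)<\mu(\mathbb{R}^n/\Gamma)$ for all sufficiently large $\tau$.
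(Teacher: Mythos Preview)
Your integration-by-parts identity $\int_M(\scal_g\phi_\tau^2+4|\nabla\phi_\tau|^2)\,dV_g=4c_\tau^2\int_M u^2|\nabla G_\tau|^2\,dV_g$ is correct and pleasant, but the subsequent expansion has a real gap. After this cancellation you still have to expand $\int u^2|\nabla G_\tau|^2\,dV_g$, $\int u^2G_\tau^2\,dV_g$ and $\int u^2G_\tau^2\log u^2\,dV_g$ as $\tau\to\infty$, and each carries metric corrections: both $|\nabla\rho|_g^2-1$ and $dV_g/dV_e-1=\tfrac12\tr_e h+O(\rho^{-2\beta})$ are $O(\rho^{-\beta})$, and against the Gaussian weight they contribute terms of size $\tau^{-\beta/2}$ to $\W(\phi_\tau)$. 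For these to be $o(\tau^{1-n/2})$ you would need $\beta>n-2$, which is strictly stronger than the stated hypotheses (e.g.\ $\beta>\tfrac43$ when $n=4$). Concretely, at first order in $\tr_e h$ one finds that $\W(\phi_\tau)+\log|\Gamma|$ contains the term $|\Gamma|\int(\tfrac12\tr_e h)\,G_\tau^2\big[\tfrac{\rho^2}{2\tau}+1-n\big]\,dV_e$, which is $O(\tau^{-\beta/2})$ and does not recombine with any other term into $\lambda_{\ALE}(g)\cdot\tau^{1-n/2}$. Your appeal to Lemma~\ref{noncompact-asymptotics-for-arbitrary-ALE} is misplaced: that lemma is proved for a different test function in a special gauge and does not transfer to the product $u\,G_\tau$.

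The paper's argument avoids this obstacle by two devices absent from your sketch. First, it works in a \emph{radial gauge} at infinity, $h(\partial_r,\cdot)=0$, so that $|\nabla\rho|_g\equiv 1$ and several divergence/boundary terms vanish identically. Second, on the exterior $\{r\geqslant 3\tau^\varepsilon\}$ the test function is not $u\,G_\tau$ but $e^{-f/2}$ with $f$ determined by $e^{-f}dV_g=e^{-\rho^2/4\tau}dV_e$, i.e.\ $f=\tfrac{\rho^2}{4\tau}+\tfrac12\tr_e h+O(\rho^{-2\beta})$; this choice absorbs the dangerous $O(\rho^{-\beta})$ volume-form error into $f$ at the outset. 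A second-variation analysis in this gauge (Proposition~\ref{noncompact-asymptotics-with-radial-gauge}) then shows the exterior contribution is $O(\tau^{1-n/2}R^{\,n-\min(2\beta+2,\,3\beta)})$, which is where $\beta>\tfrac{n}{3}$ enters. On the interior $B_{3\tau^\varepsilon}$ the test function equals $u_\infty$ itself, and $\lambda_{\ALE}(g)$ emerges from $\int_{B_{3\tau^\varepsilon}}(4|\nabla u_\infty|^2+\scal_g u_\infty^2)\,dV_g$ paired with the boundary term $\int_{S_{3\tau^\varepsilon}}\langle\nabla^e\tr_e h,\nu\rangle\,dA_e$ via \eqref{eq:lambda-ALE-DO}, not from a large-$\tau$ expansion of Gaussian moments. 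Your derivation of the second assertion from \eqref{eq:expansion mu} and the sign of $\lambda_{\ALE}$ is correct.
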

\begin{remark}
    The terms in \eqref{eq:expansion mu} are all (parabolically) scale-invariant: for any $s>0$, $\mu(s\,g,s\,\tau) = \mu(g,\tau)$, $\mu(s\,\mathbb{R}^n/\Gamma) = \mu(\mathbb{R}^n/\Gamma)$, and $\lambda_{\ALE}(s\,g) = s^{\frac{n}{2}-1}\lambda_{\ALE}(g)$, see \cite[Proposition 3.12]{deruelle-ozuch-2020}.
\end{remark}

\begin{remark}
    As proven in \cite[Theorem 6.2]{dahl}, in dimension $4$, the assumption on the compatibility of the spin structure of the manifold can actually be replaced by the value of the signature of the manifold. 
\end{remark}

\begin{remark}
    The first part of Theorem \ref{large-scale-behavior-of-mu} also has an extension to manifolds without nonnegative scalar curvature, see Remark \ref{arbitrary-ALE-metric-remark}; however, this is not needed to prove Theorems \ref{thm: anc ALE} and \ref{thm: exp ALE}.
\end{remark}

It is important to mention that the conclusion that \(\mu(g,\tau) < \mu(\mathbb{R}^n/\Gamma)\) for manifolds satisfying the above conditions depends on the fact that \(\lambda_{\mathrm{ALE}}(g)<0\) for such manifolds. In \cite[Theorem 3.4]{yu-li}, Li proves that \(\mu(g,\tau)\r 0\) as \(\tau \r +\infty\), so Theorem \ref{large-scale-behavior-of-mu} can be viewed as a quantitative version of Li's result when applied to AE manifolds. In the ALE case with $\Gamma\neq \{\operatorname{Id}\}$, since $\mu(\mathbb{R}^n/\Gamma)< 0$, even proving that $\liminf_{\tau\to +\infty}\mu(g,\tau) \leqslant \mu(\mathbb{R}^n/\Gamma)$ is not that simple.
\\

The proof of Theorem \ref{large-scale-behavior-of-mu} relies on a careful construction of a test function for Perelman's $\mathcal{W}$-functional. The argument is delicate and requires the use of a very specific \textit{radial} gauge at infinity for ALE metrics from \cite{x-ray-transform}. The negativity of $\lambda_{\ALE}$ on spin ALE manifolds with suitable group at infinity is an extension of an argument of \cite{deruelle-ozuch-2020} relying on the ALE positive mass theorems of \cite{nakajima,dahl}.

\subsection{Organization of the article} In Section \ref{background-on-geometric-functionals}, we briefly review some relevant facts pertaining to Ricci flow on AE manifolds proved in \cite{yu-li} as well as the renormalized \(\lambda\)-functional \(\lambda_{\mathrm{ALE}}\) introduced in \cite{deruelle-ozuch-2020}. In Section \ref{large-scale-behavior}, we prove Theorem \ref{large-scale-behavior-of-mu}. In Section \ref{classification-of-ALE-expanding-solitons-and-ancient-flows}, we prove Theorems \ref{thm: anc ALE} and \ref{thm: exp ALE}. We discuss and introduce other dynamical quantities in Section \ref{sec:further} and conclude with some open questions.

\subsection{Notation and conventions}
We will freely use the following notation and conventions throughout the article.
\begin{itemize}
    \item $dV_g$ denotes the volume form with respect to a given Riemannian metric $g$, and $dA_g$ denotes the volume form of a given sphere with outward unit normal $\nu$, i.e. $dA_g = \iota_\nu dV_g$.
    \item $g_e$ denotes the standard Euclidean metric, and $\nabla^e=\nabla^{g_e}$, $dV_e=dV_{g_e}$, etc.
    \item We define $\Gamma_\tau:=\exp(-r(x)^2/(8\tau))$, where $r(x)$ is defined in \S\ref{ALE-review}.
    \item We will write $x \lesssim y$ to mean that $x\leq cy$ for some constant $c>0$ which is independent of $r(x)=|x|$ and $\tau$. More precisely, $c$ will only depend on the constants $C_k$ in Definition \ref{def:ALE}. We will also write $x=O(y)$ to mean that $|x|\lesssim y$.
\end{itemize}

\subsection{Acknowledgments}

The authors would like to thank Zilu Ma for explaining the argument of Proposition \ref{prop: ineq nu ancient} to them, and Alix Deruelle for suggesting the references used in Proposition \ref{prop:nu exp}. I.M.L. was partially supported by the MIT Undergraduate Research Opportunities Program (UROP), and was mentored by T.O.

\section{Background on geometric functionals on ALE manifolds} \label{background-on-geometric-functionals}

\subsection{ALE manifolds, mass, and Ricci flows} \label{ALE-review}
Let us first recall the definition of an ALE manifold.

\begin{definition}[ALE manifold]\label{def:ALE}
    A Riemannian manifold \((M^n,g)\) is \emph{asymptotically locally Euclidean (ALE)} of order \(\beta > 0\) if there exists a compact set \(K \subseteq M\), a radius \(R>1\), a subgroup \(\Gamma\) of \(SO(n)\) acting freely on \(\mathbb{S}^{n-1}\) and a diffeomorphism \(\Phi: (\R^n/\Gamma)\setminus B_e(0,R) \mapsto M\setminus K=M_\infty\) such that, if we denote by \(g_e\) the Euclidean metric on \(\R^n/\Gamma\), then for each \(k\in \{0,1,2,3\}\), there exists $C_k>0$ such that
    \begin{equation}
        \rho^k|\nabla^{g_e,k}(\Phi^*g - g_e)|_e = C_k\rho^{-\beta}
    \end{equation}
    on \((\R^n/\Gamma)\setminus B_e(0,R)\), where \(\rho = d_e(\cdot,0)\).
\end{definition}
The degree of regularity is chosen for convenience, it could easily be refined in most of our estimates. Our main applications will be to very smooth objects.

We fix a smooth positive function \(r(x)\geqslant 1\) on \(M\) such that \(r(x)=|\Phi(x)|\) when \(x\in M_\infty\). We also identify \(x\in M_\infty\) with \(\Phi(x)\in \R^n/\Gamma\) without explicitly referring to \(\Phi\), and we denote by $B_{r_0}$ the set of $x\in M$ such that $r(x)\leq r_0$, and by $S_{r_0}$ the set of $x\in M$ such that $r(x)=r_0$.

The notion of an ALE manifold is a more general one than that of an AE (asymptotically Euclidean) manifold. Ricci flow on AE manifolds is well described in \cite{yu-li} and its results extend to the ALE case. A principal goal of this article is to better understand Ricci flow on the larger class of ALE manifolds.

The mass of an ALE metric is well-defined on the classical space $\mathcal{M}_\beta$ of ALE spaces of order $\beta>\frac{n-2}{2}$ with integrable scalar curvature, of \cite{lee1987yamabe} and \cite{bartnik1986mass}, on which it can be written as
\begin{equation}
    \mathfrak{m}(g) = \lim_{R \r \infty} \int_{\{r=R\}} \langle \div_{e}(g-g_e) - \nabla^{e} \tr_{e}(g-g_e), \nu \rangle_{e} dA_{e}.
\end{equation}
Outside this space, however, there is no guarantee that the mass is defined, and the same goes for $\int \scal_g dV_g$. Such subtleties present significant challenges in the proof of Theorem \ref{large-scale-behavior-of-mu}, as described in \S\ref{large-scale-behavior}.

\begin{remark}
    The classical positive mass theorem \cite{schoen1979PMT1,witten1981new} does not hold for ALE spaces \cite{lebrun}, even when restricted to the class of ALE expanding solitons decaying exponentially fast at infinity, \cite{Feldman-Ilmanen-Knopf}.
\end{remark}

An important fact about Ricci flow on ALE manifolds is that \emph{the mass remains constant along the flow.} This is proven in \cite[Theorem 2.2]{yu-li} in the AE case, but the proof directly extends to ALE metrics since an ALE end is covered by a $\Gamma$-invariant AE end.

\begin{theorem}[\cite{yu-li}, Theorem 2.2, see also \cite{dai-ma}]
    Suppose $(M^n,g(t))$ is a solution to the Ricci flow with bounded curvature on $M\times [0,T]$ and $(M,g(0))$ is ALE of order $\beta >0$. Then
    \begin{enumerate}
        \item The ALE condition is preserved with the same ALE coordinates and order.
        \item If $\beta > \frac{n-2}{2}$ and $\scal_g \in L^1(dV_g)$, then the mass remains constant under the flow. 
    \end{enumerate}
\end{theorem}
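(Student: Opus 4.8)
The plan is to reduce the whole statement to the asymptotically Euclidean case of \cite{yu-li} and then recall its two halves. First I would observe that Yu Li's estimates are of two kinds: those on a fixed compact region, which are insensitive to the ALE versus AE distinction, and those on the noncompact end, where an ALE end is, $\Gamma$-periodically, isometrically covered by an AE end — pulling the chart $\Phi$ of Definition~\ref{def:ALE} back along $\mathbb{R}^n\setminus B_e(0,R)\to(\mathbb{R}^n/\Gamma)\setminus B_e(0,R)$ turns $\Phi^*g(t)$ into a $\Gamma$-invariant AE Ricci flow with bounded curvature. All the objects in the statement — the decay conditions of Definition~\ref{def:ALE}, the curvature bound, the defining flux integral of $\mathfrak{m}$, and $\int\scal_g\,dV_g$ — are either supported on the compact part or computed on the end, where they lift $\Gamma$-invariantly (the mass and $\int\scal_g\,dV_g$ on the end simply getting multiplied by $|\Gamma|$). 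So it suffices to prove both assertions for $\Gamma=\{\mathrm{Id}\}$.

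\emph{Preservation of the ALE structure.} In the fixed chart write $h(t)=\Phi^*g(t)-g_e$ on $\mathbb{R}^n\setminus B_e(0,R)$. Along the flow $h$ solves a quasilinear heat-type system, schematically $\partial_t h=\Delta_{g(t)}h+\nabla h\ast\nabla h$, with no zeroth-order term because $g_e$ is flat. The bounded-curvature hypothesis, the $C^3$-closeness to $g_e$ at $t=0$, and Shi's local estimates keep $g(t)$ uniformly equivalent to $g_e$ on the end with uniformly bounded higher curvature. The decay is then propagated by a weighted parabolic maximum principle: $|h|_e$ is a subsolution of a heat operator for which $\rho^{-\beta}$ (up to a slowly varying correction) serves as a supersolution barrier, since $\Delta_e\rho^{-\beta}=O(\rho^{-\beta-2})$ is of lower order; this gives $|h(t)|_e=O(\rho^{-\beta})$, and the derivative bounds $|\nabla^{e,k}h(t)|_e=O(\rho^{-\beta-k})$, $k=1,2,3$, follow from rescaled interior parabolic estimates on the dyadic annuli $\{\rho\sim 2^j\}$. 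Running this as a continuity argument in $t$ shows the ALE order and the chart are preserved.

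\emph{Constancy of the mass.} When $\beta>\frac{n-2}{2}$ and $\scal_g\in L^1$ (a condition whose propagation along the flow is itself part of the argument), the mass is defined and chart-independent \cite{bartnik1986mass}; I would differentiate its defining flux integral using $\partial_t g=-2\Ric_{g(t)}$,
\[ \frac{d}{dt}\mathfrak{m}(g(t))=-2\lim_{R\to\infty}\int_{S_R}\langle\div_e\Ric-\nabla^e\tr_e\Ric,\ \nu\rangle_e\,dA_e, \]
the exchange of $\tfrac{d}{dt}$ with $\lim_R$ being justified by the $t$-uniform decay from the previous step and a cutoff argument. Passing from Euclidean to metric operators, the contracted second Bianchi identity gives $\div_e\Ric=\tfrac12\,d\scal_g+O(\rho^{-2\beta-3})$ and $\nabla^e\tr_e\Ric=d\scal_g+O(\rho^{-2\beta-3})$, and since $\beta>\frac{n-2}{2}$ the error fluxes over $S_R$ are $O(R^{n-4-2\beta})\to0$ (and $|\Ric|^2,\scal_g^2=O(\rho^{-2\beta-4})$ are integrable). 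Thus $\frac{d}{dt}\mathfrak{m}(g(t))=\lim_{R\to\infty}\int_{S_R}\partial_\nu\scal_g\,dA_e$, and everything reduces to showing this residual boundary flux vanishes.

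\emph{The hard part.} Showing $\lim_{R\to\infty}\int_{S_R}\partial_\nu\scal_g\,dA_e=0$ is the genuinely delicate step — exactly the subtlety the paper flags before the statement, namely that outside $\mathcal{M}_\beta$ neither $\mathfrak{m}$ nor $\int\scal_g\,dV_g$ is automatically well-controlled. The pointwise bound $|\scal_g|=O(\rho^{-\beta-2})$ from the first step does not by itself kill the flux when $\beta$ is close to $\frac{n-2}{2}$, so one has to use $\scal_g\in L^1$ in an integrated way: propagate an $L^1$-bound (and suitable decay for each $t>0$) on $\scal_{g(t)}$ along the flow — it satisfies $\partial_t\scal=\Delta_g\scal+2|\Ric|^2$, with nonnegative reaction term, so the maximum principle and parabolic smoothing apply — and use this to force the boundary flux to zero. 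This is the analytic core of \cite{yu-li} (and \cite{dai-ma}); the ALE-specific input is merely the covering reduction above, and the weighted parabolic bootstrap of the first step is by now routine. Granting it, $\frac{d}{dt}\mathfrak{m}(g(t))\equiv 0$ on $[0,T]$.
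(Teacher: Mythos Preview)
Your proposal is correct and matches the paper's approach exactly: the paper does not prove this theorem at all but merely cites \cite[Theorem~2.2]{yu-li} for the AE case and remarks in one sentence that ``the proof directly extends to ALE metrics since an ALE end is covered by a $\Gamma$-invariant AE end'' --- precisely your first paragraph. Your remaining paragraphs go further than the paper by sketching the actual content of \cite{yu-li} (weighted barrier for the decay, differentiation of the mass flux via Bianchi, and the delicate $L^1$-propagation of $\scal$), which the paper leaves entirely to the reference.
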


This lets us define a reasonable notion of ALE Ricci flow. 
\begin{definition}\label{def:ALErf}
    Let $(M,g_t)_{t\in I}$ for $I\subset \mathbb{R}$ an interval. Then, we say that $(M,g_t)_t$ is an ALE Ricci flow of order $\beta>0$ if for every $t\in I$, $(M,g_t)$ is an ALE manifold of order $\beta$ in the sense of Definition \eqref{def:ALE}. 
\end{definition}

Note that the constants $C_k(t)$ in Definition \ref{def:ALE}, and the radius $R(t)$ are not supposed to be controlled. Still, \cite{yu-li} ensures that the resulting Ricci flow has \textit{bounded curvature within each compact time interval} as defined in \eqref{eq:bdd curv finite time}, and that the order $\beta$ is indeed preserved. 

\emph{Weighted Hölder spaces} appear frequently in asymptotic geometry, and some of their properties will be helpful in some of the proofs.

\begin{definition}[Weighted Hölder space]
Let $(M^n,g)$ be an ALE manifold with asymptotic coordinates $x$ on $M_\infty$, and recall the definition of $r(x)$ from Definition \ref{def:ALE}. For $0 < \alpha < 1$, $k \in \mathbb{N}_0$ and $\beta \in \R$, the \emph{weighted H\"older space} $\C^{k,\alpha}_{\beta}$ is the space of $\C^k$ functions $u: M \to \R$ for which the norm

    \begin{equation}
        \lVert u \rVert_{\C^{k,\alpha}_\beta(M)} := \sum_{0 \leq i \leq k} \left( \sup_{x \in M_\infty} \frac{|\nabla^i u(x)|}{r^{\beta-i}} \right) +
        \sup_{x \in M_\infty} \frac{[\nabla^k u]_{\mathcal{C}^\alpha(B_{r/2}(x))}}{r^{\beta-(k+\alpha)}}
    \end{equation}
    is finite, where $B_{r/2}(x)$ is the metric ball of radius $\frac{r}{2}$ centered at $x$ and
    \begin{align}
        [\nabla^k u]_{\mathcal{C}^\alpha(B_{r/2}(x))} &:= \sup_{y,z \in B_{\frac{r}{2}}(x)} \frac{|\nabla^k u(y) - \nabla^k u(z)|}{|y-z|^\alpha}.
    \end{align}
\end{definition}
Note that if $u \in \C^{k,\alpha}_\beta(M)$, then $u = O(r^\beta)$ as $r \to \infty$. Below are some useful properties of weighted Hölder spaces, which are straightforward to check using the definition.
\begin{enumerate}
    \item If $u \in \C^{k,\alpha}_\beta(M)$ and $v\in \C^{k,\alpha}_{\beta'}(M)$, then $u+v\in \C^{k,\alpha}_{\max(\beta,\beta')}(M)$ and $uv \in \C^{k,\alpha}_{\beta+\beta'}(M)$.
    \item If $u\in \C^{k,\alpha}_\beta(M)$, then $\nabla^j u \in \C^{k-j,\alpha}_{\beta-j}(M)$.
\end{enumerate}

\subsection{$\lambda_{\ALE}$ functional}
\begin{definition}[A first renormalized Perelman's functional, \cite{haslhofer}]
    Let $(M^n,g)$ be an ALE manifold. The \emph{$\mathcal{F}_{\ALE}$-energy} is defined as
    \begin{equation}
        \mathcal{F}_{\ALE}(u,g) := \int_M (4|\nabla u|^2 + \scal_g u^2)dV_g,
    \end{equation}
    where $u\in \C^\infty(M)$, $u-1=O(r^{-\beta})$, and $|\nabla u| \in L^2(dV_g)$. The \emph{$\lambda^0_{\ALE}$-functional} is
    \begin{equation}
        \lambda^0_{\ALE}(g):= \inf\{\mathcal{F}_{\ALE}(u,g) \st u\in \C^\infty(M), u-1=O(r^{-\beta}), |\nabla u| \in L^2(dV_g)\}.
    \end{equation}
\end{definition}
\begin{remark}
    This quantity is only finite on manifolds with integrable scalar curvature.
\end{remark}

There is always a positive minimizer of $\mathcal{F}_{\ALE}$ for ALE manifolds of non-negative scalar curvature, as guaranteed by the following proposition. The existence of such a minimizer is crucial for our large-scale estimates for the $\mu$-functional.

\begin{proposition}[\cite{deruelle-ozuch-2020}, Proposition 1.12; \cite{baldauf2022spinors}, Theorem 2.17; \cite{haslhofer}, Theorem 2.6] \label{energy-minimizers-for-positive-scalar-curvature}
    Let $(M^n,g)$ be an ALE manifold with non-negative scalar curvature, asymptotic to $\R^n/\Gamma$ for some finite subgroup $\Gamma$ of $SO(n)$ acting freely on $\mathbb{S}^{n-1}$. Let $\beta \in \left(\frac{n-2}{2},n-2\right)$ and $\alpha \in (0,1)$. Then $\lambda^0_{\ALE}(g) = \mathcal{F}_{\ALE}(u_\infty,g)$ (which might be infinite), where $u_\infty$ is the unique, positive solution to the equation
    \begin{equation}
        \begin{cases}
            -4\Delta_g u_\infty + \scal_g u_\infty = 0 \\ u_\infty - 1 \in \C^{2,\alpha}_{-\beta}(M).
        \end{cases}
    \end{equation}
    Moreover, there exists $c>0$ such that $c\leqslant u_\infty\leq 1$. 

\end{proposition}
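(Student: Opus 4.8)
The plan is to recast the problem as a linear elliptic equation on the ALE end, solve it by weighted Fredholm theory, and then read off the pointwise bounds and the variational characterization almost for free.

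\textbf{Reduction and existence.} Writing $u_\infty = 1 + v$, the equation $-4\Delta_g u_\infty + \scal_g u_\infty = 0$ becomes
\[
(-4\Delta_g + \scal_g)\,v = -\scal_g ,
\]
with $v$ to be found in $\C^{2,\alpha}_{-\beta}(M)$. Since $(M,g)$ is ALE of order $\beta>0$, the scalar curvature --- which involves at most two derivatives of the metric --- lies in $\C^{0,\alpha}_{-\beta-2}(M)$, exactly the target of $-4\Delta_g \colon \C^{2,\alpha}_{-\beta}(M) \to \C^{0,\alpha}_{-\beta-2}(M)$. I would then invoke the standard weighted elliptic theory on ALE manifolds (Lockhart--McOwen / Bartnik): for a weight $\delta$ non-exceptional for $\Delta$ on $\R^n/\Gamma$, $\Delta_g \colon \C^{2,\alpha}_{\delta} \to \C^{0,\alpha}_{\delta-2}$ is Fredholm, and for $\delta \in (2-n,0)$ it has index $0$. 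The hypothesis $\tfrac{n-2}{2} < \beta < n-2$ is precisely what places $\delta = -\beta$ strictly inside this window, away from the exceptional values $0$ and $2-n$. Since $\scal_g$ decays faster than quadratically, $P := -4\Delta_g + \scal_g$ has the same indicial roots as $\Delta_g$ and differs from it by a lower-order (compact) perturbation, so it too is Fredholm of index $0$. Finally $\ker P = \{0\}$ in $\C^{2,\alpha}_{-\beta}$: if $Pw = 0$ with $w$ decaying, then $\int_M (4|\nabla w|^2 + \scal_g w^2)\,dV_g = 0$, the boundary term from integration by parts being $O(R^{n-2-2\beta}) \to 0$ exactly because $\beta > \tfrac{n-2}{2}$; as $\scal_g \geq 0$ this forces $\nabla w \equiv 0$, hence $w\equiv 0$. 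Thus $P$ is an isomorphism, producing a unique $v$, and $u_\infty := 1+v$ is smooth by elliptic regularity and admissible for $\mathcal{F}_{\ALE}$ (the bound $|\nabla u_\infty| = O(r^{-\beta-1})$ is square-integrable, again because $\beta > \tfrac{n-2}{2}$).

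\textbf{Positivity and the bounds $c \leq u_\infty \leq 1$.} Rewrite the equation as $Lu_\infty = 0$ with $L := 4\Delta_g - \scal_g$, whose zeroth-order coefficient $-\scal_g$ is $\leq 0$. Since $u_\infty \to 1$ at infinity, if $\sup_M u_\infty > 1$ it is attained at an interior point, a positive maximum, so the strong maximum principle forces $u_\infty$ constant, contradicting $u_\infty\to 1$ unless $\scal_g\equiv 0$ and $u_\infty\equiv 1$; hence $u_\infty \leq 1$. Symmetrically, a supersolution cannot attain a non-positive interior minimum unless constant, so $u_\infty > 0$ everywhere; being continuous, positive, and convergent to $1$ at infinity, $u_\infty$ is bounded below by some $c>0$.

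\textbf{Variational characterization.} For any admissible competitor $u$, set $\phi := u - u_\infty$, so $\phi = O(r^{-\beta})$ and $|\nabla\phi| \in L^2(dV_g)$. Expanding the square,
\[
\mathcal{F}_{\ALE}(u,g) = \mathcal{F}_{\ALE}(u_\infty,g) + 2\int_M \big(4\,\nabla u_\infty\cdot\nabla\phi + \scal_g u_\infty \phi\big)\,dV_g + \int_M \big(4|\nabla\phi|^2 + \scal_g\phi^2\big)\,dV_g .
\]
Integrating the middle term by parts --- the boundary integral $\int_{S_R} 4\phi\,\partial_\nu u_\infty\,dA$ is again $O(R^{n-2-2\beta}) \to 0$ --- and using $4\Delta_g u_\infty = \scal_g u_\infty$, that term vanishes. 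Since $\scal_g \geq 0$, we obtain $\mathcal{F}_{\ALE}(u,g) \geq \mathcal{F}_{\ALE}(u_\infty,g)$; as $u_\infty$ is itself admissible, $\lambda^0_{\ALE}(g) = \mathcal{F}_{\ALE}(u_\infty,g)$. All integrals in the cross term are absolutely convergent (using $\beta > \tfrac{n-2}{2}$ and $\scal_g = O(r^{-\beta-2})$), so the identity, hence the conclusion, persists verbatim when $\mathcal{F}_{\ALE}(u_\infty,g) = +\infty$, which happens precisely when $\scal_g \notin L^1(dV_g)$.

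\textbf{Main obstacle.} The crux is the weighted isomorphism statement for $-4\Delta_g + \scal_g$: one must identify the exceptional weights of $\Delta$ on $\R^n/\Gamma$ and check $-\beta$ avoids them, know the index vanishes on $(2-n,0)$ --- which is exactly where both inequalities $\tfrac{n-2}{2}<\beta<n-2$ are used --- and control $\scal_g$ as a lower-order perturbation. Once $u_\infty$ is in hand, the maximum-principle bounds and the variational identity are routine; the only recurring subtlety is that every integration by parts on the ALE end requires $\beta > \tfrac{n-2}{2}$ to discard boundary terms.
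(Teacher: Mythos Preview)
Your proof is correct and follows the standard approach. Note that the paper itself does not prove this proposition but merely quotes it from the references \cite{deruelle-ozuch-2020}, \cite{baldauf2022spinors}, and \cite{haslhofer}; your argument---weighted Fredholm theory for $-4\Delta_g + \scal_g$ on $\C^{2,\alpha}_{-\beta}$ to produce $u_\infty$, the strong maximum principle for the bounds $c\leq u_\infty\leq 1$, and the quadratic expansion of $\mathcal{F}_{\ALE}$ about $u_\infty$ for the variational identity---is essentially the proof given in those sources.

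One small clarification on your ``Main obstacle'' paragraph: the upper bound $\beta<n-2$ is what keeps $-\beta$ above the indicial root $2-n$ (hence in the Fredholm window of index $0$), while the lower bound $\beta>\tfrac{n-2}{2}$ is not needed for the Fredholm index itself but rather for the injectivity argument and the variational expansion, where it kills the boundary terms $O(R^{n-2-2\beta})$ in the integrations by parts. You use both correctly; the roles are just slightly different from how you summarize them.
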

One issue with Haslhofer's \(\lambda\)-functional is that it is \emph{not} continuous or bounded in $\C^{2,\alpha}_{-\beta}$, i.e. if a sequence of metrics \(g_n\) converges to a metric \(g\) in \(\C^{2,\alpha}_{-\beta}\), there is no guarantee that $\lambda^0_{\ALE}(g_n) \r \lambda^0_{\ALE}(g)$; see, for instance, \cite[Example 3.1]{deruelle-ozuch-2020}. This motivates the definition of the $\lambda_{\ALE}$-functional, which remedies this issue by subtracting from $\lambda^0_{\ALE}(g)$ the mass of the metric.

\begin{definition}[$\lambda_{\ALE}$, \cite{haslhofer}, \cite{deruelle-ozuch-2020}]
    Let $(M^n,g)$ be an ALE metric with $\scal_g\geqslant 0$ and $\scal_g\in L^1(dV_g) $ for $\beta > \frac{n-2}{2}$. We define
    \begin{equation}
        \lambda_{\ALE}(g):=\lambda^0_{\ALE}(g) - \mathfrak{m}(g).
    \end{equation}
\end{definition}
Although $\lambda_{\ALE}$ is originally defined with $\scal_g\in L^1(dV_g) $, by \cite[Theorem 2.17]{baldauf2022spinors}, it extends analytically to ALE metrics of order $\beta>\frac{n-2}{2}$ and $\scal_g\geqslant 0$. By \cite[Proposition 3.4]{deruelle-ozuch-2020}, $\lambda_{\ALE}$ can also be written as
\begin{equation}\label{eq:defn lambdaALE general}
    \lambda_{\ALE}(g) = \lim_{R\to\infty} \left(\int_{\{r\leqslant R\}} (4|\nabla u_\infty|^2 + \scal_g\, u_\infty^2) dV_g - \int_{\{r=R\}}\left\langle \div_e(h)-\nabla^e \tr_e(h),\nu \right\rangle_e dA_g\right)
\end{equation}
if $g=e+h$ close to infinity for $|\nabla^k_eh| \leqslant C_k r^{-\beta-k}$ for $C_k>0$. The proof of \cite[Proposition 3.4]{deruelle-ozuch-2020} also shows that there is continuity of the right-hand-side of \eqref{eq:defn lambdaALE general} at $R\to\infty$, namely
\begin{equation} \label{eq:lambda-ALE-DO}
    \int_{\{r\leqslant R\}} (4|\nabla u_\infty|^2 + \scal_g\, u_\infty^2) dV_g - \int_{\{r=R\}}\left\langle \div_e(h)-\nabla^e \tr_e(h),\nu \right\rangle_e dA_g = \lambda_{\ALE}(g) + O(R^{n-2\beta-2}).
\end{equation}
The proof also shows that if $g_n \r g$ in $\C^{2,\alpha}_{-\beta}$, then $\lambda_{\ALE}(g_n) \r \lambda_{\ALE}(g)$.

A crucial property of $\lambda_{\ALE}$ that is used to prove Theorem \ref{large-scale-behavior-of-mu} is that it is negative on spin ALE manifolds with suitable groups at infinity and positive scalar curvature. It is stated in \cite{deruelle-ozuch-2020} in dimension $4$, but using \cite{dahl} in place of \cite{nakajima} yields the following result in all dimensions.

\begin{proposition}\label{prop: sign lambda spin}
    Let $(M^n,g)$ be an ALE manifold of order $\beta>\frac{n-2}{2}$ with group at infinity satisfying the assumptions of \cite[Theorem 5.1]{dahl}. If $\scal_g \geqslant 0$, then 
    \begin{equation}\label{eq:lambda negative}
        \lambda_{\ALE}(g)\leqslant 0,
    \end{equation}
    with equality if and only if $g$ admits a parallel spinor.
\end{proposition}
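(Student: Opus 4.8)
\emph{Proof proposal.} The plan is to deduce the inequality \eqref{eq:lambda negative} from the \emph{classical} ALE positive mass theorem \cite[Theorem 5.1]{dahl} by conformally rescaling $g$ to a scalar-flat metric, extending the dimension-four argument of \cite{deruelle-ozuch-2020}. Assume first that $\scal_g\in L^1(dV_g)$, so that $\mathfrak{m}(g)$ is defined and $\lambda_{\ALE}(g)=\lambda^0_{\ALE}(g)-\mathfrak{m}(g)$. Since $\scal_g\geq 0$, I would first run the argument of Proposition \ref{energy-minimizers-for-positive-scalar-curvature} with the coefficient $4$ replaced by $c_n:=\tfrac{4(n-1)}{n-2}$ to produce a unique positive solution $\phi$ of $-c_n\Delta_g\phi+\scal_g\phi=0$ with $\phi-1$ decaying at infinity faster than $r^{-(n-2)/2}$ and with $0<\phi\leq 1$ on $M$ (the upper bound because $\Delta_g\phi=c_n^{-1}\scal_g\phi\geq 0$ makes $\phi$ $g$-subharmonic with $\phi\to 1$, and $M$ is connected). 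Using $\phi$ as a competitor in $\mathcal{F}_{\ALE}$, integrating by parts against its equation, and using Green's identity $\int_{S_R}\partial_\nu\phi\,dA_g=c_n^{-1}\int_{B_R}\scal_g\phi\,dV_g$ (all boundary error terms at radius $R$ tending to $0$ by the decay of $\phi-1$), one finds
\begin{equation}
\mathcal{F}_{\ALE}(\phi,g)=\int_M\scal_g\,\phi\Bigl[\bigl(1-\tfrac{4}{c_n}\bigr)\phi+\tfrac{4}{c_n}\Bigr]\,dV_g\ \leq\ \int_M\scal_g\,\phi\,dV_g,
\end{equation}
where the inequality uses $c_n>4$, $0<\phi\leq 1$ and $\scal_g\phi\geq 0$. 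On the other hand, $\bar g:=\phi^{4/(n-2)}g$ is scalar-flat, ALE of order $>\tfrac{n-2}{2}$, with the same asymptotic cone and compatible spin structure as $g$; the conformal change of mass formula, with the same Green's identity, gives $\mathfrak{m}(\bar g)=\mathfrak{m}(g)-\int_M\scal_g\,\phi\,dV_g$. Applying \cite[Theorem 5.1]{dahl} to $\bar g$ (legitimate since $\scal_{\bar g}\equiv 0\in L^1$) gives $\mathfrak{m}(\bar g)\geq 0$, hence $\int_M\scal_g\phi\,dV_g\leq\mathfrak{m}(g)$; combined with $\lambda^0_{\ALE}(g)\leq\mathcal{F}_{\ALE}(\phi,g)$ this yields $\lambda^0_{\ALE}(g)\leq\mathfrak{m}(g)$, i.e. $\lambda_{\ALE}(g)\leq 0$.

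For the equality case, suppose $\lambda_{\ALE}(g)=0$, so that all the inequalities above are equalities. Equality of $\mathcal{F}_{\ALE}(\phi,g)$ with $\int_M\scal_g\phi\,dV_g$ forces, since $1-\tfrac{4}{c_n}>0$, the pointwise identity $\scal_g\,\phi\,(1-\phi)\equiv 0$; thus $\phi\equiv 1$ on the open set $U:=\{\scal_g>0\}$, which makes $\Delta_g\phi\equiv 0$ on $U$ and hence $\scal_g\equiv 0$ on $U$ by the equation for $\phi$ --- a contradiction unless $U=\emptyset$. So $\scal_g\equiv 0$, $\phi$ is $g$-harmonic with $\phi\to 1$, hence $\phi\equiv 1$ and $\bar g=g$; moreover $\mathfrak{m}(g)=\mathfrak{m}(\bar g)=0$, so the rigidity clause of \cite[Theorem 5.1]{dahl} yields a parallel spinor on $(M,g)$, which is then Ricci-flat. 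Conversely, a parallel spinor forces $g$ Ricci-flat, hence $\scal_g\equiv 0$, $\phi\equiv 1$, $\lambda^0_{\ALE}(g)=0$, and $\mathfrak{m}(g)=0$ by the equality clause of \cite[Theorem 5.1]{dahl}; so $\lambda_{\ALE}(g)=0$.

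Finally, when $\tfrac{n-2}{2}<\beta\leq n-2$ and $\scal_g\notin L^1(dV_g)$, the individual terms $\mathfrak{m}(g)$ and $\int_M\scal_g\phi\,dV_g$ may diverge while their regularized difference stays finite; here I would rerun the argument with all integrals truncated at radius $R$, replacing $\lambda^0_{\ALE}(g)-\mathfrak{m}(g)$ by the renormalized expression \eqref{eq:defn lambdaALE general} (whose boundary term is the divergence integral of $h=g-g_e$), and let $R\to\infty$ so the divergences cancel --- or, alternatively, invoke the continuity of $\lambda_{\ALE}$ on $\C^{2,\alpha}_{-\beta}$ together with the $L^1$ case already settled. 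I expect the main obstacles to be precisely (a) making this truncated bookkeeping rigorous, so that ``$\mathfrak{m}(g)-\int_M\scal_g\phi\,dV_g=\mathfrak{m}(\bar g)$'' becomes a genuine identity between convergent limits, and (b) the rigidity step, which hinges on the sharp equality clause of \cite[Theorem 5.1]{dahl} and on the observation that equality forces the conformal factor $\phi$ to be identically $1$.
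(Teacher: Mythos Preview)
Your argument is correct and is precisely the strategy the paper points to. The paper does not write out a proof of this proposition; it simply remarks that the result is proved in \cite{deruelle-ozuch-2020} in dimension~$4$ and that replacing Nakajima's ALE positive mass theorem by \cite[Theorem~5.1]{dahl} extends it to all dimensions. Your conformal-rescaling approach --- solving $-c_n\Delta_g\phi+\scal_g\phi=0$ with $\phi\to 1$, comparing $\mathcal{F}_{\ALE}(\phi,g)$ to $\int_M\scal_g\phi$ via the inequality $(1-\tfrac{4}{c_n})\phi+\tfrac{4}{c_n}\leq 1$, and then invoking Dahl's theorem on the scalar-flat metric $\bar g=\phi^{4/(n-2)}g$ through the identity $\mathfrak{m}(\bar g)=\mathfrak{m}(g)-\int_M\scal_g\phi$ --- is exactly the content of that cited argument, carried out in general dimension. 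The equality analysis (forcing $\scal_g\equiv 0$, hence $\phi\equiv 1$ and $\mathfrak{m}(g)=0$, then appealing to the rigidity clause of \cite{dahl}) is also correct.

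One minor remark on the non-$L^1$ extension: of your two suggested routes, the continuity of $\lambda_{\ALE}$ in $\C^{2,\alpha}_{-\beta}$ is delicate here because one must approximate while preserving $\scal\geq 0$. The cleaner observation is that the conformally related metric $\bar g$ always has $\scal_{\bar g}\equiv 0\in L^1$, so $\mathfrak{m}(\bar g)$ is unambiguously defined regardless of whether $\scal_g\in L^1$; running your chain of inequalities at finite radius $R$ against the renormalized expression \eqref{eq:defn lambdaALE general} and letting $R\to\infty$ then goes through directly, since the divergent pieces of $\mathfrak{m}(g)$ and $\int_{B_R}\scal_g\phi$ cancel in $\mathfrak{m}(\bar g)$.
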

This theorem applies to any spin ALE $4$-manifold $M^4$ with group $SU(2)$ at infinity, our main application. 

\section{Large-scale behavior of Perelman's $\mu$-functional on ALE manifolds} \label{large-scale-behavior}
 Throughout this section, we let \((M^n,g)\) denote an ALE manifold of order \(\beta \in \left(\frac{n-2}{2},n-2\right)\) whose end \(M_\infty\) is diffeomorphic to \((\mathbb{R}^n/\Gamma) \setminus B_{R_0}\). We also assume that $\scal_g\geq 0$. We recall that Perelman's entropy functional and $\mu$-functional are defined as
\begin{equation}
    \mathcal{W}(u,g,\tau) = \int_M \left[\tau(4|\nabla u|^2 + \scal_g u^2) - u^2\log(u^2) - nu^2\right]dV_g,
\end{equation}
\begin{equation}
    \mu(g,\tau) = \inf\left\{\mathcal{W}(u,g,\tau) \st u \in W^{1,2}_0(M) \quad \mathrm{ and } \quad ||u||^2_{L^2(dV_g)}= (4\pi \tau)^{\frac{n}{2}}\right\}.
\end{equation}
This is well-defined on ALE metrics by \cite{ozuch-cones}, and we note that $\mu(\R^n/\Gamma):=\mu(\R^n/\Gamma,\tau)=-\log(|\Gamma|)<0$ for all $\tau>0$. Indeed, the Gaussian $\Gamma_\tau$ satisfies $\int_{\R^n/\Gamma} \Gamma_\tau^2 dV_e=\frac{(4\pi\tau)^{\frac{n}{2}}}{|\Gamma|}$, so $\int_{\R^n/\Gamma} |\Gamma|\Gamma_\tau^2dV_e = (4\pi\tau)^{\frac{n}{2}}$ and
\begin{equation}
    \mu(\R^n/\Gamma,\tau)=\mathcal{W}(\Gamma_\tau^2,g_e,\tau) - |\Gamma|\log(|\Gamma|)\int_{\R^n/\Gamma}\Gamma_\tau^2 (4\pi \tau)^{-\frac{n}{2}}dV_e = -\log(|\Gamma|).
\end{equation}

\begin{definition}
    We define $\mu_{\ALE}$ as
\begin{equation} \label{eq:mu-ALE-defn}
    \begin{aligned}
        \mu_{\ALE}(g,\tau) := \inf\left\{\mathcal{W}(u,g,\tau) \st u\in W^{1,2}_0(M) \quad \mathrm{ and } \quad||u||^2_{L^2(dV_g)} = \frac{(4\pi \tau)^{\frac{n}{2}}}{|\Gamma|} \right\}.
    \end{aligned}
\end{equation}
\end{definition}

For instance, if $(M,g) = (\R^n/\Gamma,g_e)$, then the compatibility condition implies
\begin{equation}
    \int_{\R^n} e^{-f}dV_g = |\Gamma|\int_{\R^n/\Gamma} e^{-f}dV_g = (4\pi \tau)^{\frac{n}{2}}.
\end{equation}
Observe that we can also write $\mu_{\ALE}$ is
\begin{equation} \label{eq:mu-ALE-alternate}
    \mu_{\ALE}(g,\tau) = \frac{1}{|\Gamma|}[\mu(g,\tau)-\mu(\R^n/\Gamma)].
\end{equation}
Indeed, if $\mu(g,\tau)=\mathcal{W}(e^{-f},g,\tau)$, then
\begin{equation}
    \mu_{\ALE}(g,\tau) = \mathcal{W}(e^{-f-\log(|\Gamma|)},g,\tau) = \frac{1}{|\Gamma|}[\mu(g,\tau)-\log(1/|\Gamma|)].
\end{equation}
For convenience of notation, we shall define $\alpha_\tau:= \frac{(4\pi \tau)^{\frac{n}{2}}}{|\Gamma|}$ to be the normalizing constant in the definition of $\mu_{\ALE}$ given in \eqref{eq:mu-ALE-defn}.

\subsection{Perelman's \(\mu\)-functional on approximately Euclidean metrics}

In this subsection, we first prove Theorem \ref{large-scale-behavior-of-mu} in the case of \emph{approximately Euclidean metrics} (as defined below), and in \S\ref{perelmans-mu-on-general-ALE}, we adapt the proof to the case of a general ALE metric.

\begin{definition}[Approximately Euclidean metrics]
    For any \(R>R_0\), we define the associated \emph{approximately Euclidean metric} \(g_R\) by
    \begin{equation}
    g_R = \chi_R g + (1-\chi_R)g_e,
    \end{equation}
    where \(0\leq \chi_R \leq 1\) is a cutoff function supported in \(B_{2R}\) which is identically \(1\) on \(B_R\) and satisfies \(|\chi|,|\nabla \chi| \leq \frac{C}{R}\) for some $C>0$.
\end{definition}
We note that \((M^n,g_R)\) is ALE of order \(\beta\) with the same ALE chart on \(M_\infty\) as \((M^n,g)\). By definition, \(\scal_{g_R}=\scal_g\) on \(B_R\) and \(\scal_{g_R}=\scal_e=0\) on \(M\setminus B_{2R}\). In this subsection, we will simply write $\nabla=\nabla^{g_R}$ and $\langle,\rangle=\langle,\rangle_{g_R}$.

In this subsection, we prove the following result, which controls $\mu_{\ALE}(g_R,\tau)$ for large values of \(\tau\) by $\lambda_{ALE}(g_R)$.

\begin{theorem} \label{mu-ALE-is-negative-for-gR}
    Suppose \((M^n,g)\) is an ALE manifold of order \(\beta>\frac{n-2}{2}\). Then 
    \begin{equation}\label{eq: asympt mu gR}
        \mu(g_R,\tau) \leq \mu(\R^n/\Gamma)+\frac{\tau}{(4\pi \tau)^{\frac{n}{2}}}|\Gamma|\cdot \lambda_{\ALE}(g_R) + O(\tau^{\alpha})
    \end{equation}
for \(\tau\) sufficiently large (depending on $R$), where $\alpha<1-\frac{n}{2}$ is as in Lemma \ref{cutoff-asymptotics}.
\end{theorem}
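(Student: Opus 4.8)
The plan is to produce an explicit test function for the $\W$-functional on $(M^n,g_R)$ of the form $u = c_\tau\, \Gamma_\tau\, u_\infty^{1/2}$, or more precisely a cutoff modification of it, where $u_\infty$ is the positive minimizer from Proposition \ref{energy-minimizers-for-positive-scalar-curvature} (which exists since $\scal_{g_R}\geqslant 0$ and agrees with $\scal_g$ on $B_R$, hence $\scal_{g_R}\in L^1$), $\Gamma_\tau = \exp(-r^2/(8\tau))$ is the rescaled Gaussian, and $c_\tau$ is the normalization constant forcing $\|u\|_{L^2(dV_{g_R})}^2 = (4\pi\tau)^{n/2}/|\Gamma|=\alpha_\tau$. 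Then $\mu(g_R,\tau)\leqslant |\Gamma|\,\mu_{\ALE}(g_R,\tau) + \mu(\R^n/\Gamma)\leqslant |\Gamma|\,\W(u^2,g_R,\tau) + \mu(\R^n/\Gamma)$ by \eqref{eq:mu-ALE-alternate}, so it suffices to expand $\W(u^2,g_R,\tau)$ to order $o(\tau^{1-n/2})$ and identify the $\tau^{1-n/2}$-coefficient with $\lambda_{\ALE}(g_R)/(4\pi)^{n/2}$.

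First I would compute, on the exact cone $(\R^n/\Gamma, g_e)$, that the Gaussian test function $\Gamma_\tau^2/\|\Gamma_\tau\|^2$ gives exactly $\mu(\R^n/\Gamma) = -\log|\Gamma|$; this is already recalled in the excerpt. The point of the theorem is the correction coming from $g_R - g_e = \chi_R(g-g_e)$ and from the weight $u_\infty$. So the main computation is a perturbative expansion: write $\W(u^2,g_R,\tau)$, substitute $u^2 = c_\tau^2\,\Gamma_\tau^2\,u_\infty$, and split the integral into the region $B_{\tau^\varepsilon}$ (for a small $\varepsilon>0$ to be chosen, with $\tau^\varepsilon \ll R$ eventually, or rather $\tau^\varepsilon$ in the ALE region but inside $B_R$) and its complement. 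On $M\setminus B_{\tau^\varepsilon}$ the Gaussian weight $\Gamma_\tau^2 = e^{-r^2/(4\tau)}$ is super-polynomially small, $e^{-\tau^{2\varepsilon-1}/4}$, which kills that contribution into the $O(\tau^\alpha)$ error once $2\varepsilon - 1 <0$; the delicate part is the integral over $B_{\tau^\varepsilon}$. There I would use: (i) the defining PDE $-4\Delta_{g_R} u_\infty + \scal_{g_R} u_\infty = 0$ to rewrite $\tau(4|\nabla u|^2 + \scal_{g_R} u^2)$ after an integration by parts, producing a boundary term on $S_{\tau^\varepsilon}$; (ii) the normalization constant expansion, $c_\tau^{-2} = \int \Gamma_\tau^2 u_\infty\,dV_{g_R}$, which by the ALE expansion of $dV_{g_R}$ and $u_\infty - 1 \in \C^{2,\alpha}_{-\beta}$ equals $\frac{(4\pi\tau)^{n/2}}{|\Gamma|}\big(1 + O(\tau^{-\beta/2}) + \ldots\big)$, feeding lower-order corrections; and (iii) the asymptotic matching: the boundary term from (i) over $S_{\tau^\varepsilon}$, together with the logarithmic and $-n\,u^2$ terms in $\W$ evaluated against $\Gamma_\tau^2$, should reassemble into $\frac{\tau}{(4\pi\tau)^{n/2}}|\Gamma|\big(\int_{B_{\tau^\varepsilon}}(4|\nabla u_\infty|^2 + \scal\,u_\infty^2) - (\text{divergence flux of } h \text{ on } S_{\tau^\varepsilon})\big) + O(\tau^\alpha)$, where the bracket is exactly the truncated expression in \eqref{eq:lambda-ALE-DO}, hence equals $\lambda_{\ALE}(g_R) + O(\tau^{\varepsilon(n-2\beta-2)})$. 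I would invoke Lemma \ref{cutoff-asymptotics} for the precise bookkeeping of all error terms and the value of $\alpha$, and choose $\varepsilon$ small enough that both the ALE-truncation error $\tau^{\varepsilon(n-2\beta-2)}$ and the Gaussian tail are $o(\tau^{1-n/2})$, which is possible since $n - 2\beta - 2 <0$.

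The hard part will be step (iii): tracking exactly how the ADM-type flux integral $\int_{S_{\tau^\varepsilon}} \langle \div_e h - \nabla^e \tr_e h, \nu\rangle\, dA_e$ emerges from the Gaussian-weighted integration by parts. The subtlety is that $\Delta_{g_R}$ differs from $\Delta_e$ by terms involving $h = g_R - g_e$, and the Gaussian $\Gamma_\tau$ has $|\nabla\Gamma_\tau| \sim (r/\tau)\Gamma_\tau$, so on $S_{\tau^\varepsilon}$ one has $r \sim \tau^\varepsilon$ and $r/\tau \sim \tau^{\varepsilon - 1}$; one must verify that the combination of the metric-difference terms and the Gaussian gradient produces precisely the linearized scalar curvature / divergence structure and not spurious lower-order-in-$\tau$-but-not-small terms. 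This is exactly the kind of computation that, in the general ALE case treated in \S\ref{perelmans-mu-on-general-ALE}, forces the authors to use the radial gauge of \cite{x-ray-transform}; for $g_R$, however, since $g_R$ is genuinely Euclidean outside $B_{2R}$ and $\tau^\varepsilon$ can be taken much smaller than $R$ once $\tau$ is large, the analysis stays inside the fixed ALE chart and the expansions of \cite{deruelle-ozuch-2020} leading to \eqref{eq:lambda-ALE-DO} apply more or less directly, which is why this intermediate theorem is proved first. I would therefore organize the argument so that all the $\tau$-dependent estimates are reduced to Lemma \ref{cutoff-asymptotics}, and the identification of the constant is reduced to \eqref{eq:lambda-ALE-DO} applied at radius $R' = \tau^\varepsilon$.
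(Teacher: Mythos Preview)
Your proposal has the geometry of the construction backwards in a way that breaks the argument.

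First, you write that ``$\tau^\varepsilon$ can be taken much smaller than $R$ once $\tau$ is large.'' This is impossible: $R$ is fixed and $\tau\to\infty$, so $\tau^\varepsilon\to\infty$ for any $\varepsilon>0$. The paper does exactly the opposite: it requires $\tau^\varepsilon \gg 2R$, so that $\scal_{g_R}$ vanishes on all of $M\setminus B_{\tau^\varepsilon}$. This is the entire point of introducing $g_R$: outside the cutoff ball the metric is \emph{exactly} Euclidean, so the Gaussian $\Gamma_\tau$ is the genuine $\W$-minimizer there and the noncompact integral reproduces the cone value $\mu(\R^n/\Gamma)$ up to an $O(\tau^{\varepsilon n - n/2})$ error coming from the missing ball. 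Second, your claim that on $M\setminus B_{\tau^\varepsilon}$ the Gaussian weight is ``super-polynomially small, $e^{-\tau^{2\varepsilon-1}/4}$, once $2\varepsilon-1<0$'' is false: when $2\varepsilon-1<0$ one has $\tau^{2\varepsilon-1}\to 0$, so $\Gamma_\tau\to 1$ on that sphere, not $0$. The smallness of the compact contribution does not come from Gaussian decay but from the prefactor $(4\pi\tau)^{-n/2}$ against a region of volume $O(\tau^{\varepsilon n})$.

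Third, you spend the ``hard part'' worrying about how the ADM-type flux $\int_{S_{\tau^\varepsilon}}\langle\div_e h-\nabla^e\tr_e h,\nu\rangle\,dA_e$ emerges. But for $g_R$ it does not emerge at all: $\mathfrak{m}(g_R)=0$, so $\lambda_{\ALE}(g_R)=\lambda^0_{\ALE}(g_R)$, and the paper obtains $\lambda_{\ALE}(g_R)$ directly as $\int_{B_{\tau^\varepsilon}}(4|\nabla u_\infty|^2+\scal_{g_R}u_\infty^2)\,dV_{g_R}$ up to a tail of size $O(\tau^{\varepsilon(n-2\beta-2)})$. The flux extraction and the radial gauge of \S\ref{existence-of-radial-gauge} are needed only for the general $g$ in \S\ref{perelmans-mu-on-general-ALE}, precisely because the mass need not vanish there. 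The paper's actual test function is the piecewise interpolation $\tilde u_\tau=\chi_\tau u_\infty+(1-\chi_\tau)\Gamma_\tau$ across the annulus $A(\tau^\varepsilon,3\tau^\varepsilon)$, not the product $\Gamma_\tau u_\infty^{1/2}$; the three regions are then estimated separately in Lemmas \ref{compact-asymptotics}--\ref{cutoff-asymptotics} and the noncompact lemma, and the normalization correction is handled by Lemma \ref{normalizing-constant-asymptotics}.
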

\begin{remark}
    An early tentative proof of the general result for $g$ was based on the fact that as $R\to\infty$, one has $\lambda_{\ALE}(g_R)\to \lambda_{\ALE}(g)$. Unfortunately, the convergence $\mu_{\mathrm{ALE}}(g_R,\tau)\to\mu_{\mathrm{ALE}}(g,\tau)$ is not obvious, especially since in the proof below, $\tau$ is chosen depending on $R$.
\end{remark}

To prove Theorem \ref{mu-ALE-is-negative-for-gR}, we construct a test function \(u_\tau\) which satisfies the compatibility constraint in \eqref{eq:mu-ALE-defn} and
\begin{equation} \label{eq:W-lambda-ALE-inequality-for-gR}
    \mathcal{W}(u_\tau,g_R,\tau) = \frac{\tau}{(4\pi \tau)^{\frac{n}{2}}}\lambda_{\ALE}(g_R) + O(\tau^\alpha), \quad \alpha < 1-\frac{n}{2}.
\end{equation}

We recall that \(\lambda_{\mathrm{ALE}}(g_R)=\lambda^0_{\mathrm{ALE}}(g_R)\) since \(\mathfrak{m}(g_R)=0\). By \cite[Appendix A]{baldauf-ozuch-2}, for $R$ large enough, a minimizer of $\lambda_{\ALE}(g_R)$ with all the properties listed in Proposition \ref{energy-minimizers-for-positive-scalar-curvature} exists (in fact, minimizers exist along the curve of metrics $(1-t)g + tg_R$). Since the standard Gaussian minimizes the entropy on Euclidean space, we construct a function \(\tilde{u}_\tau\) which is a Gaussian \(\Gamma_\tau = \exp(-|x|^2/8\tau)\) outside some compact set. To make the \(\lambda_{\mathrm{ALE}}\)-term appear in \eqref{eq: asympt mu gR}, we make \(\tilde{u}_\tau\) coincide with the minimizer \(u_\infty\) of \(\lambda_{\mathrm{ALE}}(g_R)\) on a compact set. We then interpolate between this minimizer and the Gaussian on an annulus. More precisely,
\begin{equation} \label{eq:constructed-minimizer-for-gR}
    \tilde{u}_\tau :=
    \begin{cases}
        u_\infty & B_{\tau^\varepsilon} \\ \chi_\tau u_\infty + (1-\chi_\tau)\Gamma_\tau & A(\tau^\varepsilon, 3\tau^\varepsilon) \\ \Gamma_\tau & M\setminus B_{3\tau^\varepsilon},
    \end{cases}
\end{equation}
where \(\varepsilon \in (0,\frac{1}{n+2})\) and \(0\leq \chi_\tau \leq 1\) is a cutoff function supported in \(B_{3\tau^\varepsilon}\) which is identically \(1\) on \(B_{\tau^\varepsilon}\) and satisfies \(|\chi_\tau|,|\nabla \chi_\tau| \lesssim \tau^{-\varepsilon}\). We choose \(\tau \gg (2R)^{1/\varepsilon}\) so that $\scal_{g_R}=0$ outside $B_{\tau^\varepsilon}$. The idea behind this construction is that $\tilde{u}_\tau$ ``almost" solves the PDE satisfied by the minimizers of the entropy.

We need to ensure that $||\tilde{u}_\tau||^2_{L^2(dV_{g_R})}$ is close enough to \(\alpha_\tau\) to be able to interpolate between it and its normalization \(u_\tau := c_\tau \tilde{u}_\tau\) without affecting the estimates that follow. This is guaranteed by the next lemma.

\begin{lemma} \label{normalizing-constant-asymptotics}
    Let \(c_\tau\) be the constant which satisfies \(||c_\tau \tilde{u}_\tau||^2_{L^2(dV_{g_R})} = \alpha_\tau\). Then
    \begin{equation}
        c_\tau^2 = 1 + O(\tau^{\varepsilon n - \frac{n}{2}}).
    \end{equation}
    \begin{proof}    
         The compact and cutoff regions can be estimated using the fact that $\tilde{u}_\tau$ is bounded independently of $\tau$ since $u_\infty$ and $\Gamma_\tau$ are. In particular,
        \begin{equation}
            0 \leq \int_{B_{\tau^\varepsilon}} u_\infty^2 dV_{g_R} \leq C_1\tau^{\varepsilon n} \quad \mathrm{and} \quad 0 \leq \int_{A(\tau^\varepsilon,3\tau^\varepsilon)} \tilde{u}_\tau^2 dV_{g_R} \leq C_2\tau^{\varepsilon n}.
        \end{equation}
        As for the noncompact region, the fact that $\int_{\R^n/\Gamma} \Gamma_\tau^2 dV_e = \alpha_\tau$ implies
        \begin{align}
            \int_{M\setminus B_{3\tau^\varepsilon}} \tilde{u}_\tau^2 dV_g &= \int_{(\R^n/\Gamma) \setminus B_{3\tau^\varepsilon}} \Gamma_\tau^2dV_e \in [\alpha_\tau - C_3\tau^{\varepsilon n},\alpha_\tau].
        \end{align}
        Combining these estimates, we obtain
        \begin{equation} \label{eq:L2-estimate-gR}
            \alpha_\tau- C_3\tau^{\varepsilon n} \leq \int_M \tilde{u}_\tau^2 dV_g \leq \alpha_\tau + (C_1+C_2)\tau^{\varepsilon n}.
        \end{equation}
        Multiplying \eqref{eq:L2-estimate-gR} by $c_\tau^2$ and dividing, we obtain
        \begin{align}
            1-\frac{(C_1+C_2)\tau^{\varepsilon n}}{\alpha_\tau} \leq c_\tau^2 \leq  1 + \frac{C_3\tau^{\varepsilon n}}{\alpha_\tau}.
        \end{align}
        Since $\alpha_\tau = O(\tau^{\frac{n}{2}})$, this yields the desired estimate.
    \end{proof}
\end{lemma}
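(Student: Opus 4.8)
The statement to prove is Lemma \ref{normalizing-constant-asymptotics}, asserting that the normalizing constant $c_\tau$ defined by $\|c_\tau\tilde u_\tau\|^2_{L^2(dV_{g_R})}=\alpha_\tau$ satisfies $c_\tau^2 = 1 + O(\tau^{\varepsilon n - \frac n2})$.

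The plan is to split the integral $\int_M \tilde u_\tau^2\,dV_{g_R}$ according to the three regions in the definition \eqref{eq:constructed-minimizer-for-gR} of $\tilde u_\tau$: the compact core $B_{\tau^\varepsilon}$, the interpolation annulus $A(\tau^\varepsilon,3\tau^\varepsilon)$, and the exterior $M\setminus B_{3\tau^\varepsilon}$. On the first two regions I would use the fact that $\tilde u_\tau$ is bounded uniformly in $\tau$ — because $u_\infty$ is bounded by Proposition \ref{energy-minimizers-for-positive-scalar-curvature} ($c\le u_\infty\le 1$) and the Gaussian $\Gamma_\tau\le 1$ — so each contribution is at most a constant times the volume of the region, which is $\lesssim \tau^{\varepsilon n}$ since the metric $g_R$ is uniformly equivalent to $g_e$ and these balls have Euclidean radius comparable to $\tau^\varepsilon$. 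On the exterior region, $\tilde u_\tau=\Gamma_\tau$ and $g_R=g$ there; but more importantly $g=g_e$ for $\tau$ large (since $\tau\gg(2R)^{1/\varepsilon}$ forces $g_R=g_e$ outside $B_{\tau^\varepsilon}$, using that $\scal_{g_R}=0$ there, actually $g_R = g_e$ outside $B_{2R}$ and $\tau^\varepsilon > 2R$), so $\int_{M\setminus B_{3\tau^\varepsilon}}\Gamma_\tau^2\,dV_{g_e}$ equals $\alpha_\tau$ minus $\int_{(\R^n/\Gamma)\cap B_{3\tau^\varepsilon}}\Gamma_\tau^2\,dV_e$, and the latter is a nonnegative quantity bounded by $C_3\tau^{\varepsilon n}$ (again volume times the bound $\Gamma_\tau\le1$). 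Summing the three pieces yields the two-sided bound \eqref{eq:L2-estimate-gR}, namely $\alpha_\tau - C_3\tau^{\varepsilon n}\le \int_M\tilde u_\tau^2\,dV_g\le \alpha_\tau+(C_1+C_2)\tau^{\varepsilon n}$.

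From the two-sided bound the conclusion is essentially algebra: multiply through by $c_\tau^2$, use $c_\tau^2\int_M\tilde u_\tau^2 = \alpha_\tau$ to isolate $c_\tau^2$, and divide by $\alpha_\tau$. This gives $1 - \frac{(C_1+C_2)\tau^{\varepsilon n}}{\alpha_\tau} \le c_\tau^2 \le 1 + \frac{C_3\tau^{\varepsilon n}}{\alpha_\tau}$. Since $\alpha_\tau = (4\pi\tau)^{n/2}/|\Gamma|$ is comparable to $\tau^{n/2}$, both error terms are $O(\tau^{\varepsilon n - n/2})$, which is the claim. Note $\varepsilon n - n/2 < 0$ because $\varepsilon < \frac1{n+2} < \frac12$, so this is genuinely a decaying error, which is what makes the normalization harmless in the subsequent entropy estimates.

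The main (very mild) obstacle is just making sure the volume estimates on $B_{\tau^\varepsilon}$ and $A(\tau^\varepsilon,3\tau^\varepsilon)$ are uniform in $\tau$: this requires knowing that $dV_{g_R}$ is comparable to $dV_e$ on $M_\infty$ with constants independent of $\tau$ (true because $g_R$ differs from $g_e$ only by the fixed ALE perturbation cut off at a $\tau$-independent scale $2R$, and on the fixed compact core $B_{2R}$ the metric $g_R$ interpolates between the two fixed metrics $g$ and $g_e$), and that the number of copies of a fundamental domain does not blow up — which is immediate since we are on a fixed manifold. There is no real analytic difficulty here; the lemma is a bookkeeping step ensuring the test function can be renormalized at negligible cost. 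I would keep the proof as short as the one sketched in the excerpt.
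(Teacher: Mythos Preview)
Your proposal is correct and follows essentially the same approach as the paper's proof: the identical three-region decomposition, the same uniform boundedness of $\tilde u_\tau$ on the compact and cutoff regions yielding $O(\tau^{\varepsilon n})$ volume bounds, the same use of $\int_{\R^n/\Gamma}\Gamma_\tau^2\,dV_e=\alpha_\tau$ on the exterior (where $g_R=g_e$), and the same algebraic manipulation to extract the bound on $c_\tau^2$. Your additional remarks---that $g_R=g_e$ outside $B_{2R}\subset B_{\tau^\varepsilon}$ and that the volume comparison constants are $\tau$-independent---are helpful clarifications but do not change the argument.
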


By Lemma \ref{normalizing-constant-asymptotics}, it suffices to show that \(\tilde{u}_\tau\) satisfies \eqref{eq:W-lambda-ALE-inequality-for-gR} to reach the same conclusion for $u_\tau$ since
\begin{equation} \label{eq:normalization-interpolation}
    \mathcal{W}(u_\tau,g_R,\tau) = c_\tau^2\mathcal{W}(\tilde{u}_\tau,g_R,\tau) - |\Gamma|^{-1}\log(c_\tau^2) = \mathcal{W}(\tilde{u}_\tau,g_R,\tau) + O(\tau^{\varepsilon n - \frac{n}{2}}).
\end{equation}
To estimate \(\mathcal{W}(\tilde{u}_\tau,g_R,\tau)\), we first look at the entropy integral over the compact region \(B_{\tau^\varepsilon}\).

\begin{lemma} \label{compact-asymptotics}
    We have the estimate
    \begin{align}
        \int_{B_{\tau^\varepsilon}} &[\tau(4|\nabla \tilde{u}_\tau|^2 + \scal_{g_R} \tilde{u}_\tau^2) - \tilde{u}_\tau^2\log(\tilde{u}_\tau^2) - n\tilde{u}_\tau^2](4\pi \tau)^{-\frac{n}{2}}dV_{g_R} = \frac{\tau}{(4\pi \tau)^{\frac{n}{2}}}\lambda_{\mathrm{ALE}}(g_R) + O(\tau^{\alpha}),
    \end{align}
    where $\alpha = \max(\varepsilon n - \frac{n}{2}, 1-\frac{n}{2}+\varepsilon(n-2\beta-2))$.
    \begin{proof}
        Recall that \(\tau^\varepsilon \gg  2R\), so \(\scal_{g_R}=0\) outside \(B_{\tau^\varepsilon}\). Then we find
        \begin{align}
            \int_{B_{\tau^\varepsilon}} (4|\nabla \tilde{u}_\tau|^2 + \scal_{g_R} \tilde{u}_\tau^2) dV_{g_R} &= \int_{B_{\tau^\varepsilon}} (4|\nabla u_\infty|^2 + \scal_{g_R} u_\infty^2) dV_{g_R} \\ &= \lambda_{\mathrm{ALE}}(g_R) - \int_{M\setminus B_{\tau^\varepsilon}} 4|\nabla u_\infty|^2 dV_{g_R}. \label{eq:compact-gR-estimate-1}
        \end{align}
        Since \(\nabla u_\infty = O(r^{-\beta-1})\), we estimate
        \begin{equation} \label{eq:gradient-estimate-for-F-ALE-minimizer}
            \int_{M\setminus B_{\tau^\varepsilon}} 4|\nabla u_\infty|^2 dV_{g_R} = O(\tau^{\varepsilon(n-2\beta-2)}).
        \end{equation}
        To estimate the Nash entropy term, observe that since there exists $c>0$ independent on $\tau$ so that $c\leqslant\tilde{u}_\tau^2\leqslant c^{-1}$,
        \begin{equation}
            \int_{B_{\tau^\varepsilon}} (\tilde{u}_\tau^2\log(\tilde{u}_\tau^2) + n\tilde{u}_\tau^2) = O(\tau^{\varepsilon n}).
        \end{equation}
        Combining these estimates, we obtain
        \begin{align}
            \int_{B_{\tau^\varepsilon}} &[\tau(4|\nabla \tilde{u}_\tau|^2 + \scal_{g_R} \tilde{u}_\tau^2) - \tilde{u}_\tau^2\log(\tilde{u}_\tau^2) - n\tilde{u}_\tau^2](4\pi \tau)^{-\frac{n}{2}}dV_{g_R} \\ &= \frac{\tau}{(4\pi \tau)^{\frac{n}{2}}}\lambda_{\mathrm{ALE}}(g) + O(\tau^{1 - \frac{n}{2} + \varepsilon(n-2\beta-2)}) + O(\tau^{\varepsilon n - \frac{n}{2}}).
        \end{align}
    \end{proof}
\end{lemma}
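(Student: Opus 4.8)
The plan is to exploit two structural features of the test function $\tilde u_\tau$ on the ball $B_{\tau^\varepsilon}$. First, because $\tau \gg (2R)^{1/\varepsilon}$, on $B_{\tau^\varepsilon}$ we have $\tilde u_\tau \equiv u_\infty$, where $u_\infty$ is the $\lambda_{\ALE}$-minimizer of $g_R$, and moreover the entire support of $\scal_{g_R}$ (which lies in $B_{2R}$) is contained in $B_{\tau^\varepsilon}$. Second, by Proposition \ref{energy-minimizers-for-positive-scalar-curvature} — whose hypotheses hold for $g_R$ when $R$ is large, by \cite[Appendix A]{baldauf-ozuch-2} — the function $u_\infty$ satisfies $c \le u_\infty \le 1$ and $u_\infty - 1\in\C^{2,\alpha}_{-\beta}$, hence $|\nabla u_\infty| = O(r^{-\beta-1})$. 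I would therefore split the entropy integrand into the energy part $\tau(4|\nabla \tilde u_\tau|^2 + \scal_{g_R}\tilde u_\tau^2)$ and the Nash part $-\tilde u_\tau^2\log \tilde u_\tau^2 - n\tilde u_\tau^2$, and handle the two contributions separately.

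For the energy part, since $\scal_{g_R}$ vanishes outside $B_{\tau^\varepsilon}$, I would write $\int_{B_{\tau^\varepsilon}}(4|\nabla u_\infty|^2 + \scal_{g_R}u_\infty^2)\,dV_{g_R} = \mathcal{F}_{\ALE}(u_\infty,g_R) - \int_{M\setminus B_{\tau^\varepsilon}} 4|\nabla u_\infty|^2\,dV_{g_R}$ and then identify $\mathcal{F}_{\ALE}(u_\infty,g_R) = \lambda^0_{\ALE}(g_R) = \lambda_{\ALE}(g_R)$, the last step because $\mathfrak{m}(g_R)=0$ ($g_R$ is exactly Euclidean outside $B_{2R}$); this also confirms $\lambda_{\ALE}(g_R)$ is finite, $\scal_{g_R}$ being compactly supported and $|\nabla u_\infty|^2 = O(r^{-2\beta-2})$ integrable for $\beta > \frac{n-2}{2}$. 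The tail integral is controlled by this same decay together with the uniform comparability of $dV_{g_R}$ with $dV_e$ on the end: $\int_{M\setminus B_{\tau^\varepsilon}} 4|\nabla u_\infty|^2\,dV_{g_R} \lesssim \int_{\tau^\varepsilon}^\infty r^{n-2\beta-3}\,dr = O(\tau^{\varepsilon(n-2\beta-2)})$. Multiplying by $\tau(4\pi\tau)^{-\frac{n}{2}}$ yields the main term $\frac{\tau}{(4\pi\tau)^{\frac{n}{2}}}\lambda_{\ALE}(g_R)$ plus an error $O(\tau^{1-\frac{n}{2}+\varepsilon(n-2\beta-2)})$.

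For the Nash part, I would use that on $B_{\tau^\varepsilon}$ one has $\tilde u_\tau = u_\infty$ with $c\le u_\infty^2 \le c^{-1}$, so $|\tilde u_\tau^2\log\tilde u_\tau^2 + n\tilde u_\tau^2|$ is bounded by a constant independent of $\tau$; together with the elementary volume growth bound $\mathrm{Vol}_{g_R}(B_{\tau^\varepsilon}) = O(\tau^{\varepsilon n})$ (from comparability with $g_e$ on the end and a fixed compact core) this gives a contribution $O(\tau^{\varepsilon n})$, hence $O(\tau^{\varepsilon n - \frac{n}{2}})$ after multiplying by $(4\pi\tau)^{-\frac{n}{2}}$. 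Adding the two estimates and setting $\alpha = \max(\varepsilon n - \frac{n}{2},\, 1-\frac{n}{2}+\varepsilon(n-2\beta-2))$ gives the claim.

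The argument is essentially bookkeeping; the one point that genuinely requires care is the identity $\mathcal{F}_{\ALE}(u_\infty,g_R) = \lambda_{\ALE}(g_R)$, which presupposes the existence of the good minimizer of Proposition \ref{energy-minimizers-for-positive-scalar-curvature} for $g_R$ even though $\scal_{g_R}$ need not be nonnegative across the gluing annulus $A(R,2R)$. I would address this exactly as noted before the lemma, via \cite[Appendix A]{baldauf-ozuch-2}: for $R$ large, $\lambda_{\ALE}$-minimizers with the two-sided bound $c\le u_\infty\le 1$ and the weighted decay $u_\infty - 1\in\C^{2,\alpha}_{-\beta}$ exist along the whole segment $(1-t)g + tg_R$. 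The restriction $\varepsilon\in(0,\frac{1}{n+2})$ plays no role in this lemma itself; it is needed only downstream, in the proof of Theorem \ref{mu-ALE-is-negative-for-gR}, to keep every error exponent strictly below $1-\frac{n}{2}$.
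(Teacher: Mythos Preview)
Your proposal is correct and follows essentially the same approach as the paper: split into the energy and Nash-entropy parts, use $\tilde u_\tau=u_\infty$ on $B_{\tau^\varepsilon}$ together with $\scal_{g_R}=0$ outside $B_{\tau^\varepsilon}$ to identify the energy integral as $\lambda_{\ALE}(g_R)$ minus a tail controlled by $|\nabla u_\infty|=O(r^{-\beta-1})$, and bound the Nash term via the uniform two-sided bound on $u_\infty$ and the volume growth of $B_{\tau^\varepsilon}$. Your additional remarks on why $\mathcal{F}_{\ALE}(u_\infty,g_R)=\lambda_{\ALE}(g_R)$ and on the existence of the minimizer for $g_R$ are accurate elaborations of points the paper records just before the lemma.
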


We now deal with the cutoff annulus.

\begin{lemma} \label{cutoff-asymptotics}
    We have the estimate
    \begin{align}
        \int_{A(\tau^\varepsilon,3\tau^\varepsilon)}[\tau(4|\nabla \tilde{u}_\tau|^2 + \scal_{g_R}  \tilde{u}_\tau^2) - \tilde{u}_\tau^2\log(\tilde{u}_\tau^2) - n\tilde{u}_\tau^2](4\pi \tau)^{-\frac{n}{2}}dV_{g_R} = O(\tau^{\alpha}),
    \end{align}
    where $\alpha$ is as in Lemma \ref{compact-asymptotics}.
    \begin{proof}
        Since
        \begin{equation}
            \nabla \tilde{u}_\tau = \chi \nabla u_\infty + (1-\chi)\nabla \Gamma_\tau + \nabla \chi(u_\infty - \Gamma_\tau)
        \end{equation}
        and \(|\nabla u_\infty| = O(r^{-\beta-1})\), \(|\chi|\leq 1\), \(|\nabla \chi| \leq C\tau^{-\varepsilon}\), it follows that on \(A(\tau^{\varepsilon},3\tau^{\varepsilon})\),
        \begin{align} \label{eq:gradient-of-u-tau-gR}
            |\nabla \tilde{u}_\tau| &\leq |\nabla u_\infty| + |\nabla \Gamma_\tau| + C\tau^{-\varepsilon}|u_\infty-\Gamma|.
        \end{align}
        To estimate $|u_\infty-\Gamma_\tau|$, we first observe that on $A(\tau^\varepsilon,3\tau^\varepsilon)$,
        \begin{equation}
            \Gamma_\tau \geq \Gamma_\tau(S_{3\tau^\varepsilon}) = 1 - \frac{3^{2\varepsilon -1}}{8}\tau^{2\varepsilon -1} + O(\tau^{4\varepsilon-2}).
        \end{equation}
        Then using that $u_\infty = 1+O(r^{-\beta})$, 
        \begin{equation}
            u_\infty - \Gamma_\tau \leq \frac{3^{2\varepsilon -1}}{8}\tau^{2\varepsilon -1} + O(\tau^{4\varepsilon-2}) + O(\tau^{-\varepsilon \beta})= O(\tau^{\max(-\varepsilon \beta,2\varepsilon-1)}).
        \end{equation}
        Similarly, since $\Gamma_\tau \leq 1$ and $u_\infty \geq 1-cr^{-\beta}$ on $A(\tau^\varepsilon,3\tau^\varepsilon)$,
        \begin{equation}
            u_\infty - \Gamma_\tau \geq -cr^{-\beta} \geq -c\tau^{-\varepsilon \beta}.
        \end{equation}
        It then follows that
        \begin{equation} \label{eq:minimizer-is-close-to-gaussian}
            \tau^{-\varepsilon}|u_\infty -\Gamma_\tau| \lesssim \tau^{\varepsilon -1} + \tau^{-\varepsilon(\beta+1)} \lesssim \tau^{-\varepsilon(\beta+1)},
        \end{equation}
        where we use that $\varepsilon -1 < -\varepsilon(\beta+1)$. Since $|\nabla u_\infty|=O(r^{-\beta-1})$ and $|\nabla \Gamma_\tau|=O(\tau^{\varepsilon-1})$ on $A(\tau^\varepsilon,3\tau^\varepsilon)$, \eqref{eq:gradient-of-u-tau-gR}  and \eqref{eq:minimizer-is-close-to-gaussian} imply the estimate
        \begin{align}
            |\nabla \tilde{u}_\tau| \lesssim r^{-\beta-1} + \tau^{-\varepsilon(\beta+1)}, \quad \mathrm{hence} \quad |\nabla \tilde{u}_\tau|^2 \lesssim r^{-2\beta-2} + \tau^{-2\varepsilon(\beta+1)}.
        \end{align}
        Integrating over the annulus, we obtain
        \begin{equation}
            \frac{\tau}{(4\pi\tau)^{\frac{n}{2}}}\int_{A(\tau^\varepsilon,3\tau^\varepsilon)} |\nabla \tilde{u}_\tau|^2 dV_{g_R} = O(\tau^{1-\frac{n}{2}+\varepsilon(n-2\beta-2)}).
        \end{equation}
        The Nash entropy term is estimated in the same way as before: since $u_\infty$ and $\Gamma_\tau$ are both bounded below away from zero on the annulus, there is there is a constant $c$ such that $c^{-1}\leq \tilde{u}_\tau \leq c$ on the annulus, hence
        \begin{equation}
             \int_{A(\tau^\varepsilon,3\tau^\varepsilon)} (\tilde{u}_\tau^2\log(\tilde{u}_\tau^2) + n\tilde{u}_\tau^2)(4\pi \tau)^{-\frac{n}{2}} = O(\tau^{\varepsilon n-\frac{n}{2}}).
        \end{equation}
        These two estimates and the vanishing of \(\scal_{g_R}\) on the annulus give the desired result.
    \end{proof}
\end{lemma}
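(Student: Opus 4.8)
The plan is to estimate the two groups of terms in the integrand separately, exploiting that on the annulus $A(\tau^\varepsilon,3\tau^\varepsilon)$ we have $\scal_{g_R}=0$ (since $\tau^\varepsilon\gg 2R$), so the scalar-curvature term drops out entirely. What remains is the gradient term $\tau\cdot 4|\nabla\tilde{u}_\tau|^2$ and the Nash entropy term $-\tilde{u}_\tau^2\log(\tilde{u}_\tau^2)-n\tilde{u}_\tau^2$, each weighted by $(4\pi\tau)^{-\frac{n}{2}}$. The entropy term is the easy one: on the annulus both $u_\infty$ and $\Gamma_\tau$ are bounded above and below away from zero by constants independent of $\tau$, so the interpolation $\tilde{u}_\tau$ is too, and the integrand $\tilde{u}_\tau^2\log(\tilde{u}_\tau^2)+n\tilde{u}_\tau^2$ is uniformly bounded. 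Since the annulus has $g_R$-volume $O(\tau^{\varepsilon n})$, this contributes $O(\tau^{\varepsilon n-\frac{n}{2}})$, already within the claimed bound.

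The real work is the gradient term. First I would expand $\nabla\tilde{u}_\tau$ by the product rule, writing
\begin{equation}
    \nabla\tilde{u}_\tau = \chi_\tau\nabla u_\infty + (1-\chi_\tau)\nabla\Gamma_\tau + \nabla\chi_\tau\,(u_\infty-\Gamma_\tau).
\end{equation}
The first two pieces are controlled directly: the ALE decay of the minimizer gives $|\nabla u_\infty|=O(r^{-\beta-1})$, and a direct computation gives $|\nabla\Gamma_\tau|=O(\tau^{\varepsilon-1})$ on the annulus, where $r\sim\tau^\varepsilon$ and $|\nabla\Gamma_\tau|\sim\frac{r}{4\tau}\Gamma_\tau$. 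The delicate piece is the cutoff term $\nabla\chi_\tau\,(u_\infty-\Gamma_\tau)$: here $|\nabla\chi_\tau|\lesssim\tau^{-\varepsilon}$, but it multiplies the difference $u_\infty-\Gamma_\tau$, and this is where the whole estimate hinges. I expect it to be the main obstacle, since it is exactly the point where the $\lambda_{\ALE}$-minimizer must be compared against the Gaussian on a growing annulus.

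To control $|u_\infty-\Gamma_\tau|$ I would compare both quantities to the constant $1$. On the annulus the ALE normalization gives $u_\infty-1=O(r^{-\beta})=O(\tau^{-\varepsilon\beta})$, while Taylor-expanding the Gaussian $\Gamma_\tau=\exp(-r^2/8\tau)$ at $r\sim\tau^\varepsilon$ gives $\Gamma_\tau-1=O(\tau^{2\varepsilon-1})$. Hence $|u_\infty-\Gamma_\tau|=O(\tau^{\max(-\varepsilon\beta,\,2\varepsilon-1)})$, so that $\tau^{-\varepsilon}|u_\infty-\Gamma_\tau|\lesssim\tau^{\varepsilon-1}+\tau^{-\varepsilon(\beta+1)}$. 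Using the admissible range $\varepsilon<\frac{1}{n+2}$, which together with $\beta<n-2$ forces $\varepsilon(\beta+2)<1$, i.e.\ $\varepsilon-1<-\varepsilon(\beta+1)$, the second term dominates and the cutoff contribution is $\lesssim\tau^{-\varepsilon(\beta+1)}$. Altogether $|\nabla\tilde{u}_\tau|\lesssim r^{-\beta-1}+\tau^{-\varepsilon(\beta+1)}$, hence $|\nabla\tilde{u}_\tau|^2\lesssim r^{-2\beta-2}+\tau^{-2\varepsilon(\beta+1)}$.

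Finally I would integrate and track powers of $\tau$. On the annulus both $\int_{A(\tau^\varepsilon,3\tau^\varepsilon)} r^{-2\beta-2}\,dV_{g_R}$ and $\tau^{-2\varepsilon(\beta+1)}\cdot\mathrm{vol}(A)$ are of size $O(\tau^{\varepsilon(n-2\beta-2)})$; for the first, $\int_{\tau^\varepsilon}^{3\tau^\varepsilon}r^{n-1-2\beta-2}\,dr\sim\tau^{\varepsilon(n-2\beta-2)}$ because $\beta>\frac{n-2}{2}$ makes the exponent integrable, and for the second, $\tau^{-2\varepsilon(\beta+1)}\cdot\tau^{\varepsilon n}=\tau^{\varepsilon(n-2\beta-2)}$. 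Multiplying by the prefactor $\frac{\tau}{(4\pi\tau)^{n/2}}\sim\tau^{1-\frac{n}{2}}$ turns the gradient contribution into $O(\tau^{1-\frac{n}{2}+\varepsilon(n-2\beta-2)})$. Combining with the entropy contribution $O(\tau^{\varepsilon n-\frac{n}{2}})$ yields the stated bound $O(\tau^{\alpha})$ with $\alpha=\max(\varepsilon n-\frac{n}{2},\,1-\frac{n}{2}+\varepsilon(n-2\beta-2))$, exactly the exponent of Lemma \ref{compact-asymptotics}.
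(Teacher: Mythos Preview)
Your proposal is correct and follows essentially the same route as the paper's proof: expand $\nabla\tilde{u}_\tau$ by the product rule, control $|u_\infty-\Gamma_\tau|$ by comparing each to $1$ (giving $O(\tau^{-\varepsilon\beta})$ and $O(\tau^{2\varepsilon-1})$ respectively), use $\varepsilon(\beta+2)<1$ to absorb the subdominant exponent, integrate over the annulus of volume $\sim\tau^{\varepsilon n}$, and handle the Nash term by a uniform two-sided bound on $\tilde{u}_\tau$. Your justification of $\varepsilon-1<-\varepsilon(\beta+1)$ via $\beta<n-2$ and $\varepsilon<\frac{1}{n+2}$ is slightly more explicit than the paper's, and your remark about integrability of $r^{-2\beta-2}$ is unnecessary (the annulus is compact, so only the scaling matters), but the final estimates and exponents match exactly.
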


It remains to estimate the integral over the noncompact region.

\begin{lemma}
     We have the estimate
    \begin{align}
        \int_{M\setminus B_{3\tau^\varepsilon}}[\tau(4|\nabla \tilde{u}_\tau|^2 + \scal_{g_R}  \tilde{u}_\tau^2) - \tilde{u}_\tau^2\log(\tilde{u}_\tau^2) - n\tilde{u}_\tau^2](4\pi \tau)^{-\frac{n}{2}}dV_{g_R} = O(\tau^{\varepsilon n -\frac{n}{2}}).
    \end{align}
    \begin{proof}
        We first recall that
        \begin{equation}
            \mathcal{W}(\Gamma_\tau,g_e,\tau) = \int_{\R^n} [4\tau|\nabla \Gamma_\tau|^2 - \Gamma_\tau^2\log(\Gamma_\tau^2)](4\pi \tau)^{-\frac{n}{2}}dV_e  -n = 0,
        \end{equation}
        which implies
        \begin{align}
            \int_{\R^n \setminus B_{3\tau^\varepsilon}} (4\tau|\nabla \Gamma_\tau|^2 - \Gamma_\tau^2\log(\Gamma_\tau^2))(4\pi \tau)^{-\frac{n}{2}} dV_e- n &= -\int_{B_{3\tau^\varepsilon}}(4\tau|\nabla \Gamma_\tau|^2 - \Gamma_\tau^2\log(\Gamma_\tau^2))(4\pi \tau)^{-\frac{n}{2}}dV_e \\ &= -\int_{B_{3\tau^\varepsilon}} \frac{r^2}{2\tau}\Gamma_\tau^2 (4\pi \tau)^{-\frac{n}{2}}dV_e \\ &= O(\tau^{\varepsilon(n+2)-1-\frac{n}{2}}) = O(\tau^{\varepsilon n - \frac{n}{2}}),
        \end{align}
        where we use that $2\varepsilon - 1 < \frac{2}{n+2}-1<0$. It then follows that
        \begin{align}
            &\int_{(\R^n/\Gamma) \setminus B_{3\tau^\varepsilon}} (4\tau|\nabla \Gamma_\tau|^2 - \Gamma_\tau^2\log(\Gamma_\tau^2) - n\Gamma_\tau^2)(4\pi \tau)^{-\frac{n}{2}} dV_e \\ &= \frac{1}{|\Gamma|}\left(\int_{\R^n \setminus B_{3\tau^\varepsilon}} (4\tau|\nabla \Gamma_\tau|^2 - \Gamma_\tau^2\log(\Gamma_\tau^2))(4\pi \tau)^{-\frac{n}{2}} dV_e -n + \int_{B_{3\tau^\varepsilon}} n\Gamma_\tau^2(4\pi \tau)^{-\frac{n}{2}}dV_e\right) =O(\tau^{\varepsilon n -\frac{n}{2}}).
        \end{align}
        Since \(\tilde{u}_\tau = \Gamma_\tau\) and $\scal_{g_R}=0$ on $M\setminus B_{3\tau^\varepsilon}$, the lemma follows.
    \end{proof}
\end{lemma}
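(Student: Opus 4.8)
The plan is to notice that on $M\setminus B_{3\tau^\varepsilon}$ everything is honestly Euclidean, which reduces the claimed estimate to an elementary Gaussian tail computation. First I would record that, by the choice $\tau\gg(2R)^{1/\varepsilon}$ made above, both $g_R=g_e$ and $\scal_{g_R}=0$ hold on $M\setminus B_{2R}\supseteq M\setminus B_{3\tau^\varepsilon}$, while $\tilde u_\tau=\Gamma_\tau$ there by construction. Hence the integral to be estimated is exactly
\[
    \int_{(\R^n/\Gamma)\setminus B_{3\tau^\varepsilon}}\bigl[4\tau|\nabla\Gamma_\tau|^2-\Gamma_\tau^2\log\Gamma_\tau^2-n\Gamma_\tau^2\bigr](4\pi\tau)^{-\frac{n}{2}}\,dV_e,
\]
and since $\Gamma_\tau$ is $\Gamma$-invariant this equals $|\Gamma|^{-1}$ times the corresponding integral over the Euclidean exterior $\R^n\setminus B_{3\tau^\varepsilon}$; the factor $|\Gamma|^{-1}$ is harmless for an $O(\cdot)$ bound.

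Next I would bring in the one nontrivial input: the standard Gaussian realizes $\mu(\R^n,\tau)=0$, i.e.
\[
    \int_{\R^n}\bigl[4\tau|\nabla\Gamma_\tau|^2-\Gamma_\tau^2\log\Gamma_\tau^2\bigr](4\pi\tau)^{-\frac{n}{2}}\,dV_e=n,\qquad \int_{\R^n}\Gamma_\tau^2(4\pi\tau)^{-\frac{n}{2}}\,dV_e=1.
\]
Writing the exterior integral as the full $\R^n$ integral minus the contribution of the ball $B_{3\tau^\varepsilon}$, these two identities make the constant $n$ cancel, so the whole problem collapses to showing that $\int_{B_{3\tau^\varepsilon}}\bigl[4\tau|\nabla\Gamma_\tau|^2-\Gamma_\tau^2\log\Gamma_\tau^2\bigr](4\pi\tau)^{-n/2}\,dV_e$ and $\int_{B_{3\tau^\varepsilon}}\Gamma_\tau^2(4\pi\tau)^{-n/2}\,dV_e$ are both $O(\tau^{\varepsilon n-\frac{n}{2}})$.

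For those I would use the pointwise identities $4\tau|\nabla\Gamma_\tau|^2=-\Gamma_\tau^2\log\Gamma_\tau^2=\frac{r^2}{4\tau}\Gamma_\tau^2$, so the first integrand equals $\frac{r^2}{2\tau}\Gamma_\tau^2(4\pi\tau)^{-\frac{n}{2}}\le C\tau^{2\varepsilon-1}(4\pi\tau)^{-\frac{n}{2}}$ on $B_{3\tau^\varepsilon}$ (using $\Gamma_\tau\le1$), and integrating against $\mathrm{Vol}_e(B_{3\tau^\varepsilon})=O(\tau^{\varepsilon n})$ gives $O(\tau^{\varepsilon(n+2)-1-\frac{n}{2}})$; the second integral is bounded directly by $\mathrm{Vol}_e(B_{3\tau^\varepsilon})(4\pi\tau)^{-\frac{n}{2}}=O(\tau^{\varepsilon n-\frac{n}{2}})$. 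The remaining point is the arithmetic of exponents: $\varepsilon<\frac{1}{n+2}$ forces $2\varepsilon-1<0$, hence $\varepsilon(n+2)-1-\frac{n}{2}\le\varepsilon n-\frac{n}{2}$, so both error terms --- and therefore the original integral, up to the factor $|\Gamma|^{-1}$ --- are $O(\tau^{\varepsilon n-\frac{n}{2}})$.

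I do not anticipate any genuine obstacle in this lemma: morally it just says that the Gaussian tail carries all but a negligible part of the (vanishing) normalized Euclidean entropy. The only mild care needed is to keep the Gaussian estimates uniform in $\tau$ --- they are, since every constant depends only on the fixed normalization $\int_{\R^n}\Gamma_\tau^2(4\pi\tau)^{-n/2}\,dV_e=1$ and on volumes of Euclidean balls --- and to verify that the exponent $\varepsilon(n+2)-1-\frac{n}{2}$ is dominated by $\varepsilon n-\frac{n}{2}$. The substantive estimates of this section are the compact-region and cut-off lemmas preceding this one; this noncompact piece is the easy last bookkeeping step.
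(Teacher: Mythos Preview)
Your proposal is correct and follows essentially the same route as the paper: reduce to a Euclidean Gaussian computation via $g_R=g_e$, $\scal_{g_R}=0$, $\tilde u_\tau=\Gamma_\tau$ on the exterior, invoke $\mathcal{W}(\Gamma_\tau,g_e,\tau)=0$ and the normalization $\int\Gamma_\tau^2(4\pi\tau)^{-n/2}=1$, and then bound the ball contributions using the pointwise identity $4\tau|\nabla\Gamma_\tau|^2=-\Gamma_\tau^2\log\Gamma_\tau^2=\tfrac{r^2}{4\tau}\Gamma_\tau^2$ together with $\varepsilon<\tfrac{1}{n+2}$. The only cosmetic difference is that you split the error into the two ball integrals explicitly, whereas the paper combines them in one line; the estimates and exponents are identical.
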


Since all the powers of \(\tau\) in the previous three lemmas are strictly less than \(1-\frac{n}{2}\), \(\tilde{u}_\tau\) satisfies \eqref{eq:W-lambda-ALE-inequality-for-gR}, hence \(u_\tau\) does as well. This observation in tandem with \eqref{eq:mu-ALE-alternate} proves Theorem \ref{mu-ALE-is-negative-for-gR}.

\subsection{Perelman's $\mu$-functional on general ALE metrics} \label{perelmans-mu-on-general-ALE}

\subsubsection{Estimates in a radial gauge at infinity} \label{existence-of-radial-gauge}

A subtle point of our proof is that the test function we introduce is constructed in a specific convenient gauge. Indeed, in general coordinates, many terms a priori do not decay fast enough. This best extends the above simpler case of approximately Euclidean metrics.
\\

A straightforward ALE extension of the AE result of \cite{x-ray-transform} provides the existence of a \textit{radial gauge} on any ALE manifold. 
\begin{lemma}[{\cite[Lemma 2.2]{x-ray-transform}}]
    Let $(M,g)$ be an ALE metric which is ALE of order $\beta>1$. 
    
    Then, there exists ALE coordinates of order $\beta$ which are \emph{radial}. More explicitly, there exists a compact $K\subset M$, $\rho>0$ and a diffeomorphism $\Phi:M\backslash K\to (\mathbb{R}^n\backslash B_e(\rho))/\Gamma$ so that if we denote $r$ the distance to zero in $(\mathbb{R}^n/\Gamma,g_e)$,
    \begin{enumerate}
        \item $\Phi^*g = g_e+ h$ with $|\nabla_e^k h|_e = O(r^{-\beta-k})$, 
        \item $h(\partial_r,.) =0$.
    \end{enumerate}
\end{lemma}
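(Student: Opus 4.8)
The plan is to follow the scheme of \cite[Lemma 2.2]{x-ray-transform} and reduce the ALE case to the AE case by passing to a $\Gamma$-cover of the end. Concretely, starting from some given ALE coordinate chart $\Phi_0: (\mathbb{R}^n\setminus B_e(R))/\Gamma \to M\setminus K_0$ with $\Phi_0^*g = g_e + h_0$, $|\nabla_e^k h_0|_e = O(r^{-\beta-k})$, I would first pull everything back along the quotient map $\pi:\mathbb{R}^n\setminus B_e(R) \to (\mathbb{R}^n\setminus B_e(R))/\Gamma$, obtaining a $\Gamma$-invariant AE metric $\tilde g = g_e+\tilde h$ on $\mathbb{R}^n\setminus B_e(R)$ with the same decay. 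Then the construction of the radial gauge is equivariant: because $\Gamma\subset SO(n)$ commutes with the radial vector field $\partial_r$ and preserves $g_e$, the resulting change of coordinates (a flow along $\partial_r$, see below) is $\Gamma$-equivariant, hence descends to the quotient. So it suffices to do the AE construction carefully and check equivariance at each step.

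For the AE construction itself, the idea is: given the metric $g_e+h$ on $\mathbb{R}^n\setminus B_e(R)$, look for a diffeomorphism $\Psi$ obtained by following the flow of the $g$-gradient of the $g_e$-distance function $r$, appropriately reparametrized, so that in the new coordinates the radial coordinate is a $g$-distance and the metric takes the form $dt^2 + g_t$ on level sets — this is just Gauss-lemma / geodesic polar coordinates for $g$, but anchored so as to agree with the Euclidean radial structure at infinity. The first step is to solve the ODE for the integral curves of $\nabla^g r / |\nabla^g r|^2_g$ (or equivalently run the normal exponential flow from a large sphere), and show that since $h = O(r^{-\beta})$ with $\beta>1$, these curves are $C^1$-close to Euclidean rays and the induced map $\Psi$ is a diffeomorphism of a neighborhood of infinity onto a neighborhood of infinity, with $\Psi - \mathrm{id} = O(r^{1-\beta})$ and derivatives decaying accordingly (this is where $\beta>1$ is used: $\int^\infty r^{-\beta}\,dr$ converges, so the flow lines have finite displacement and one gets genuine new coordinates rather than a logarithmically drifting gauge). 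In these new coordinates $g$ has no $dr\wedge(\text{angular})$ cross terms, i.e. $h(\partial_r,\cdot)=0$, and by tracking the decay through the change of variables one checks $|\nabla_e^k h|_e = O(r^{-\beta-k})$ still holds, possibly after absorbing the $\Psi-\mathrm{id}$ error; so conclusion (1) survives and (2) is arranged by construction.

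The main obstacle is the bookkeeping of decay rates under the coordinate change: one must verify that composing with $\Psi$ (which differs from the identity by $O(r^{1-\beta})$, a quantity that \emph{grows} if $\beta<1$ and is merely bounded if $\beta=1$) does not destroy the $O(r^{-\beta-k})$ decay of $h$ and its Euclidean derivatives, and that the angular part of the new metric is still a perturbation of the round metric of the right order. This is exactly the point where the hypothesis $\beta>1$ enters essentially, and it is the reason the lemma is stated with $\beta>1$ rather than the weaker $\beta>\frac{n-2}{2}$ used elsewhere. Everything else — existence and smoothness of the flow, the diffeomorphism property near infinity, and the $\Gamma$-equivariance needed to descend to the ALE quotient — is routine once the decay estimates are in place, and I would simply cite \cite[Lemma 2.2]{x-ray-transform} for the AE statement and remark that the argument is $\Gamma$-equivariant.
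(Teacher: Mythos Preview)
Your proposal is correct and matches the paper's treatment: the paper does not give a proof but simply cites \cite[Lemma 2.2]{x-ray-transform} and remarks that the ALE case is a ``straightforward ALE extension of the AE result,'' which is exactly your strategy of lifting to the $\Gamma$-cover, applying the AE construction, and descending by equivariance.
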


Working in a radial gauge is very convenient for us as it implies better than expected decays for many tensors of interest in the integrand of the $\mathcal{W}$-functional we need to control. Without this gauge, terms involving $f$ which grow like $r^2$ are not controlled well enough.

The main result of this subsection is the following estimate.

\begin{proposition} \label{noncompact-asymptotics-with-radial-gauge}
    Consider a metric $g = g_e+h$ with $h(\partial_r,\cdot) = 0$ satisfying $|\nabla_e^kh|_e < C_k r^{-\beta-k}$ for $k$ and $C_k>0$, and $R>\rho$ large enough. Define a function $f$ fixing the weighted volume through the equality 
    \begin{equation}\label{eq: preserved weighted volume}
        e^{-f} dV_g = e^{-\frac{r^2}{4\tau}} dV_e,
    \end{equation}
    which in particular implies $f = \frac{r^2}{4\tau}+\frac{\tr_eh}{2} + O(r^{-2\beta})$, where more precisely, one has for some $C_k>0$, $k\in\{0,1,2,3\}$ independent on $r$ or $\tau$,
    \begin{equation}\label{eq:control f radial}
        r^k\left|\nabla^k\Big( f - \Big(\frac{r^2}{4\tau}+\frac{\tr_eh}{2}\Big) \Big)\right|\leqslant C_k r^{-2\beta}.
    \end{equation}
    Then, we have:
    \begin{equation}\label{eq: estimate integrand close to infty}
    \begin{aligned}
        \int_{\{r\geqslant R\}}&\left(\tau(2\Delta f -|\nabla f|^2+\mathrm{Scal})+f-n\right)\frac{e^{-f}}{(4\pi\tau)^\frac{n}{2}}dV_g = O( \tau^{1-\frac{n}{2}} R^{n-\min(2\beta+2, 3\beta)}).
    \end{aligned}
    \end{equation}
    In particular, for large $R$, this is negligible compared to $\tau^{1-\frac{n}{2}}$ provided $\beta>\max(\frac{n-2}{2},\frac{n}{3})$.
\end{proposition}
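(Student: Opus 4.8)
\medskip

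The plan is to expand the integrand of the $\mathcal{W}$-functional directly, using the radial gauge to harvest the improved decay rates. First I would establish the claimed expansion \eqref{eq:control f radial} for $f$: taking logarithms in \eqref{eq: preserved weighted volume} gives $f = \tfrac{r^2}{4\tau} + \tfrac12\log\det(g_e^{-1}g)$, and since $g = g_e + h$ with $h$ small at infinity, the expansion $\tfrac12\log\det(I + g_e^{-1}h) = \tfrac12\tr_e h + O(|h|^2) = \tfrac12\tr_e h + O(r^{-2\beta})$ holds, with the analogous bounds on derivatives coming from $|\nabla_e^k h|_e = O(r^{-\beta-k})$ and the chain rule. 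This is routine. The key reduction is then: write $f = \tfrac{r^2}{4\tau} + \varphi$ where $\varphi = \tfrac12\tr_e h + O(r^{-2\beta})$ satisfies $r^k|\nabla^k\varphi| \lesssim r^{-\beta}$, and substitute into $\tau(2\Delta f - |\nabla f|^2 + \Scal) + f - n$. The term $\tfrac{r^2}{4\tau}$ is the Euclidean Gaussian piece, which exactly cancels the $-n$ and contributes the known Euclidean identity $2\Delta_e(\tfrac{r^2}{4\tau}) - |\nabla_e(\tfrac{r^2}{4\tau})|^2_e + \tfrac{r^2}{4\tau} - n = 0$ up to lower-order corrections coming from the difference between $g$-operators and $e$-operators acting on $\tfrac{r^2}{4\tau}$.

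\medskip

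The heart of the matter is to track the sizes of the remaining terms. I would organize them as follows. (i) The cross terms $\tau\big(2(\Delta_g - \Delta_e)\tfrac{r^2}{4\tau} - (|\nabla_g|^2 - |\nabla_e|^2)\tfrac{r^2}{4\tau}\big)$: here the gauge condition $h(\partial_r,\cdot) = 0$ is exactly what forces the naively-$O(r^{-\beta})$ contributions (which would be fatal, since $\tau \cdot \tfrac{r}{\tau}\cdot r^{-\beta-1}\cdot r^{?}$ integrated over $\{r\geq R\}$ against $e^{-r^2/4\tau}$ is too large) to vanish to leading order, leaving $O(r^{-\beta-1}\cdot\tfrac{r}{\tau}\cdot \tau) = O(r^{-\beta})$ replaced by genuinely smaller quantities — more precisely the radial gauge gives $g(\partial_r,\partial_r) = 1$ and $\Gamma$-symbols contracted with $\partial_r$ that improve decay, so these terms become $O(\tau^{0}r^{-2\beta})$ or $O(\tfrac{r}{\tau}r^{-\beta-1}\cdot r) $-type with an extra power of $r^{-\beta}$ or an extra $\tau^{-1}$. (ii) The terms linear in $\varphi$: $\tau(2\Delta_g \varphi - 2\langle\nabla f,\nabla\varphi\rangle + \cdots) + \varphi$, where $\varphi = O(r^{-\beta})$, $\nabla\varphi = O(r^{-\beta-1})$, and crucially $\langle\nabla(\tfrac{r^2}{4\tau}),\nabla\varphi\rangle = \tfrac{r}{2\tau}\partial_r\varphi$; since in the radial gauge $\tr_e h$ has a radial derivative controlled by $r^{-\beta-1}$, this is $O(\tfrac{r}{\tau}r^{-\beta-1}) = O(\tfrac1\tau r^{-\beta})$, and multiplied by $\tau$ gives $O(r^{-\beta})$. (iii) The scalar curvature term $\tau\Scal_g = O(\tau r^{-\beta-2})$. (iv) Pure $\varphi$-quadratic terms of size $O(r^{-2\beta})$ and $O(\tau r^{-2\beta-2})$.

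\medskip

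Having bounded the integrand pointwise by (a sum of) terms of the schematic form $\tau \cdot r^{-\min(2\beta+2,3\beta)} $ plus $r^{-\beta}$-type terms that are actually killed or improved by the gauge, the final step is the integration: $\int_{\{r\geq R\}} r^{-a}\, e^{-r^2/4\tau}\,(4\pi\tau)^{-n/2}\,dV_e$. Splitting $dV_e = r^{n-1}\,dr\,d\omega$ over $\mathbb{R}^n/\Gamma$ and using that for $a > n$ one has $\int_R^\infty r^{n-1-a}e^{-r^2/4\tau}\,dr \leq \int_R^\infty r^{n-1-a}\,dr \lesssim R^{n-a}$, while the prefactor $(4\pi\tau)^{-n/2}$ combined with the leftover $\tau$ from $\tau\Scal$ etc. produces exactly the $\tau^{1-n/2}$. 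This yields $O(\tau^{1-\frac n2}R^{n-\min(2\beta+2,3\beta)})$, and the condition $\beta > \max(\tfrac{n-2}{2},\tfrac n3)$ is precisely what makes the exponent $n - \min(2\beta+2,3\beta)$ negative so that the bound tends to $0$ as $R\to\infty$ and is $o(\tau^{1-n/2})$.

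\medskip

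\textbf{Main obstacle.} The delicate point is step (i): showing that the radial-gauge identities genuinely remove the borderline $O(r^{-\beta})$-contributions of the cross terms rather than merely bounding them. Without $h(\partial_r,\cdot)=0$, the term $\tau \cdot (\Delta_g - \Delta_e)\tfrac{r^2}{4\tau}$ involves $\tfrac{r}{2\tau}\cdot g^{ij}\Gamma_{ij}^k\partial_k r \cdot \tau$, which is $O(r^{-\beta})$ in a general gauge and integrates to $O(\tau^{1-n/2}R^{n-\beta})$ — not summable-to-zero unless $\beta > n$. The radial gauge forces $\Gamma_{rr}^k = 0$ and related contractions with $\partial_r$ to vanish or decay better, upgrading the exponent to $\min(2\beta+2,3\beta)$. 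Carefully bookkeeping these Christoffel-symbol cancellations in the presence of the $\tfrac{r^2}{4\tau}$ weight — and confirming that every surviving term really does carry either an extra $r^{-\beta}$ or an extra $\tau^{-1}$ — is the technical crux, and is the reason the paper emphasizes the need for the \cite{x-ray-transform} gauge.
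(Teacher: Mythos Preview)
There is a genuine gap in your handling of the first-order (linear-in-$h$) contributions. Your items (ii) and (iii) leave you with terms of size $\tau\,\Scal_g = O(\tau r^{-\beta-2})$ and $2\tau\Delta_g\varphi = O(\tau r^{-\beta-2})$ individually, and integrating either one over $\{r\geq R\}$ against $(4\pi\tau)^{-n/2}e^{-f}\,dV_g$ gives $O(\tau^{1-n/2}R^{\,n-\beta-2})$. For $\beta\in(\tfrac{n-2}{2},n-2)$ the exponent $n-\beta-2$ is positive, so this is \emph{not} $o(\tau^{1-n/2})$ and does not meet the claimed bound. Pointwise Christoffel-symbol cancellations in the radial gauge --- the mechanism you propose in your ``Main obstacle'' paragraph --- do not repair this: these linear terms genuinely have only $r^{-\beta-2}$ decay pointwise.

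The paper's argument is organized differently and avoids this trap via two integrations by parts. It Taylor-expands the full integrand in powers of $h$ around the Gaussian soliton. The first-order part of the integrand is identified (using Lott's variation formula and $\Ric+\Hess f-\tfrac{g}{2\tau}=0$ at the soliton) as $\tau\,\div_f\div_f h$; integrating by parts turns the volume integral into the boundary term $\int_{\{r=R\}}\div_f(h)(\partial_r)\,dA$, and \emph{this} is where the radial gauge is decisive: $h(\partial_r,\cdot)=0$ forces $\div_f(h)(\partial_r)=0$, so the entire first-order contribution to the integral vanishes exactly rather than merely being small. At second order one meets $\langle h,N_f(h)\rangle = -\tfrac12\langle h,\Delta_f h\rangle - \langle h,\div_f^*\div_f h\rangle$; the second summand has $r^{-2\beta-2}$ decay directly, but the first again needs an integration by parts to $|\nabla h|^2$ plus a boundary term $\langle h,\nabla_{\partial_r}h\rangle$, both of which are $O(r^{-2\beta-2})$ or better. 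Only the cubic-and-higher remainder is estimated pointwise, and that is the source of the $r^{-3\beta}$.

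In short: the mechanism you need is not that the radial gauge improves pointwise decay of the integrand from $r^{-\beta-2}$ to $r^{-\min(2\beta+2,3\beta)}$, but that the linear part is a weighted double divergence whose integral reduces to a boundary term that the radial gauge annihilates, and the quadratic part likewise needs one IBP before it has the required decay.
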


The first steps of the proof is to estimate the first and second variations of the integrand, seeing $g$ as a small perturbation of $g_e$. We start with the classical first variation.
\\

    At a general metric $g = g_e+k$ in radial gauge, i.e. $k(\partial_r,\cdot)=0$, and any function $f$, the first variation of $\tau(-2\Delta f +|\nabla f|^2-\scal)-f$ in a radial direction $g' = h$ with $h(\partial_r,\cdot)=0$ and $f'=\frac{\tr_g h}{2}$ is
    $$ \tau\left( \langle h,\Ric+\Hess f\rangle - \div_f\div_fh \right) - \frac{\tr h}{2},$$
    where every operation is with respect to $g=g_e+k$. This can be found in \cite[(3.22)]{lott}, for instance.

    This means that for any $R>\rho$, the first variation of 
    $\int_{\{r\geqslant R\}}\left(\tau(-2\Delta f +|\nabla f|^2-\scal)-f+n\right)\frac{e^{-f}}{(4\pi\tau)^\frac{n}{2}}dV_g $
    is 
    \begin{equation}
        \begin{aligned}
            \frac{\tau}{(4\pi\tau)^\frac{n}{2}}&\int_{\{r\geqslant R\}}\left( \left\langle h,\Ric+\Hess f-\frac{g}{2\tau}\right\rangle - \div_f\div_fh \right)e^{-f}dV_g \\
    &= \frac{\tau}{(4\pi\tau)^\frac{n}{2}}\int_{\{r\geqslant R\}}\left\langle h,\Ric+\Hess f-\frac{g}{2\tau} \right\rangle e^{-f}dV_g -\frac{\tau}{(4\pi\tau)^\frac{n}{2}}\int_{r= R} \div_f(h)(\partial_r) dA_g\\
    &=\frac{\tau}{(4\pi\tau)^\frac{n}{2}}\int_{\{r\geqslant R\}}\left\langle h,\Ric+\Hess f-\frac{g}{2\tau} \right\rangle e^{-f}dV_g 
        \end{aligned}
    \end{equation}
    In the last line, we used that for \textit{any} metric $g$, and \textit{any} function $f$, if $h(\partial_r,\cdot)=0$,
    $$\div^{g}_f(h)(\partial_r) = g^{jk}\nabla_{j}h_{k0}-h(\nabla^{g}f,\partial_r) =0.$$
\begin{remark}
    The first variation vanishes at the Gaussian soliton itself.
\end{remark}

We may now give the formula for the second variation of the integrand. 

\begin{lemma}
    Then, the first variation of 
    $$\frac{\tau}{(4\pi\tau)^\frac{n}{2}}\int_{\{r \geqslant R\}}\left\langle h,\Ric+\mathrm{Hess} f-\frac{g}{2\tau} \right\rangle e^{-f}dV_g$$
    at the Gaussian soliton in the direction $g'=h$ radial and $f' = \frac{\tr h}{2}$ is
    \begin{equation}
        \frac{\tau}{(4\pi\tau)^\frac{n}{2}}\int_{\{r\geqslant R\}}\left\langle h,N_f(h)\right\rangle e^{-f}dV_g,
    \end{equation}
    where $N_f(h):=-\frac{1}{2}\Delta_f h-\div_f^*\div_f(h)$.
\end{lemma}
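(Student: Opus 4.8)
The plan is to compute the first variation of the functional
\[
G(g,f) := \frac{\tau}{(4\pi\tau)^{\frac{n}{2}}}\int_{\{r\geqslant R\}}\Big\langle h,\ \Ric_g+\Hess_g f-\tfrac{g}{2\tau}\Big\rangle_g\, e^{-f}\,dV_g
\]
(here $h$ is viewed as a \emph{fixed} radial symmetric $2$-tensor, the "test direction") along the path $g_s = g_e + s\,h$, $f_s = f + s\,\tfrac{\tr_e h}{2}$, and evaluate at $s=0$, i.e. at the Gaussian soliton $(g_e,f_{\mathrm{gauss}})$ with $f_{\mathrm{gauss}}=\tfrac{r^2}{4\tau}$. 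Since we are differentiating a pairing, I would split the computation into three pieces by Leibniz: (i) the variation of the tensor $T_g := \Ric_g+\Hess_g f-\tfrac{g}{2\tau}$, (ii) the variation of the metric $g$ appearing in the inner product $\langle\cdot,\cdot\rangle_g$ and in raising indices, and (iii) the variation of the weighted measure $e^{-f}\,dV_g$.

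The key observation that makes the terms (ii) and (iii) drop out is that $T_g$ \emph{vanishes identically at the Gaussian soliton}: $\Ric_{g_e}=0$, $\Hess_{g_e}f_{\mathrm{gauss}} = \tfrac{g_e}{2\tau}$, so $T_{g_e}=0$. Hence any term in which $T_g$ is not itself differentiated contributes zero at $s=0$. Therefore the entire first variation of $G$ at the Gaussian soliton reduces to
\[
\frac{\tau}{(4\pi\tau)^{\frac{n}{2}}}\int_{\{r\geqslant R\}}\Big\langle h,\ \tfrac{d}{ds}\big|_{s=0}\big(\Ric_{g_s}+\Hess_{g_s}f_s-\tfrac{g_s}{2\tau}\big)\Big\rangle_{g_e}\, e^{-f_{\mathrm{gauss}}}\,dV_e .
\]
Now I plug in the standard linearized-Ricci formula. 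With $g'=h$ and $f'=\tfrac{\tr h}{2}$ (all operations at $g_e$), the linearization of $\Ric$ is $\Ric'(h) = -\tfrac12\Delta_L h - \div^*\div h - \tfrac12\Hess(\tr h)$ (Lichnerowicz Laplacian convention), the linearization of $\Hess f$ in the direction with $f'=\tfrac{\tr h}{2}$ is $\Hess(\tfrac{\tr h}{2}) + (\Hess f)'(h)$ where the correction comes from varying the Christoffel symbols against $df_{\mathrm{gauss}}$, and the linearization of $-\tfrac{g}{2\tau}$ is $-\tfrac{h}{2\tau}$. Collecting and using the weighted/Bakry–Émery bookkeeping relative to $f_{\mathrm{gauss}}$ — i.e. rewriting $\Delta$, $\div$, $\Hess$ in terms of $\Delta_f = \Delta - \nabla f\cdot\nabla$, $\div_f = \div - \iota_{\nabla f}$, and its adjoint $\div_f^*$ with respect to $e^{-f}dV$ — the combination of all these terms should telescope, after the Bianchi-type identities and the soliton identity $\Ric+\Hess f = \tfrac{g}{2\tau}$, into exactly $-\tfrac12\Delta_f h - \div_f^*\div_f h =: N_f(h)$. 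This is the same computation that appears in the stability analysis of shrinking solitons (Cao–Hamilton–Ilmanen, Cao–Zhu); the point here is simply that it localizes on $\{r\geqslant R\}$ with no boundary contribution because $T_g\equiv 0$ on the Gaussian soliton kills all would-be boundary terms coming from integration by parts against the undifferentiated factor.

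The main obstacle, and the step requiring genuine care rather than bookkeeping, is twofold: first, correctly accounting for the variation of $\Hess_{g_s}f_s$, since the Hessian depends on $g_s$ both through the metric in the formula and through the Christoffel symbols, and the $\tfrac{r^2}{4\tau}$ piece of $f$ grows quadratically so one must confirm the $\nabla f$-weighted terms assemble into the Bakry–Émery operators and do not leave uncontrolled residue — this is exactly why the radial gauge $h(\partial_r,\cdot)=0$ is imposed, guaranteeing $\div_f h(\partial_r)=0$ and hence that the single integration by parts producing $\div_f^*\div_f h$ generates a boundary term on $\{r=R\}$ of the form $\div_f(h)(\partial_r)$ that vanishes; second, checking that the trace-terms (the $\tfrac12\Hess(\tr h)$ from $\Ric'$ and the $\Hess(\tfrac{\tr h}{2})$ from $f'$) cancel rather than add, which fixes the sign and the absence of a Lie-derivative term in $N_f$. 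Once these cancellations are verified the formula $N_f(h) = -\tfrac12\Delta_f h - \div_f^*\div_f h$ follows.
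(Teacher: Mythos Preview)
Your approach is correct and is essentially the same as the paper's: the paper simply cites Hall--Murphy for the second variation formula at a general shrinking soliton, which gives $N_f(h) = -\tfrac{1}{2}\Delta_f h - \mathrm{Rm}(h) - \div_f^*\div_f(h)$, and then observes that $\mathrm{Rm}(h)=0$ at the flat Gaussian soliton. Your key observation---that $T_g := \Ric + \Hess f - \tfrac{g}{2\tau}$ vanishes identically at the Gaussian soliton, so that only the variation of $T_g$ itself survives---is exactly the mechanism behind the Hall--Murphy (and Cao--Hamilton--Ilmanen, Cao--Zhu) computation.

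One point of confusion to flag: your discussion of the radial gauge and boundary terms is misplaced for \emph{this} lemma. The identity $T_g' = N_f(h)$ at the soliton is a \emph{pointwise} computation on the integrand; the operator $\div_f^*$ is the formal (differential) adjoint, not an object produced by integration by parts, so no boundary term on $\{r=R\}$ arises here. The radial gauge and the vanishing of $\div_f(h)(\partial_r)$ were already used in the \emph{previous} step of the paper---when integrating by parts the $\div_f\div_f h$ term in the first variation of the $\mathcal{W}$-integrand---and will be used again \emph{after} this lemma, when integrating $\langle h, \Delta_f h\rangle e^{-f}$ by parts. For the present lemma the domain $\{r\geqslant R\}$ plays no role beyond restricting the integral.
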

\begin{proof}
    The variation of $\frac{\tau}{(4\pi\tau)^\frac{n}{2}}\int_M\left\langle h,\Ric+\Hess f-\frac{g}{2\tau} \right\rangle e^{-f}dV_g$ at a soliton is computed by Hall-Murphy in \cite{hall-murphy}: it is $\frac{\tau}{(4\pi\tau)^\frac{n}{2}}\int_M\left\langle h,N_f(h)\right\rangle e^{-f}dV_g$ with
    $$N_f(h):=-\frac{1}{2}\Delta_f h - \mathrm{Rm}(h)-\div_f^*\div_f(h).$$
    At the Gaussian soliton, $\mathrm{Rm}(h)$ vanishes.
\end{proof}

Now, the term $\langle \div_f^*\div_f(h),h\rangle$ decays suitably as $r^{-2\beta-2}$ in radial gauge. On the other hand, this is not necessarily true for the term $\langle\Delta_f h,h\rangle$. We will deal with this term thanks to an integration by parts:
    \begin{equation}\label{eq:IBP Deltaf 2 tensors}
        -\frac{1}{2}\frac{\tau}{(4\pi\tau)^\frac{n}{2}}\int_{\{r\geqslant R\}}\left\langle h,\Delta_f h\right\rangle e^{-f}dV_g = \frac{1}{2}\frac{\tau}{(4\pi\tau)^\frac{n}{2}}\int_{\{r\geqslant R\}}|\nabla h|^2e^{-f}dV_g - \frac{1}{2}\frac{\tau}{(4\pi\tau)^\frac{n}{2}}\int_{\{r= R\}}\left\langle h,\nabla_{\partial_r} h\right\rangle e^{-f}dA_g,
    \end{equation}
where the boundary term involves $\left\langle h,\nabla_{\partial_r} h\right\rangle=O(R^{-2\beta-1})$ integrated over a sphere of volume $R^{n-1}$, so as long as $R$ is large and $\beta>\frac{n-2}{2}$, it will be negligible in our context. Similarly, since $|\nabla h|^2 = O(r^{-2\beta-2})$, the first term of the right hand side is also negligible for large $R$.

We are left with the higher order terms in our expansion.

\begin{lemma}
    The higher order terms decay suitably, namely, for $Q(h,f) =O(r^{-3\beta})$ explicated in the proof,
    \begin{equation}\label{eq:expansion W outside compact}
    \begin{aligned}
        \int_{\{r\geqslant R\}}&\left(\tau(2\Delta f -|\nabla f|^2+\scal)+f-n\right)\frac{e^{-f}}{(4\pi\tau)^\frac{n}{2}}dV_g\\
        =&\, -\frac{1}{2}\frac{\tau}{(4\pi\tau)^\frac{n}{2}}\int_{\{r\geqslant R\}}|\nabla h|^2e^{-f}dV_g- \frac{1}{2}\frac{\tau}{(4\pi\tau)^\frac{n}{2}}\int_{\{r= R\}}\left\langle h,\nabla_{\partial_r} h\right\rangle e^{-f}dA_g \\
        &+\frac{\tau}{(4\pi\tau)^\frac{n}{2}}\int_{\{r\geqslant R\}}\langle \div_f^*\div_f(h),h\rangle e^{-f}dV_g\\
        &+\int_{\{r\geqslant R\}} Q(h,f) e^{-f}dV_g.
    \end{aligned}
    \end{equation}
\end{lemma}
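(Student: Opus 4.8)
The plan is to perform a Taylor expansion of the integrand
$\mathcal{I}(g,f):=\tau(2\Delta f-|\nabla f|^2+\scal)+f-n$ around the Gaussian soliton
$(g_e, f_0)$, where $f_0=\frac{r^2}{4\tau}$, along the path $g_s=g_e+sh$ with $f_s$ determined by the weighted-volume constraint \eqref{eq: preserved weighted volume}, so $f_s'=\frac{\tr_{g_s}h}{2}$. By Taylor's theorem with integral remainder,
\begin{equation}
    \int_{\{r\geqslant R\}}\mathcal{I}(g,f)\frac{e^{-f}}{(4\pi\tau)^{\frac{n}{2}}}dV_g
    = \Big[\text{value at }s=0\Big] + \Big[\text{first variation at }s=0\Big] + \int_0^1 (1-s)\,\frac{d^2}{ds^2}\Big|_{s}\Big(\cdots\Big)\,ds .
\end{equation}
The value at $s=0$ is $\mathcal{W}(\Gamma_\tau,g_e,\tau)$ restricted to $\{r\geqslant R\}$ after the volume normalization, which by the computation recalled in the previous subsection contributes only $O(\tau^{\varepsilon n-\frac{n}{2}})$-type tail terms absorbed into the stated error; the first variation vanishes at the Gaussian soliton, as noted in the Remark. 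The second-variation term, by the two lemmas just proved, equals
$\frac{\tau}{(4\pi\tau)^{\frac n2}}\int_{\{r\geqslant R\}}\langle h, N_{f}(h)\rangle e^{-f}dV$ evaluated at $s=0$, which after the integration by parts \eqref{eq:IBP Deltaf 2 tensors} produces exactly the $|\nabla h|^2$, boundary, and $\div_f^*\div_f$ terms written in \eqref{eq:expansion W outside compact}. The point of the lemma is then that \emph{everything else} — the remainder beyond second order, together with the discrepancy coming from evaluating the second variation along $g_s$ rather than exactly at $s=0$, and the lower-order corrections in \eqref{eq:control f radial} relating $f$ to $\frac{r^2}{4\tau}+\frac{\tr_e h}{2}$ — is collected into the single term $\int_{\{r\geqslant R\}}Q(h,f)e^{-f}dV_g$.

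To prove the claimed decay $Q(h,f)=O(r^{-3\beta})$, I would expand $\mathcal{I}(g,f)$ schematically in powers of $h$ and its covariant $g_e$-derivatives and in powers of $(f-f_0)$ and its derivatives. The key structural facts are: (i) $g^{-1}=g_e^{-1}-h+h\ast h+\cdots$, $dV_g=(1+\tfrac12\tr_e h+\cdots)dV_e$, and the Christoffel symbols of $g$ differ from those of $g_e$ by $O(r^{-\beta-1})$ schematically $\Gamma(g)-\Gamma(g_e)=g^{-1}\ast\nabla_e h$; (ii) by \eqref{eq:control f radial}, $f-\big(\tfrac{r^2}{4\tau}+\tfrac{\tr_e h}{2}\big)$ and its first derivatives are $O(r^{-2\beta})$, and crucially $\nabla f = \tfrac{r}{2\tau}\partial_r + O(r^{-\beta-1})+O(\tau^{-1}r^{1-?})$-type terms; in the radial gauge $h(\partial_r,\cdot)=0$ the dangerous cross term $\langle\nabla f, \text{(linear in }h)\rangle$ that would otherwise carry a factor $r/\tau$ and only decay like $r^{-\beta}$ (hence fail to be summable against $r^{n-1}$ once $\beta<n-2$, and worse, carry the wrong $\tau$ power) is killed, because contracting the radial vector $\partial_r$ into $h$ or into $\div_f h$ vanishes — this is precisely why the radial gauge was imposed. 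After these cancellations, every genuinely nonlinear-in-$h$ term that survives is at least cubic, i.e. of the form $h\ast h\ast h$, $h\ast h\ast\nabla_e h$, $h\ast\nabla_e h\ast\nabla_e h$, $(\nabla_e h)^{\ast 2}\ast h$, or involves the $O(r^{-2\beta})$ correction of $f$ multiplied by something $O(r^{-\beta})$; counting weights, each such term is $O(r^{-3\beta})$ (the worst among $r^{-3\beta}$, $r^{-2\beta-(\beta+1)}\leqslant r^{-3\beta}$, etc.), and the accompanying $\tau$-dependence is either $\tau^0$ or better than $\tau^{-\frac n2}$ after multiplying by $\tau$ and $(4\pi\tau)^{-\frac n2}$. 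I would write $Q(h,f)$ as the explicit (finite) sum of these schematic terms.

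Finally, integrating term by term over $\{r\geqslant R\}$ against $e^{-f}dV_g$: since $e^{-f}\leqslant e^{-r^2/(8\tau)}$ up to a constant and $r^{-3\beta}$ times the volume element $r^{n-1}dr$ over $[R,\infty)$ gives, after the substitution $r=\sqrt\tau\, s$, a factor $\tau^{\frac n2-\frac{3\beta}{2}}$ against a convergent Gaussian integral \emph{when $R\lesssim\sqrt\tau$} — but here we want the estimate for fixed large $R$ with $\tau\to\infty$, so instead I bound $e^{-f}\leqslant 1$ and integrate $r^{-3\beta}r^{n-1}$ directly over $[R,\infty)$, which converges (as $3\beta>n$, guaranteed by $\beta>\frac n3$) to $O(R^{n-3\beta})$, while the overall prefactor is $\frac{\tau}{(4\pi\tau)^{\frac n2}}=O(\tau^{1-\frac n2})$ for the quadratic-type terms and even smaller for the genuinely cubic ones; the boundary term $\langle h,\nabla_{\partial_r}h\rangle$ over $S_R$ contributes $O(\tau^{1-\frac n2}R^{n-2\beta-2})$. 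Taking the worst exponent gives the stated $O(\tau^{1-\frac n2}R^{n-\min(2\beta+2,3\beta)})$, and for $\beta>\max(\frac{n-2}{2},\frac n3)$ both $n-2\beta-2<0$ and $n-3\beta<0$, so the whole thing is $o(\tau^{1-\frac n2})$ as $R\to\infty$. \textbf{The main obstacle} is bookkeeping: organizing the Taylor remainder so that one can \emph{see}, rather than merely assert, that no term escapes the radial-gauge cancellation with a factor of $r^2/\tau$ attached — i.e. verifying that every place where $\nabla f$ or $f$ itself (which grows like $r^2/\tau$) meets a linear-in-$h$ factor either vanishes by $h(\partial_r,\cdot)=0$ or is exactly the piece already extracted into the $\div_f^*\div_f$ and $\Delta_f$ terms of \eqref{eq:expansion W outside compact}. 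I would handle this by first reducing, using \eqref{eq:control f radial}, to the model function $\bar f=\tfrac{r^2}{4\tau}+\tfrac{\tr_e h}{2}$ (the difference being manifestly $O(r^{-2\beta})$ and lower order), and then expanding with $\bar f$, where the radial structure of $\nabla\bar f=\tfrac r{2\tau}\partial_r+\tfrac12\nabla(\tr_e h)$ makes the cancellations transparent.
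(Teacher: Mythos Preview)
Your proposal is essentially correct and follows the same strategy as the paper: expand the integrand around the Gaussian soliton, use that the zeroth and first variations vanish, identify the second variation as the $N_f(h)$ terms (already integrated by parts in \eqref{eq:IBP Deltaf 2 tensors}), and bound the cubic-and-higher remainder by $O(r^{-3\beta})$ using the radial gauge to kill contractions of $h$ with $\partial_r$.

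Two remarks. First, the value at $s=0$ is in fact \emph{identically zero} pointwise (a direct check with $f_0=r^2/4\tau$ on $g_e$ gives $\tau(2\cdot\tfrac{n}{2\tau}-\tfrac{r^2}{4\tau^2})+\tfrac{r^2}{4\tau}-n=0$), so there is no tail contribution to absorb. Second, the paper streamlines the bookkeeping you flag as the ``main obstacle'' by two devices you do not use: (i) since $e^{-f}dV_g = \Gamma_\tau^2\,dV_e$ exactly, the weighted volume form is genuinely $h$-independent and one can focus purely on the scalar integrand; (ii) rather than estimating $\Delta f$ and $|\nabla f|^2$ separately, the paper groups them as $\Delta_f f = \Delta f - |\nabla f|^2$, which in coordinates reads
\[
\Delta_f f = \frac{1}{\Gamma_\tau^2\sqrt{\det g_e}}\,\partial_i\!\left(\Gamma_\tau^2\sqrt{\det g_e}\;g^{ij}\partial_j f\right),
\]
with the prefactor $\Gamma_\tau^2\sqrt{\det g_e}$ manifestly independent of $h$. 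This makes the cubic terms in $\Delta_f f$ transparently $O(r^{-3\beta})$ without having to track cancellations between the two pieces. Your Taylor-with-integral-remainder packaging works too, but the paper's direct expansion at $g=g_e+h$ (which is already affine in $s$) avoids the extra step of controlling the $s$-dependence of the second variation.
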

\begin{proof}
    Note first that by construction of $f$, the volume form $e^{-f}dV_g$ is assumed independent of $h$, so we can focus on the other terms.

    In the expression of $Q(h,f)$, we find all of the terms in the expansion of $(2\Delta f -|\nabla f|^2+\scal)+f-n$ at $g = g_e+ h$ which are at least cubic in $h$, where we recall that $e^{-f}dV_g = \Gamma_\tau^2 dV_e$, which more explicitly in coordinates means that $ e^{-f} \sqrt{\det(g_e+ h)} = \Gamma_\tau^2 \sqrt{\det(g_e)}$, hence
    $$ f = \frac{r^2}{4\tau} + \log\left(\sqrt{\frac{\det(g_e+ h)}{\det(g_e)}}\right). $$
    Consequently, as observed before, expanding the determinant term, the linear term in the expansion of $f$ in $h$ is $\frac{\tr_eh}{2}$, and we have the control \eqref{eq:control f radial}. This takes care of the terms coming from the third variations of $f$, which yield the worst estimates in $r^{-3\beta}$.

    


    The third order perturbation of the scalar curvature classically only involves terms of the schematic form $h*\nabla h*\nabla h+h*h*\nabla^2 h = O(r^{-3\beta-2})$. 
    \\

    For the term $|\nabla f|^2$, we have the expression $\nabla_g f = (g_e+h)^{ij}d\left(\frac{r^2}{4\tau} + \frac{\tr h}{2} + O(r^{-2\beta}) \right)$ as well as $|\nabla_g f|_g^2 = (g_e+h)(\nabla_g f,\nabla_g f)$. We find third order terms in $r^{-3\beta-2}$ using the radial condition on $h$. 
    \\

    In order to deal with the Laplacian term, we consider the more convenient combination $\Delta f - |\nabla f|^2 = \Delta_f f$. In coordinates, using the above equality $e^{-f} \sqrt{\det(g_e+ h)} = \Gamma_\tau^2 \sqrt{\det(g_e)}$, $\Delta_f f$ has the expression
    $$\Delta_f f = \frac{1}{\Gamma_\tau^2 \sqrt{\det(g_e)}} \partial_i\left( \Gamma_\tau^2 \sqrt{\det(g_e)} g^{ij}\partial_j f \right), $$
    where the simplification is that $\Gamma_\tau^2 \sqrt{\det(g_e)}$ is independent on $h$. We find that the third and higher order terms in the expansion of $\Delta_f f$ are in $ O(r^{-3\beta})$.
\end{proof}

\subsubsection{Perelman's \(\mu\)-functional on ALE metrics}
Using the radial gauge introduced in \S\ref{existence-of-radial-gauge}, we prove Theorem \ref{large-scale-behavior-of-mu} in full generality by a method analogous to that of Theorem \ref{mu-ALE-is-negative-for-gR} in the sense that we construct a function \(u_\tau\) which satisfies \(||u_\tau||^2_{L^2(dV_g)}=\alpha_\tau\) and an analogue of \eqref{eq:W-lambda-ALE-inequality-for-gR}. Although the natural choice of \(u_\tau\) is (the normalization of) \eqref{eq:constructed-minimizer-for-gR}, this introduces some problems in the setting of a general ALE metric.

Using $f$ defined as above by the property $e^{-f}dV_{g_e+h} = \Gamma_\tau^2dV_e$ close to infinity, we instead define \(\tilde{u}_\tau\) as
\begin{equation} \label{eq:constructed-minimizer-for-general-metric}
    \tilde{u}_\tau :=
    \begin{cases}
        u_\infty & B_{\tau^\varepsilon} \\ \chi_\tau u_\infty + (1-\chi_\tau)e^{-f/2} & A(\tau^\varepsilon, 3\tau^\varepsilon) \\ e^{-f/2} & M\setminus B_{3\tau^\varepsilon},
    \end{cases}
\end{equation}
where $f$ is the function constructed using the radial gauge $h$ in Proposition \ref{noncompact-asymptotics-with-radial-gauge}. The proof of Lemma \ref{normalizing-constant-asymptotics} in the case of an arbitrary ALE metric is almost analogous to that of the $g_R$ metrics.
\begin{lemma} \label{normalization-interpolation-g}
    Let \(c_\tau\) be the constant which satisfies \(||c_\tau\tilde{u}_\tau||^2_{L^2(dV_g)} =\alpha_\tau\). Then
    \begin{equation}
        c_\tau^2 = 1 + O(\tau^{\varepsilon n - \frac{n}{2}}).
    \end{equation}

    \begin{proof}
        Since $\tilde{u}_\tau^2 dV_g = \Gamma_\tau^2 dV_e$,
        \begin{equation}
            \int_{M\setminus B_{3\tau^\varepsilon}} \tilde{u}_\tau^2 dV_g = \int_{(\R^n/\Gamma)\setminus B_{3\tau^\varepsilon}} \Gamma_\tau^2 dV_e \in [\alpha_\tau - C\tau^{\varepsilon n},\alpha_\tau],
        \end{equation}
        so the result follows from computations analogous to those in the proof of Lemma \ref{normalizing-constant-asymptotics}.
    \end{proof}
\end{lemma}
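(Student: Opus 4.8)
The plan is to follow the proof of Lemma \ref{normalizing-constant-asymptotics} line by line, splitting $\int_M \tilde{u}_\tau^2\,dV_g$ into the three regions $B_{\tau^\varepsilon}$, $A(\tau^\varepsilon,3\tau^\varepsilon)$, and $M\setminus B_{3\tau^\varepsilon}$ that match the three branches of \eqref{eq:constructed-minimizer-for-general-metric}, and to bound each contribution. The only genuinely new feature compared with the approximately Euclidean case is the outer region: there $\tilde{u}_\tau = e^{-f/2}$ and the defining property $e^{-f}\,dV_g = \Gamma_\tau^2\,dV_e$ of $f$ (from \eqref{eq: preserved weighted volume}) converts the $g$-volume integral into an \emph{exact} Euclidean Gaussian integral, which is precisely what makes the argument ``almost analogous'' to that of Lemma \ref{normalizing-constant-asymptotics}. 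Once the three pieces are bounded, dividing by $\alpha_\tau = (4\pi\tau)^{n/2}/|\Gamma|\sim \tau^{n/2}$ produces the claimed error $O(\tau^{\varepsilon n - n/2})$.

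For the outer region I would use $\tilde{u}_\tau^2\,dV_g = e^{-f}\,dV_g = \Gamma_\tau^2\,dV_e$ to write
\[
\int_{M\setminus B_{3\tau^\varepsilon}} \tilde{u}_\tau^2\,dV_g = \int_{(\R^n/\Gamma)\setminus B_{3\tau^\varepsilon}} \Gamma_\tau^2\,dV_e = \alpha_\tau - \int_{B_{3\tau^\varepsilon}} \Gamma_\tau^2\,dV_e,
\]
where the last equality uses $\int_{\R^n/\Gamma}\Gamma_\tau^2\,dV_e = \alpha_\tau$. Since $0\le \Gamma_\tau \le 1$ and the Euclidean volume of $B_{3\tau^\varepsilon}$ is $O(\tau^{\varepsilon n})$, the subtracted integral lies in $[0, C\tau^{\varepsilon n}]$, so the outer contribution lies in $[\alpha_\tau - C\tau^{\varepsilon n},\,\alpha_\tau]$, exactly as in Lemma \ref{normalizing-constant-asymptotics}.

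It remains to bound the inner two regions by $O(\tau^{\varepsilon n})$, which forces me to check that the integrand is uniformly bounded and that $\mathrm{Vol}_g$ counts like its Euclidean counterpart. On $B_{\tau^\varepsilon}$, $\tilde{u}_\tau = u_\infty$ is bounded (by Proposition \ref{energy-minimizers-for-positive-scalar-curvature}, $c \le u_\infty \le 1$), and since $g$ is ALE of order $\beta$ one has $dV_g = (1+O(r^{-\beta}))\,dV_e$ near infinity, so $\mathrm{Vol}_g(B_{\tau^\varepsilon}) = O(\tau^{\varepsilon n})$ and $0\le \int_{B_{\tau^\varepsilon}} u_\infty^2\,dV_g \le C_1\tau^{\varepsilon n}$. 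On the annulus $\tilde{u}_\tau = \chi_\tau u_\infty + (1-\chi_\tau)e^{-f/2}$, and here the one point requiring care is the uniform control of $e^{-f/2}$: using the expansion $f = \frac{r^2}{4\tau} + \frac{\tr_e h}{2} + O(r^{-2\beta})$ from Proposition \ref{noncompact-asymptotics-with-radial-gauge} together with $\tau^\varepsilon \le r \le 3\tau^\varepsilon$ and $\varepsilon < \frac12$, each term is $o(1)$ (the Gaussian part is $O(\tau^{2\varepsilon-1})$ and the metric parts are $O(\tau^{-\varepsilon\beta})$), so $f\to 0$ uniformly and $e^{-f/2}$ is bounded above and below. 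Hence $\tilde{u}_\tau$ is bounded on the annulus, whose volume is again $O(\tau^{\varepsilon n})$, giving $0 \le \int_{A(\tau^\varepsilon,3\tau^\varepsilon)} \tilde{u}_\tau^2\,dV_g \le C_2\tau^{\varepsilon n}$. Summing the three bounds yields $\alpha_\tau - C\tau^{\varepsilon n} \le \int_M \tilde{u}_\tau^2\,dV_g \le \alpha_\tau + (C_1+C_2)\tau^{\varepsilon n}$; multiplying by $c_\tau^2$, using the normalization $\|c_\tau\tilde{u}_\tau\|_{L^2(dV_g)}^2 = \alpha_\tau$ and $\alpha_\tau\sim\tau^{n/2}$, gives $1 - O(\tau^{\varepsilon n - n/2}) \le c_\tau^2 \le 1 + O(\tau^{\varepsilon n - n/2})$, which is the assertion. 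The main (and only real) obstacle is the uniform boundedness of $e^{-f/2}$ on the annulus, since unlike the $g_R$ case $\tilde{u}_\tau$ is no longer literally a Gaussian there; everything else is a direct transcription of Lemma \ref{normalizing-constant-asymptotics}.
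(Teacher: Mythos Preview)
Your proposal is correct and follows essentially the same approach as the paper: the key observation is that $\tilde{u}_\tau^2\,dV_g = \Gamma_\tau^2\,dV_e$ on the outer region by the defining property of $f$, reducing that integral exactly to the Euclidean Gaussian one, after which the inner and annular contributions are bounded by $O(\tau^{\varepsilon n})$ via uniform boundedness of the integrand as in Lemma \ref{normalizing-constant-asymptotics}. You supply more detail than the paper (in particular the explicit check that $e^{-f/2}$ is uniformly bounded on the annulus using the expansion of $f$), but this is precisely the ``almost analogous'' point the paper leaves to the reader.
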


Since $u_\infty$ and $\tilde{\Gamma}_\tau$ are sufficiently close in the cutoff region (as computed explicitly below), we can approximate the energy of $\tilde{u}_\tau$ on this region by the corresponding energy of $u_\infty$.
\begin{lemma} \label{cutoff-asymptotics-for-arbitrary-ALE}
    We have the estimate
    \begin{equation}
        \int_{A(\tau^\varepsilon,3\tau^\varepsilon)} (4|\nabla \tilde{u}_\tau|^2 + \scal_g \tilde{u}_\tau^2) dV_g = \int_{A(\tau^\varepsilon,3\tau^\varepsilon)} (4|\nabla u_\infty|^2 + \scal_g u_\infty^2) dV_g + O(\tau^{\varepsilon(n-2\beta-2)}).
    \end{equation}
    \begin{proof}
        Let \(\tilde{\Gamma}_\tau = e^{-f/2}\). Since $f-\frac{r^2}{4\tau}-\frac{\tr_eh}{2} \in \C^3_{-2\beta}(M)$ by \eqref{eq:control f radial},
        \begin{equation}
            \nabla \tilde{\Gamma}_\tau = (1+O(r^{-\beta}))\nabla \Gamma_\tau + \Gamma_\tau O(r^{-\beta-1}),
        \end{equation}
        so
        \begin{equation}
            |\nabla \tilde{\Gamma}_\tau| \lesssim \tau^{\varepsilon-1} + r^{-\beta-1} \lesssim \tau^{-\varepsilon(\beta+1)} + r^{-\beta-1} \quad \mathrm{and} \quad \tau^{-\varepsilon}|u_\infty - \tilde{\Gamma}_\tau| \lesssim \tau^{\varepsilon -1} + \tau^{-\varepsilon(\beta+1)} \lesssim \tau^{-\varepsilon(\beta+1)},
        \end{equation}
        where we use that \(\varepsilon - 1 < -\varepsilon(\beta+1)\).
        Then
        \begin{equation}
            \tilde{u}_\tau-u_\infty=(1-\chi)(\tilde{\Gamma}_\tau-u_\infty) = O(\tau^{-\varepsilon \beta}),
        \end{equation}
        \begin{equation}
            |\nabla(\tilde{u}_\tau-u_\infty)| \leq |(1-\chi)(\nabla \tilde{\Gamma}_\tau-\nabla u_\infty)| + |\nabla \chi(u_\infty-\tilde{\Gamma}_\tau)| = O(\tau^{-\varepsilon(\beta+1)}).
        \end{equation}
        Then using that $\scal_g = O(r^{-\beta-2})$,
        \begin{align}
            \scal_g\tilde{u}_\tau^2 = \scal_gu_\infty^2 + O(\tau^{-2\varepsilon(\beta+1)}) \quad \mathrm{and} \quad |\nabla \tilde{u}_\tau|^2 = |\nabla u_\infty|^2+O(\tau^{-2\varepsilon(\beta+1)}).
        \end{align}
        Integrating over the annulus gives the desired result.
    \end{proof}
\end{lemma}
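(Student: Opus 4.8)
The plan is to show that on the interpolation annulus $A(\tau^\varepsilon,3\tau^\varepsilon)$ the test function $\tilde u_\tau=\chi_\tau u_\infty+(1-\chi_\tau)e^{-f/2}$ is $C^1$-close to the minimizer $u_\infty$, with an error that is a negative power of $\tau^{\varepsilon}$, and then integrate over the annulus. First I would set $\tilde\Gamma_\tau:=e^{-f/2}$ and compare it with the flat Gaussian $\Gamma_\tau=e^{-r^2/(8\tau)}$. The crucial input is the fine control \eqref{eq:control f radial} on $f$ coming from the radial gauge of Proposition \ref{noncompact-asymptotics-with-radial-gauge}: since $f-\frac{r^2}{4\tau}-\frac{\tr_e h}{2}\in\C^3_{-2\beta}(M)$ and $\tr_e h=O(r^{-\beta})$, one gets $\frac{f}{2}=\frac{r^2}{8\tau}+O(r^{-\beta})$ together with the matching first-derivative bound, hence $\tilde\Gamma_\tau=(1+O(r^{-\beta}))\,\Gamma_\tau$ and $\nabla\tilde\Gamma_\tau=(1+O(r^{-\beta}))\,\nabla\Gamma_\tau+O(r^{-\beta-1})\,\Gamma_\tau$.

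Next I would evaluate everything on the annulus, where $r\asymp\tau^{\varepsilon}$, so that $\Gamma_\tau=1+O(\tau^{2\varepsilon-1})$ and $|\nabla\Gamma_\tau|=O(\tau^{\varepsilon-1})$. Since $\beta<n-2$ gives $\varepsilon(\beta+2)<\varepsilon n<1$ under the standing hypothesis $\varepsilon<\frac{1}{n+2}$, the term $\tau^{\varepsilon-1}$ is dominated by $\tau^{-\varepsilon(\beta+1)}$, so $|\nabla\tilde\Gamma_\tau|\lesssim\tau^{-\varepsilon(\beta+1)}+r^{-\beta-1}$. Combining this with the bounds $c\le u_\infty\le 1$, $u_\infty=1+O(r^{-\beta})$ and $\nabla u_\infty=O(r^{-\beta-1})$ furnished by Proposition \ref{energy-minimizers-for-positive-scalar-curvature}, I obtain $|u_\infty-\tilde\Gamma_\tau|\lesssim\tau^{-\varepsilon\beta}$ on the annulus, and therefore $\tau^{-\varepsilon}|u_\infty-\tilde\Gamma_\tau|\lesssim\tau^{-\varepsilon(\beta+1)}$.

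Then, writing $\tilde u_\tau-u_\infty=(1-\chi_\tau)(\tilde\Gamma_\tau-u_\infty)$ and $\nabla(\tilde u_\tau-u_\infty)=(1-\chi_\tau)(\nabla\tilde\Gamma_\tau-\nabla u_\infty)-\nabla\chi_\tau\,(u_\infty-\tilde\Gamma_\tau)$, and using $|\nabla\chi_\tau|\lesssim\tau^{-\varepsilon}$, the previous estimates give $\tilde u_\tau-u_\infty=O(\tau^{-\varepsilon\beta})$ and $\nabla(\tilde u_\tau-u_\infty)=O(\tau^{-\varepsilon(\beta+1)})$. Expanding the squares, using that $u_\infty,\tilde u_\tau$ are bounded and that $\scal_g=O(r^{-\beta-2})=O(\tau^{-\varepsilon(\beta+2)})$ on the annulus, I get the pointwise bound $4|\nabla\tilde u_\tau|^2+\scal_g\tilde u_\tau^2=4|\nabla u_\infty|^2+\scal_g u_\infty^2+O(\tau^{-2\varepsilon(\beta+1)})$. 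Since $A(\tau^\varepsilon,3\tau^\varepsilon)$ has $g$-volume $O(\tau^{\varepsilon n})$ (the metric being uniformly comparable to $g_e$ there), integrating produces the claimed error $O(\tau^{\varepsilon n}\cdot\tau^{-2\varepsilon(\beta+1)})=O(\tau^{\varepsilon(n-2\beta-2)})$.

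The only genuinely delicate point is the control of $\nabla\tilde\Gamma_\tau=\nabla e^{-f/2}$: in an arbitrary ALE gauge there is no reason for $f$ to agree with $\frac{r^2}{4\tau}+\frac{\tr_e h}{2}$ up to a $\C^3_{-2\beta}$ error, and differentiating the $r^2/(4\tau)$ term then brings in a factor of $r$ which could interact badly with the slowly decaying pieces; it is precisely Proposition \ref{noncompact-asymptotics-with-radial-gauge}, built on the radial gauge of \cite{x-ray-transform}, that prevents this. Everything else is routine weighted-H\"older bookkeeping together with the elementary inequality $\varepsilon(\beta+2)<1$.
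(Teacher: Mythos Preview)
Your proposal is correct and follows essentially the same route as the paper's proof: set $\tilde\Gamma_\tau=e^{-f/2}$, use the $\C^3_{-2\beta}$ control on $f-\tfrac{r^2}{4\tau}-\tfrac{\tr_e h}{2}$ from the radial gauge to bound $\tilde\Gamma_\tau$ and $\nabla\tilde\Gamma_\tau$, compare with $u_\infty$ via $u_\infty-1=O(r^{-\beta})$ and $\nabla u_\infty=O(r^{-\beta-1})$, write $\tilde u_\tau-u_\infty=(1-\chi_\tau)(\tilde\Gamma_\tau-u_\infty)$ and its gradient, and integrate the resulting $O(\tau^{-2\varepsilon(\beta+1)})$ pointwise error over a region of volume $O(\tau^{\varepsilon n})$. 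Your justification of the inequality $\varepsilon-1<-\varepsilon(\beta+1)$ via $\varepsilon(\beta+2)<\varepsilon n<1$ and your closing remark on why the radial gauge is essential are both on point.
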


Notice that in the proofs of Lemmas \ref{compact-asymptotics} and \ref{cutoff-asymptotics}, we used that \(\scal_{g_R}\) vanished outside a compact set. In general, we can only assume that \(\scal_g = O(r^{-\beta-2})\), and \(\int_M \scal_g dV_g\) may diverge. This is why we need to use the very precise gauge introduced in \S\ref{existence-of-radial-gauge}. In particular, this choice of gauge and Proposition \ref{noncompact-asymptotics-with-radial-gauge} with $R=3\tau^{\varepsilon}$ guarantee the following estimate for the noncompact region.
\begin{lemma} \label{noncompact-asymptotics-for-arbitrary-ALE}
    We have the estimate
    \begin{equation}
     \int_{M\setminus B_{3\tau^\varepsilon}}[\tau(4|\nabla \tilde{u}_\tau|^2 + \scal_g  \tilde{u}_\tau^2) - \tilde{u}_\tau^2\log(\tilde{u}_\tau^2) - n\tilde{u}_\tau^2](4\pi \tau)^{-\frac{n}{2}}dV_g = \frac{\tau}{(4\pi\tau)^{\frac{n}{2}}}\int_{S_{3\tau^\varepsilon}} \langle \nabla^e \tr_e(h),\nu \rangle_e dA_e + O(\tau^{\gamma}),
     \end{equation}
     where \(\gamma = \max(\varepsilon n -\frac{n}{2}, 1-\frac{n}{2} + \varepsilon(n-\min(2\beta +2,3\beta))) < 1-\frac{n}{2}\) given our assumptions.

     \begin{proof}
         Integrating by parts, we obtain
         \begin{align}
             &\int_{M\setminus B_{3\tau^\varepsilon}} (4|\nabla \tilde{u}_\tau|^2 +\scal_g\tilde{u}_\tau^2)dV_g \\ &= \int_{M\setminus B_{3\tau^\varepsilon}} (-4\tilde{u}_\tau \Delta \tilde{u}_\tau + \scal_g\tilde{u}_\tau^2)dV_g + \lim_{r\r \infty}\int_{S_r} 4\langle \nabla \tilde{u}_\tau,\nu \rangle_e \tilde{u}_\tau dA_e - \int_{S_{3\tau^\varepsilon}} 4\langle \nabla \tilde{u}_\tau,\nu \rangle_e \tilde{u}_\tau dA_e. \label{eq:energy-IBP}
         \end{align}
         Notice that $e^{-\frac{r^2}{4\tau}}$ decays to $0$ faster than how any polynomial diverges to infinity, so
         \begin{align} \label{eq:boundary-at-infinity}
             \int_{\{r=R\}} \langle \nabla \tilde{u}_\tau,\nu \rangle_e \tilde{u}_\tau dA_e = \int_{\{r=R\}} O(r)e^{-\frac{r^2}{4\tau}} dA_e = O(R^n)e^{-\frac{R^2}{4\tau}} \xrightarrow{R\r \infty} 0.
         \end{align}
         On the other hand, on $S_{3\tau^\varepsilon}$,
         \begin{align}
             -\langle \nabla \tilde{u}_\tau, \nu \rangle_e = \frac{1}{4}\langle \nabla \tr_eh, \nu \rangle \tilde{u}_\tau + \frac{r}{4\tau}\tilde{u}_\tau + O(r^{-2\beta-1}).
         \end{align}
         Then since $\nabla^e \tr_eh = O(r^{-\beta-1})=O(\tau^{-\varepsilon(\beta+1)})$ and $\tilde{u}_\tau^2 = 1+O(\tau^{2\varepsilon -1})$ on $S_{3\tau^\varepsilon}$,
         \begin{align}
             -\int_{S_{3\tau^\varepsilon}} 4\langle \nabla \tilde{u}_\tau,\nu \rangle_e \tilde{u}_\tau dA_e &= \int_{S_{3\tau^\varepsilon}} \langle \nabla \tr_eh,\nu \rangle \tilde{u}_\tau^2 dA_e + O(\tau^{\max(\varepsilon n-1, \varepsilon(n-(2\beta+2)))}) \\ &=\int_{S_{3\tau^\varepsilon}} \langle \nabla \tr_eh, \nu \rangle dA_e + O(\tau^{\max(\varepsilon n -1, \varepsilon(n-(2\beta+2)))}). \label{eq:boundary-at-tau-epsilon}
         \end{align}
         Plugging \eqref{eq:boundary-at-infinity} and \eqref{eq:boundary-at-tau-epsilon} into \eqref{eq:energy-IBP}, we obtain
         \begin{align}
             &\int_{M\setminus B_{3\tau^\varepsilon}} (4|\nabla \tilde{u}_\tau|^2 +\scal_g\tilde{u}_\tau^2)dV_g  \\ &= \int_{M\setminus B_{3\tau^\varepsilon}} (-4\tilde{u}_\tau \Delta \tilde{u}_\tau + \scal_g\tilde{u}_\tau^2)dV_g + \int_{S_{3\tau^\varepsilon}} \langle \nabla \tr_eh, \nu \rangle dA_e + O(\tau^{\max(\varepsilon n -1, \varepsilon(n-(2\beta+2)))}).
         \end{align}
         The lemma follows from Proposition \ref{noncompact-asymptotics-with-radial-gauge} after adding the Nash entropy term to the integrand.
     \end{proof}
\end{lemma}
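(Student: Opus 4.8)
The plan is to reduce the estimate to Proposition~\ref{noncompact-asymptotics-with-radial-gauge} applied with $R=3\tau^\varepsilon$, after peeling off — by a single integration by parts on the gradient term — the one contribution that does not decay, namely a $\tr_e h$-flux across $S_{3\tau^\varepsilon}$. Recall that on $M\setminus B_{3\tau^\varepsilon}$ there is no cutoff, so $\tilde u_\tau=e^{-f/2}$ exactly, with $f$ the function of Proposition~\ref{noncompact-asymptotics-with-radial-gauge} fixed by $e^{-f}\,dV_g=\Gamma_\tau^2\,dV_e$ in the radial gauge of \S\ref{existence-of-radial-gauge}.

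First I would write $\int_{M\setminus B_{3\tau^\varepsilon}}4|\nabla\tilde u_\tau|^2\,dV_g=-\int_{M\setminus B_{3\tau^\varepsilon}}4\tilde u_\tau\Delta\tilde u_\tau\,dV_g+(\text{boundary})$. The boundary contribution at spatial infinity vanishes because $\tilde u_\tau$ decays like $e^{-r^2/8\tau}$, which dominates any polynomial growth of $r^{n-1}\tilde u_\tau|\nabla\tilde u_\tau|$. For the boundary over $S_{3\tau^\varepsilon}$ I would use $\nabla\tilde u_\tau=-\tfrac12(\nabla f)\tilde u_\tau$ together with the expansion $f=\tfrac{r^2}{4\tau}+\tfrac{\tr_e h}{2}+O(r^{-2\beta})$ and its derivative bounds~\eqref{eq:control f radial}; since the gauge is radial, $\nabla^g r=\partial_r$, so on $S_{3\tau^\varepsilon}$ one gets $-\langle\nabla\tilde u_\tau,\nu\rangle=\big(\tfrac{r}{4\tau}+\tfrac14\langle\nabla\tr_e h,\nu\rangle+O(r^{-2\beta-1})\big)\tilde u_\tau$. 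Using $\tilde u_\tau^2=1+O(\tau^{2\varepsilon-1})$, $\nabla\tr_e h=O(\tau^{-\varepsilon(\beta+1)})$ on $S_{3\tau^\varepsilon}$, and $|S_{3\tau^\varepsilon}|=O(\tau^{\varepsilon(n-1)})$, plus the harmless replacements $\nabla\to\nabla^e$, $dA_g\to dA_e$, the boundary term becomes $\tfrac{\tau}{(4\pi\tau)^{n/2}}\int_{S_{3\tau^\varepsilon}}\langle\nabla^e\tr_e h,\nu\rangle_e\,dA_e+O(\tau^{\varepsilon n-n/2})+O(\tau^{1-n/2+\varepsilon(n-2\beta-2)})$.

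Next, after the integration by parts I would identify the remaining bulk integrand: using $\Delta e^{-f/2}=e^{-f/2}\big(\tfrac14|\nabla f|^2-\tfrac12\Delta f\big)$ and $\log\tilde u_\tau^2=-f$, adding back the terms $\tau\scal_g\tilde u_\tau^2-\tilde u_\tau^2\log\tilde u_\tau^2-n\tilde u_\tau^2$ turns the bulk part into exactly $\big(\tau(2\Delta f-|\nabla f|^2+\scal_g)+f-n\big)\tfrac{e^{-f}}{(4\pi\tau)^{n/2}}$, i.e.\ the integrand of Proposition~\ref{noncompact-asymptotics-with-radial-gauge}. Applying that proposition with $R=3\tau^\varepsilon$ (legitimate once $3\tau^\varepsilon>\rho$, hence for large $\tau$) bounds this bulk integral by $O\big(\tau^{1-n/2}(3\tau^\varepsilon)^{\,n-\min(2\beta+2,3\beta)}\big)$. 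Collecting the three error rates gives $\gamma=\max\big(\varepsilon n-\tfrac n2,\ 1-\tfrac n2+\varepsilon(n-\min(2\beta+2,3\beta))\big)$, and $\gamma<1-\tfrac n2$ follows from $\varepsilon<\tfrac1{n+2}$ and the order hypothesis $\beta>\max(\tfrac{n-2}{2},\tfrac n3)$, which forces $\min(2\beta+2,3\beta)>n$.

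The main obstacle is not any single estimate but the fact that this region must be treated in the radial gauge: in a generic ALE chart the terms carrying $f\sim r^2/4\tau$ — through $\Hess f$, $|\nabla f|^2$, and the cubic remainder $Q(h,f)$ — do not decay fast enough to be absorbed into $O(\tau^\gamma)$, and since $\int_M\scal_g\,dV_g$ may diverge one genuinely cannot fall back on the flat Gaussian computation used for $g_R$. It is Proposition~\ref{noncompact-asymptotics-with-radial-gauge}, exploiting $h(\partial_r,\cdot)=0$, that makes the bulk contribution negligible; the present lemma is then the bookkeeping converting its output into the form needed when assembling $\mu(g,\tau)$ from the compact, cutoff, and non-compact pieces — in particular it must keep the flux $\int_{S_{3\tau^\varepsilon}}\langle\nabla^e\tr_e h,\nu\rangle_e\,dA_e$ alive, since this is the term that, combined with the compact-region computation (which supplies the $-\int\langle\div_e h,\nu\rangle_e$ part), reconstructs the mass inside $\lambda_{\ALE}(g)=\lambda^0_{\ALE}(g)-\mathfrak m(g)$.
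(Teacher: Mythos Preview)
Your proposal is correct and follows essentially the same route as the paper: integrate by parts on the gradient term, kill the boundary at infinity by Gaussian decay, expand $-\langle\nabla\tilde u_\tau,\nu\rangle$ on $S_{3\tau^\varepsilon}$ via $f=\tfrac{r^2}{4\tau}+\tfrac12\tr_e h+O(r^{-2\beta})$ to extract the $\tr_e h$-flux, and then recognize the remaining bulk as the integrand of Proposition~\ref{noncompact-asymptotics-with-radial-gauge} with $R=3\tau^\varepsilon$. Your write-up is if anything slightly more explicit than the paper's in spelling out the identity $\Delta e^{-f/2}=e^{-f/2}(\tfrac14|\nabla f|^2-\tfrac12\Delta f)$ and $\log\tilde u_\tau^2=-f$, and in tracking how the $\tfrac{r}{4\tau}$ piece of the boundary term lands in $O(\tau^{\varepsilon n-n/2})$.
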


Having estimated the compact, cutoff, and noncompact regions, we are ready to prove Theorem \ref{large-scale-behavior-of-mu}.

\bigskip
\noindent
\emph{Proof of Theorem \ref{large-scale-behavior-of-mu}.}

Since $\tilde{u}_\tau = u_\infty + O(\tau^{-\varepsilon \beta})$ on $A(\tau^\varepsilon,3\tau^\varepsilon)$ and $\tilde{u}_\tau=u_\infty$ on $B_{\tau^\varepsilon}$, there is a constant $c>0$ such that $c^{-1}\leq \tilde{u}_\tau \leq c$ on $B_{3\tau^\varepsilon}$, so the Nash entropy integral on $B_{3\tau^\varepsilon}$ is $O(\tau^{\varepsilon n - \frac{n}{2}})$ as before.
Then by Lemma \ref{cutoff-asymptotics-for-arbitrary-ALE},
\begin{align}
    &\int_{B_{3\tau^\varepsilon}}[\tau(4|\nabla \tilde{u}_\tau|^2 + \scal_g \tilde{u}_\tau^2) - \tilde{u}_\tau^2\log(\tilde{u}_\tau^2) - n\tilde{u}_\tau^2](4\pi \tau)^{-\frac{n}{2}} dV_g \\ &= \frac{\tau}{(4\pi\tau)^{\frac{n}{2}}}\int_{B_{3\tau^\varepsilon}}(4|\nabla u_\infty|^2 + \scal_g u_\infty^2) dV_g + O(\tau^{\varepsilon n - \frac{n}{2}}) + O(\tau^{1-\frac{n}{2}+\varepsilon(n-2\beta-2)}).
\end{align}
In radial gauge, $\div_e(h)=0$, so by \eqref{eq:lambda-ALE-DO},
\begin{align}
    &\frac{\tau}{(4\pi\tau)^{\frac{n}{2}}}\left[\int_{B_{3\tau^\varepsilon}} (4|\nabla u_\infty|^2 + \scal_g\, u_\infty^2) dV_g + \int_{S_{3\tau^\varepsilon}}\left\langle \nabla^e \tr_e(h),\nu \right\rangle_e dA_g\right]\\ &= \frac{\tau}{(4\pi\tau)^{\frac{n}{2}}}\lambda_{\ALE}(g) + O(\tau^{1-\frac{n}{2}+\varepsilon(n-2\beta-2)}).
\end{align}
Combining this estimate with Lemma \ref{noncompact-asymptotics-for-arbitrary-ALE}, we obtain
\begin{align}
    \mathcal{W}(\tilde{u}_\tau,g,\tau) = \frac{\tau}{(4\pi\tau)^{\frac{n}{2}}}\lambda_{\ALE}(g) + O(\tau^\gamma).
\end{align}
By Lemma \ref{normalization-interpolation-g}, the same estimate holds for $\mathcal{W}(u_\tau,g,\tau)$. Thus,
\begin{equation}
    \mu_{\ALE}(g,\tau) \leq \mathcal{W}(u_\tau,g,\tau) = \frac{\tau}{(4\pi\tau)^{\frac{n}{2}}}\lambda_{\ALE}(g) + O(\tau^\gamma).
\end{equation}
The theorem now follows from \eqref{eq:mu-ALE-alternate}. \qed

\begin{remark}\label{arbitrary-ALE-metric-remark}
    As mentioned in the introduction, Theorem \ref{large-scale-behavior-of-mu} can be extended to arbitrary ALE metrics in the following way. 
    
    On $B_{\tau^\varepsilon}$, we instead take $\tilde{u}_\tau$ to be any function $v$ such that $v-1\in C^{2,\alpha}_{-\beta}(M)$. If we define the functional $
    \mathcal{G}_{\ALE}$ by
    \begin{equation}
        \mathcal{G}_{\ALE}(v,g):= \lim_{R\r\infty}\left(\int_{\{r\leq R\}} (4|\nabla v|^2 + \scal_g\, v^2) dV_g - \int_{\{r=R\}} \left\langle \div_e(h)-\nabla^e \tr_e(h),\nu \right\rangle_e dA_g\right),
    \end{equation}
    then
    \begin{equation}
    \int_{B_{3\tau^\varepsilon}} (4|\nabla v|^2 + \scal_g\, v^2) dV_g - \int_{S_{3\tau^\varepsilon}} \left\langle \div_e(h)-\nabla^e \tr_e(h),\nu \right\rangle_e dA_g = \mathcal{G}_{\ALE}(v,g) + O(\tau^{\varepsilon(n-2\beta-2)}).
    \end{equation}
    Following the steps of the proof in the $\scal_g\geq 0$ case, we obtain
    \begin{equation}
        \mu_{\ALE}(g,\tau) \leq \frac{\tau}{(4\pi\tau)^{\frac{n}{2}}} \mathcal{G}_{\ALE}(v,g) + O(\tau^\gamma).
    \end{equation}
    By choosing $v$ appropriately, this estimate can be made arbitrarily close to the original estimate \eqref{eq:expansion mu}. For lack of application, we do not attempt to prove the existence of minimizers for any metric, which is likely true. 
\end{remark}

\section{Classification of ALE expanding solitons and ALE ancient flows} \label{classification-of-ALE-expanding-solitons-and-ancient-flows}

We now conclude the proofs of our main Theorems \ref{thm: anc ALE} and \ref{thm: exp ALE}. 

\subsection{ALE expanding soliton}

We first prove the simpler Theorem \ref{thm: exp ALE}. Our main tool is the following result from \cite{Bamler-Chen} adapted to our simpler situation of an ALE expanding soliton: 
\begin{proposition}[{\cite[Proposition 4.4]{Bamler-Chen}}]\label{prop:nu exp}
    Let $(M^n,g)$ be an ALE expanding soliton orbifold of order $\beta>0$. Then, one has the following inequality:
    \begin{equation}\label{eq: ineq nu expander}
       \nu(g)\geqslant \nu(\mathbb{R}^n/\Gamma).
    \end{equation}
\end{proposition}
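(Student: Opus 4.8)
\emph{Proof proposal.} Recall $\nu(g)=\inf_{\tau>0}\mu(g,\tau)$. The plan is to recover \eqref{eq: ineq nu expander} — which is \cite[Proposition 4.4]{Bamler-Chen} in the case at hand — from Perelman's entropy monotonicity along the self-similar Ricci flow generated by $(M^n,g)$, exploiting that this flow emanates from the flat cone $\mathbb{R}^n/\Gamma$. I would first normalize the soliton equation as $\Ric_g+\Hess_g f=-\tfrac12 g$, let $\varphi_t$ (with $\varphi_1=\operatorname{id}$) be generated by $\tfrac1t\nabla^g f$, and set $g(t):=t\,\varphi_t^*g$ for $t>0$. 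Then $(M^n,g(t))_{t\in(0,\infty)}$ is a complete immortal Ricci flow, ALE of order $\beta$ at each time by \cite{yu-li} and of bounded curvature on compact time intervals, and as $t\to0^+$ it converges — in the pointed Gromov--Hausdorff sense, smoothly away from a single point — to its tangent cone at infinity, which by the ALE hypothesis is $(\mathbb{R}^n/\Gamma,g_e,0)$.

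Next I would exploit scaling: by diffeomorphism invariance and parabolic scale invariance, $\mu(sg,\tau)=\mu(g,\tau/s)$, hence $\mu(g(t),\tau)=\mu(tg,\tau)=\mu(g,\tau/t)$, so $\mu(g(1),\tau)=\mu(g,\tau)$ and $\mu(g,\sigma)=\mu(\sigma^{-1}g,1)$. Fixing $\tau>0$ and applying Perelman's entropy monotonicity along $(M,g(t))_{t\in(0,1]}$ with backward-time parameter $\sigma(t)=\tau+1-t$ — which is valid on ALE flows because the Gaussian weight suppresses boundary terms at infinity, see \cite{ozuch-cones} — one gets, for every $s\in(0,1]$,
\begin{equation}
    \mu\!\left(g(s),\,\tau+1-s\right)\ \leq\ \mu(g(1),\tau)\ =\ \mu(g,\tau).
\end{equation}
Letting $s\to0^+$ and using $\mu(g(s),\tau+1-s)=\mu\big(g,\tfrac{\tau+1-s}{s}\big)$ together with the scaling identity, this becomes
\begin{equation}
    \liminf_{\sigma\to\infty}\mu(g,\sigma)\ =\ \liminf_{\sigma\to\infty}\mu(\sigma^{-1}g,1)\ \leq\ \mu(g,\tau).
\end{equation}

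The crucial step, and the one I expect to be the main obstacle, is to show that $\mu$ does not jump downward as $\sigma^{-1}g\to\mathbb{R}^n/\Gamma$:
\begin{equation}
    \liminf_{\sigma\to\infty}\mu(\sigma^{-1}g,1)\ \geq\ \mu(\mathbb{R}^n/\Gamma,1)\ =\ \nu(\mathbb{R}^n/\Gamma),
\end{equation}
the last equality by scale invariance of the cone. This is exactly the content of \cite[Proposition 4.4]{Bamler-Chen}. Concretely, I would argue that the metrics $\sigma^{-1}g$ are rescalings of one fixed ALE metric converging to $\mathbb{R}^n/\Gamma$ smoothly away from the vertex — a single point of codimension $n\geq3$, hence of zero capacity — and that an almost-minimizer of $\mu(\sigma^{-1}g,1)$, which by the log-Sobolev inequality and standard elliptic theory is a uniformly bounded, Gaussian-type bump of fixed width solving a definite equation, subconverges on compact subsets of $\mathbb{R}^n/\Gamma\setminus\{0\}$ to an admissible competitor on the cone with $\W$-energy at most $\liminf_\sigma\mu(\sigma^{-1}g,1)$; the $L^2$-normalization defect and the energy concentrated near the shrinking vertex region are both negligible because that region has volume tending to $0$ and the vertex has zero capacity. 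In Bamler's language, this is the continuity of the entropy under $\mathbb{F}$-convergence of $(M,g(s))$, as $s\to0^+$, to the static flow on $\mathbb{R}^n/\Gamma$.

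Combining the last three displays gives $\nu(\mathbb{R}^n/\Gamma)\leq\mu(g,\tau)$ for every $\tau>0$, and taking the infimum over $\tau$ yields \eqref{eq: ineq nu expander}. Everything except the no-downward-jump step is formal manipulation with scaling and Perelman monotonicity; the real work is that last step, where one must use the precise structure of the degeneration — a genuine rescaling limit of one fixed ALE metric to a cone whose vertex has codimension $\geq 3$ — rather than soft semicontinuity, since $\mu$, being an infimum of $\W$, is only upper semicontinuous in general.
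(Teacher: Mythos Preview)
The paper does not give its own proof of this proposition; it is quoted directly from \cite[Proposition~4.4]{Bamler-Chen} and used as a black box in the proof of Theorem~\ref{thm: exp ALE}. There is therefore nothing in the paper to compare your argument against.

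That said, your outline is a faithful reconstruction of how such a statement is proved. The scaling identities $\mu(g(t),\tau)=\mu(g,\tau/t)$ and the application of Perelman monotonicity on $[s,1]$ are correct and yield $\liminf_{\sigma\to\infty}\mu(g,\sigma)\leq\mu(g,\tau)$ for every $\tau>0$. You also correctly isolate the one non-formal step, namely the lower semicontinuity $\liminf_{\sigma\to\infty}\mu(\sigma^{-1}g,1)\geq\mu(\mathbb{R}^n/\Gamma,1)$, and your sketch --- extract a subsequential limit of almost-minimizers away from the vertex, use that the vertex has codimension $\geq 3$ hence zero capacity so no energy or mass is lost there --- is exactly the mechanism behind \cite[Proposition~4.4]{Bamler-Chen}. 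The only caution is that this step genuinely requires work (uniform control on the almost-minimizers near the collapsing region), and you should not treat it as routine; but you flag this yourself.
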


\begin{proof}[Proof of Theorem \ref{thm: exp ALE}]
    Let $(M^n,g)$ be an expanding ALE soliton, i.e. an expanding soliton whose cone at infinity is flat. Then, the argument of \cite[Theorem 3.3.1]{siepmann-thesis} applies to the function $|\Rm|$ in place of $|\Ric|$ since the asymptotic cone is flat. Consequently, the full curvature tensor decays exponentially fast at infinity. From the construction of coordinates of \cite{Bando-Kasue-Nakajima} (for instance), we may obtain ALE coordinates of arbitrary order $\beta$ for $g$, in particular $\beta>\max(\frac{n-2}{2},\frac{n}{3})$.

    Assume towards a contradiction that $(M^n,g)$ is a non-flat expanding soliton which is ALE of order $\beta>\max(\frac{n-2}{2},\frac{n}{3})$, with group at infinity satisfying the assumptions of \cite[Theorem 5.1]{dahl}. 
    
    Given Theorem \ref{large-scale-behavior-of-mu} and \eqref{eq:lambda negative} in Proposition \ref{prop: sign lambda spin}, we see that for some large $\tau$, $\mu_{\ALE}(g,\tau) < 0$, hence by definition,
    $$ \nu(g)< \nu(\mathbb{R}^n/\Gamma). $$
    This contradicts \eqref{eq: ineq nu expander} in Proposition \ref{prop:nu exp}.
\end{proof}

\subsection{ALE ancient Ricci flows}

We now turn to the proof of Theorem \ref{thm: anc ALE}. It relies on the following result to compare with Proposition \ref{prop:nu exp} in the context of ancient Ricci flows $(g_t)_{t\in(0,T]}$ satisfying the mild technical assumption that $g_t$ has \textit{bounded curvature within each compact time interval}, namely, for all $-\infty<t_1<t_2<T$,
\begin{equation}\label{eq:bdd curv finite time}
    \sup_{M\times [t_1,t_2]}|\Rm_{g_t}|_{g_t} < +\infty.
\end{equation}
This is satisfied by our ALE Ricci flows defined in Definition \ref{def:ALErf}.

\begin{proposition}\label{prop: ineq nu ancient}
    Let $(M,g_t)_{\{-\infty<t\leqslant0\}}$ be an ancient Ricci flow with bounded curvature on compact time-intervals as above. Assume that its tangent flow at $t\to-\infty$ is the Gaussian soliton on $\mathbb{R}^n/\Gamma$ for $\Gamma\subset SO(n)$ in the sense of \cite{bamler-21-1}. Then, one has for any $-\infty<t\leqslant0$,
    \begin{equation}\label{eq: ineq nu ancient}
        \nu(g_t) \geqslant \nu(\mathbb{R}^n/\Gamma).
    \end{equation}
\end{proposition}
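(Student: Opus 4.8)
The plan is to exploit the monotonicity of Perelman's $\mu$-functional along the Ricci flow together with the identification of the tangent flow at $-\infty$ with the Gaussian soliton on $\mathbb{R}^n/\Gamma$, along the lines suggested by Zilu Ma (as acknowledged). Recall that for a fixed time $t_0$ and a fixed scale $\tau_0>0$, one has $\nu(g_{t_0}) = \inf_{\tau>0}\mu(g_{t_0},\tau)$, and that along the flow $s\mapsto g_s$, the function $s\mapsto \mu(g_s,\tau_0+(t_0-s))$ is nondecreasing, provided one stays in a regime where $\mu$ is attained and finite (which is guaranteed here by the ALE hypothesis via \cite{ozuch-cones} and the bounded-curvature-on-compact-intervals assumption \eqref{eq:bdd curv finite time}). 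Equivalently, fixing the "terminal" data $(t_0,\tau_0)$ and flowing backward in time, $\mu(g_{t_0},\tau_0) \geq \mu(g_s, \tau_0 + t_0 - s)$ for every $s < t_0$; in particular, letting $s\to -\infty$, the left side dominates the limiting value read off from the tangent flow at $-\infty$.

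The key steps, in order, are as follows. First I would fix $-\infty < t \leq 0$ and an arbitrary $\tau_0 > 0$, and set $\sigma(s) := \tau_0 + t - s$, so that $\sigma(t)=\tau_0$ and $\sigma(s)\to +\infty$ as $s\to -\infty$. Second, I would invoke Perelman's monotonicity for the conjugate-heat-kernel-renormalized $\mathcal{W}$-functional to get $\mu(g_t,\tau_0) \geq \mu(g_s,\sigma(s))$ for all $s \leq t$. Third — this is the crux — I would pass to the limit $s\to -\infty$: by the assumption that the tangent flow at $-\infty$ is the Gaussian soliton on $\mathbb{R}^n/\Gamma$ in the sense of \cite{bamler-21-1}, one has $\lim_{s\to-\infty}\mu(g_s,\sigma(s)) = \mu(\mathbb{R}^n/\Gamma) = \nu(\mathbb{R}^n/\Gamma)$, the last equality because $\mu(\mathbb{R}^n/\Gamma,\tau) = -\log|\Gamma|$ is independent of $\tau$. (Here one uses Bamler's compactness/continuity theory: the pointed Nash-entropy-based convergence of the rescaled flows to the tangent flow carries with it convergence of the relevant $\mu$-type quantities, together with the fact that the parabolic rescalings that realize the tangent flow send the pair $(g_s,\sigma(s))$ into a bounded region of scales.) This yields $\mu(g_t,\tau_0) \geq \nu(\mathbb{R}^n/\Gamma)$. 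Finally, since $\tau_0 > 0$ was arbitrary, taking the infimum over $\tau_0$ gives $\nu(g_t) = \inf_{\tau_0>0}\mu(g_t,\tau_0) \geq \nu(\mathbb{R}^n/\Gamma)$, which is \eqref{eq: ineq nu ancient}.

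The main obstacle I expect is making the limit in the third step rigorous: one must justify that $\liminf_{s\to-\infty}\mu(g_s,\sigma(s)) \geq \nu(\mathbb{R}^n/\Gamma)$ (an upper bound on $\mu$ by $\nu(\mathbb{R}^n/\Gamma)$ in the limit would actually suffice since monotonicity already gives the lower bound on $\mu(g_t,\tau_0)$, so in fact only $\limsup_{s\to-\infty}\mu(g_s,\sigma(s)) \geq$ something is needed — more precisely one needs $\mu(g_t,\tau_0)\geq \lim\mu(g_s,\sigma(s))$ and then a lower bound $\lim\mu(g_s,\sigma(s))\geq \nu(\mathbb{R}^n/\Gamma)$). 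Controlling $\mu$ under the non-compact, possibly-singular metric-measure convergence to the tangent flow requires the machinery of \cite{bamler-21-1, bamler-2023}: in particular the lower semicontinuity of the Nash entropy / pointed $\mathbb{F}$-convergence and the fact that the $\mathcal{W}$-functional based at a conjugate heat kernel centered at an appropriate spacetime point detects exactly $\mu(\mathbb{R}^n/\Gamma)$ in the limit because the tangent flow is static and equal to $(\mathbb{R}^n/\Gamma,g_e)$. Subtleties about whether the infimum defining $\mu(g_s,\sigma(s))$ is attained (handled via \cite{ozuch-cones} in the ALE class and bounded curvature on $[s,t]$) and about uniformity of scales along the rescaling also need care, but are technical rather than conceptual.
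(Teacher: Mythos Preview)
Your outline captures the right intuition (monotonicity plus identification of the tangent flow), and you correctly flag the third step as the crux. However, the way you propose to fill that gap does not quite work, and the paper's proof takes a sharper route precisely there.

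The issue is that Bamler's $\mathbb{F}$-convergence to the tangent flow gives continuity of the \emph{pointed Nash entropy} $\mathcal{N}_{x_0,t_0}(\tau)$ (Definition~\ref{def:Nash}), not of $\mu(g_s,\sigma(s))$. These are different objects: $\mu$ is an infimum over all admissible test functions, while the Nash entropy is evaluated at the conjugate heat kernel. There is no general semicontinuity statement for $\mu$ under $\mathbb{F}$-convergence in the direction you need, so the sentence ``the pointed Nash-entropy-based convergence of the rescaled flows to the tangent flow carries with it convergence of the relevant $\mu$-type quantities'' is not justified. Moreover, even after parabolic rescaling so that the metrics converge, the scale $\sigma(s)=\tau_0+t-s$ need not land in any bounded window, so your claim that the pair $(g_s,\sigma(s))$ ends up ``in a bounded region of scales'' is not clear either.

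The paper's proof avoids this by never trying to pass $\mu$ through the limit. It works entirely with the pointed Nash entropy: (i) Bamler's continuity of the Nash entropy under $\mathbb{F}$-convergence (\cite[Theorem 2.10]{bamler-21-1}, extended to the noncompact bounded-curvature setting via \cite[Appendix A]{bamler-21-2}) shows that the Nash entropy along the flow is bounded below by the Nash entropy of the tangent soliton $\mathbb{R}^n/\Gamma$; (ii) \cite[Theorem 1.7]{chan-ma-zhang-2} identifies that limiting Nash entropy with $\nu(\mathbb{R}^n/\Gamma)$; and (iii) the separate comparison \cite[Theorem 1.1]{chan-ma-zhang-1} gives $\nu(g_t)\geq$ (Nash entropy of the tangent soliton). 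Step (iii) is exactly the link between $\nu$ and the Nash entropy that your argument is missing; it is a nontrivial input, not a formal consequence of $\mu$-monotonicity. If you want to salvage your outline, you should replace the raw $\mu$-monotonicity by Nash-entropy monotonicity and then quote \cite{chan-ma-zhang-1,chan-ma-zhang-2} for the two comparisons.
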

\begin{proof}
    The above proposition is not strictly speaking available in the literature in this form, but is known to the expert as a combination of other results. We thank Zilu Ma for explaining this proof to us. The steps are as follows:
    \begin{enumerate}
        \item In \cite[Theorem 2.10]{bamler-21-1}, Bamler shows that the pointed Nash entropy, defined below in \eqref{eq: bamler Nash} in Definition \ref{def:Nash}, is continuous with respect to his $\mathbb{F}$-convergence to the tangent solitons in the \textit{compact} case. It is also monotone.
        \item In the noncompact case, by \cite[Appendix A]{bamler-21-2}, the same result holds assuming that the flow has locally in time bounded curvature in the sense of \eqref{eq:bdd curv finite time}. Consequently, the Nash entropy along the flow is bounded below by the Nash entropy of $\mathbb{R}^n/\Gamma$, see Definition \ref{def:Nash} below. This lower bound is shown in \cite[Theorem 1.7]{chan-ma-zhang-2} to be $\nu(\mathbb{R}^n/\Gamma) = \mu(\mathbb{R}^n/\Gamma,\tau)$ for all $\tau>0$.
        \item Finally, \cite[Theorem 1.1]{chan-ma-zhang-1} shows that for any $t\in(-\infty ,T ] $, $\nu(g_t)$ is also bounded below by the Nash entropy of its tangent soliton, hence by the above point,
        $$ \nu(g_t) \geqslant \nu(\mathbb{R}^n/\Gamma). $$
    \end{enumerate}
\end{proof}

\begin{proof}[Proof of Theorem \ref{thm: anc ALE}]
    The proof is now virtually the same as that of Theorem \ref{thm: exp ALE}.

    Assume that $(M^n,g_t)_{-\infty<t\leqslant 0}$ is an ancient Ricci flow which is ALE of order $\beta>\max(\frac{n}{3},\frac{n-2}{2})$ with group at infinity satisfying the assumptions of \cite[Theorem 5.1]{dahl}. 
    
    Using Theorem \ref{large-scale-behavior-of-mu} and \eqref{eq:lambda negative} in Proposition \ref{prop: sign lambda spin}, we find that if $g_t$ did not have a parallel spinor, then
    $$ \nu(g_t) < \nu(\mathbb{R}^n/\Gamma). $$
    This contradicts \eqref{eq: ineq nu ancient} in Proposition \ref{prop: ineq nu ancient}, hence for every $t\in(-\infty,0)$, $g_t$ admits a parallel spinor. 
\end{proof}

\section{Further directions}\label{sec:further}

\subsection{Dynamical functionals along ALE Ricci flows}

As can be seen from the proof of Proposition \ref{prop: ineq nu ancient}, \textit{dynamical} functionals have become useful in the recent theory of Ricci flows. In this section, we review definitions of such functionals from \cite{hein-naber} which have been instrumental in the theory of \cite{bamler-21-1, bamler-2023,bamler-2021-3}. We then introduce an analogous dynamical $\lambda$-functional and conclude with related open questions.

\subsubsection{A pointed entropy functional}
In \cite{hein-naber}, Hein and Naber introduce a localized version of Perelman's entropy as follows. Given a Ricci flow $(M^n,g(t))$ defined for $t\in [-T,0]$, we associate to each point $(x_0,0) \in M\times \{0\}$ a weighted volume form $dV_{x_0}(t)=H_{x_0}(\cdot,t)dV_{g(t)}$, where $H_{x_0}(x,t)$ is the conjugate heat kernel based at $(x_0,0)$. We also write $H_{x_0}(x,t) = (4\pi|t|)^{-\frac{n}{2}}\exp(-f_{x_0}(x,t))$.

\begin{definition}[Pointed entropy, \cite{hein-naber}]

    The \emph{pointed entropy} at scale $|t|$ based at $x_0$ is defined by
    \begin{equation}
        \mathcal{W}_{x_0}(t) = \mathcal{W}(g(t),f_{x_0}(t),|t|).
    \end{equation}
\end{definition}
The time average of the pointed entropy is the \emph{pointed Nash entropy}:

\begin{definition}[Pointed Nash entropy, \cite{hein-naber}]\label{def:Nash}


    The \emph{pointed Nash entropy} at $x_0\in M$ and $t\in [-T,0)$ is defined by
    \begin{equation}
        \mathcal{N}_{x_0}(t) = \frac{1}{|t|}\int^0_t \mathcal{W}_{x_0}(s)ds = \int_M f_{x_0}(t)dV_{x_0}(t) - \frac{n}{2}.
    \end{equation}
\end{definition}
In \cite{bamler-2021-3}, Bamler defines a more general pointed Nash entropy by
\begin{equation}\label{eq: bamler Nash}
    \mathcal{N}_{x_0,t_0}(\tau) = \int_M f_{x_0}(t_0)(4\pi \tau)^{-\frac{n}{2}} \exp(-f_{x_0}(x,t_0)) dV_g - \frac{n}{2},
\end{equation}
where $\tau = t_0-t$. Evaluating this at $t_0=0$ and $t=t_0$ yields $\mathcal{N}_{x_0,0}(t_0) = \mathcal{N}_{x_0}(t_0)$, so these two formulations of the Nash entropy coincide for ancient Ricci flows.

In the same spirit as \cite{hein-naber}, we control the renormalized energy functional $\lambda_{\ALE}$ by introducing a new \emph{dynamical} functional which is defined using the conjugate heat flow.

\subsubsection{A dynamical \(\lambda\)-functional}

Let \((M^n,g(t))\) be a solution to the Ricci flow on an ALE manifold \(M^n\) with bounded curvature and non-negative, integrable scalar curvature, and fix some \(t_0 \in \R\). We also assume that $g(0)$ has bounded mass, which implies $g(t)$ has bounded mass since mass remains constant under the Ricci flow. Let \(\tau(t)=t_0-t\).

\begin{definition}[A dynamical \(\lambda\)-functional]
    The \emph{dynamical} \(\lambda\)-\emph{functional} \(\lambda^{t_0}_{\mathrm{dym}}\) at \(t_0\) is a function of time defined by
    \begin{equation}
        \lambda^{t_0}_{\mathrm{dym}}(t) := \mathcal{F}(f^{t_0}(t),g(t)) - \mathfrak{m}(g(t)),
    \end{equation}
    where \(f^{t_0}(x,t)\) is the solution to the equation
    \begin{equation} \label{eq:new-heat-flow}
        -\partial_tf^{t_0} = \Delta f^{t_0} - |\nabla f^{t_0}|^2 + \scal
    \end{equation}
    with initial condition \(f^{t_0}(x,t_0)\equiv 0\).
\end{definition}
\begin{remark}
    This definition makes sense without a mass term on compact manifolds as well. It might be interesting to study it on Ricci flows reaching a Ricci-flat metric.
\end{remark}
Suppose \(K(x_0,t_0;\cdot,t) = (4\pi \tau)^{-\frac{n}{2}}e^{-f}\) is the heat kernel of \(M\). Then \(f\) satisfies the equation
\begin{equation} \label{eq:original-heat-flow}
    -\partial_tf = \Delta f - |\nabla f|^2 + \scal - \frac{n}{2\tau}.
\end{equation}
If we start the heat flow at \(1\), then \(t_0,\tau \r \infty\) and \eqref{eq:original-heat-flow} becomes \eqref{eq:new-heat-flow}, motivating our definition of \(\lambda_{\mathrm{dym}}(t)\).

We will often write the equivalent equation
\begin{equation} \label{eq:evolution-of-f}
    \partial_\tau f = \Delta f - |\nabla f|^2 + \scal
\end{equation}
and compute variations with respect to \(\tau\).

It is of interest to note that \(\lambda^{t_0}_{\mathrm{dym}}\) is monotonic. Indeed, by \cite[Section 5.4]{chow-ricci-flow-1}, we have the monotonicity formula
\begin{equation}
    \frac{d}{dt}\lambda^{t_0}_{\mathrm{dym}}(t) = 2\int_M |\ric + \hess_f|^2 e^{-f}dV_g \geq 0,
\end{equation}
where we note that the right-hand side is well defined on ALE metrics of order $\beta>\frac{n-2}{2}$.

Our goal is to control \(\lambda^{t_0}_{\mathrm{dym}}\) with the renormalized functional \(\lambda_{\mathrm{ALE}}\) introduced in \cite{deruelle-ozuch-2020}. To this end, we begin by establishing the decay of the solution \(f\) to \eqref{eq:new-heat-flow}.

\begin{lemma} \label{u-upper-bound}
    The solution \(f\) to \eqref{eq:new-heat-flow} is non-negative, or equivalently, \(u:=e^{-f}\leq 1\).

    \begin{proof}
        We first note that
        \begin{equation} \label{eq:evolution-of-u}
            \partial_\tau u = -(\partial_\tau f)e^{-f} = \Delta u -\scal\, u.
        \end{equation}
        Using this and \(\scal \geq 0\),
        \begin{align}
            (\partial_\tau - \Delta)u^2 &= 2u\partial_\tau  u - (2u\Delta u + 2|\nabla u|^2) = -(2\scal \,u^2 + 2|\nabla u|^2) \leq 0.
        \end{align}
        Since $u(0)\leq 1$, the maximum principle implies \(u(\tau) \leq 1\) for all \(\tau\).
    \end{proof}
\end{lemma}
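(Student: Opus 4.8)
The plan is to reduce the statement to a scalar maximum principle after the substitution $u := e^{-f}$. First I would rewrite \eqref{eq:new-heat-flow} in the backward time variable $\tau = t_0 - t$, which turns it into $\partial_\tau f = \Delta f - |\nabla f|^2 + \scal$ (that is, \eqref{eq:evolution-of-f}) with initial condition $f|_{\tau = 0} \equiv 0$. Differentiating $u = e^{-f}$ gives $\partial_\tau u = -(\partial_\tau f)e^{-f}$, and since $\Delta u = e^{-f}\big(|\nabla f|^2 - \Delta f\big)$, the gradient term cancels and one obtains the clean linear equation $\partial_\tau u = \Delta u - \scal\, u$, with $u|_{\tau=0} \equiv 1$.

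Next I would square: $(\partial_\tau - \Delta)u^2 = 2u(\Delta u - \scal\, u) - 2u\,\Delta u - 2|\nabla u|^2 = -2\scal\, u^2 - 2|\nabla u|^2 \leq 0$, using only $\scal_g \geq 0$. So $u^2$ is a subsolution of the heat equation with initial data identically $1$; a maximum principle then yields $u^2 \leq 1$ on $M \times [0,\infty)$, and since $u = e^{-f} > 0$ this forces $u \leq 1$, equivalently $f \geq 0$.

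The one genuinely delicate point is the maximum principle on the noncompact ALE manifold: one must exclude the supremum of $u^2$ escaping to spatial infinity. Here I would use that the flow has bounded curvature on compact time intervals and is ALE, so that the solution $f$ of \eqref{eq:new-heat-flow} is globally bounded on $M \times [0,\tau]$ for each $\tau$ (it is the limit, as $t_0 \to \infty$, of the heat-kernel potential in \eqref{eq:original-heat-flow}, whose bounds are uniform in $t_0$). Consequently $u$ and $u^2$ are bounded, and the classical maximum principle for bounded subsolutions of the heat equation on a complete manifold with Ricci curvature bounded below applies. Alternatively, one can localize with an ALE cutoff function, using that $|\nabla u|$ and $\Delta u$ decay at infinity, and let the cutoff radius tend to $\infty$.

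I expect this completeness/boundedness justification to be the main obstacle; the algebraic identities above are routine once $u = e^{-f}$ is introduced. If a uniform bound on $f$ is not directly at hand, a fallback is to establish it first via a barrier built from the ALE decay $\scal_g = O(r^{-\beta-2})$ (which in particular makes the zeroth-order coefficient in $\partial_\tau u = \Delta u - \scal\, u$ controlled at infinity), and only then run the maximum principle to conclude $u \leq 1$.
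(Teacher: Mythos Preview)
Your proposal is correct and follows essentially the same route as the paper: rewrite the equation for $u=e^{-f}$ as $\partial_\tau u = \Delta u - \scal\,u$, compute $(\partial_\tau-\Delta)u^2 = -2\scal\,u^2 - 2|\nabla u|^2 \le 0$, and apply the maximum principle from the initial condition $u(0)=1$. Your additional discussion of the noncompact maximum principle is more careful than the paper's proof, which simply invokes it without further justification.
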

We now show that the decay of \(f\) is preserved in time. It is important to note that although the forward heat flow is considered in \cite{yu-li}, the arguments from \cite[Section 2]{yu-li} we use in this section only depend on the assumption that $(M^n,g(t))$ is a solution to the Ricci flow with bounded curvature on $M \times [0,\infty)$.

\begin{proposition} \label{decay-of-f}
    For all \(\tau\), there is a constant \(C(\tau)>0\) such that \(f^{t_0}(\tau) \leq C(\tau)r^{-2-\beta}\) for \(r\) sufficiently large.

    \begin{proof}
        By \cite[Theorem 2.2]{yu-li}, there is a constant \(C_1\) independent of \(t\) such that \(\scal(\tau) \leq C_1r^{-2-\beta}\). Then by \eqref{eq:evolution-of-f},
        \begin{equation}
            (\partial_\tau - \Delta)f^{t_0} \leq \scal \leq C_1r^{-2-\beta}.
        \end{equation}      
        Define \(h:=r^{2+\beta}\) and \(w:=hf^{t_0}\). Then
        \begin{align}
            (\partial_\tau - \Delta)w &= h(\partial_\tau - \Delta)f^{t_0} - 2\langle \nabla h,\nabla f^{t_0} \rangle - f^{t_0}\Delta h\\ &\leq C_1 - 2\langle \nabla h,\nabla f^{t_0} \rangle - f^{t_0}\Delta h \\ &= C_1 - 2\langle \nabla(\log h),\nabla w \rangle + Bw,
        \end{align}
        where \(B:= \frac{2|\nabla h|^2-h\Delta h}{h^2}\). By \cite[Theorem 2.2]{yu-li}, \(|B(\tau)|\leq C_2\) for some \(C_2\). Define \(G(x,\tau):=C_1+C_2x\). Then since \(w\geq 0\),
        \begin{equation}
            (\partial_\tau - \Delta)w \leq G(w,\tau) - 2\langle \nabla(\log h),\nabla w \rangle.
        \end{equation}
        Since \(w(0)=0\), 
        \begin{equation}
            C(\tau) = \frac{C_1}{C_2}\left(e^{C_2\tau}-1\right) \quad \mathrm{solves} \quad \begin{cases}
                U'(\tau) &= C_1+C_2U \\ U(0) &= w(0).
            \end{cases}
        \end{equation}
        We now have
        \begin{equation}
            Lw := (\partial_\tau - \Delta)w - \langle X(\tau),\nabla w \rangle - G(w,\tau) \leq 0,
        \end{equation}
        where \(X(\tau)=-2\nabla(\log h)\). It now follows from the maximum principle that \(w(\tau) \leq C(\tau)\). Then \(f^{t_0}(\tau) \leq C(\tau)r^{-2-\beta}\), as desired.
    \end{proof}
\end{proposition}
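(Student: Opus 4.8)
The plan is to propagate the initial decay $f^{t_0}(0)\equiv 0$ forward along the flow \eqref{eq:evolution-of-f} by a weighted maximum-principle argument, using the decay of the scalar curvature supplied by \cite{yu-li} as the driving inhomogeneity.

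First I would record the two inputs. By Lemma \ref{u-upper-bound} one has $f^{t_0}\geq 0$. By \cite[Theorem 2.2]{yu-li}, the ALE order $\beta$ is preserved with constants that are uniform on each compact time-interval, so there is $C_1>0$ (depending only on that interval) with $\scal_{g(\tau)}\leq C_1 r^{-2-\beta}$; the same result, together with Shi-type estimates, also makes $g(\tau)$ uniformly comparable to the fixed ALE structure and uniformly bounded in $C^2$ on the compact core over each compact time-interval. Dropping the favorable term $-|\nabla f^{t_0}|^2\leq 0$ in \eqref{eq:evolution-of-f} gives $(\partial_\tau-\Delta)f^{t_0}\leq \scal\leq C_1 r^{-2-\beta}$.

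Next I would introduce the weight $h:=r^{2+\beta}$ and set $w:=h\,f^{t_0}\geq 0$. A direct computation gives $(\partial_\tau-\Delta)w=h(\partial_\tau-\Delta)f^{t_0}-2\langle\nabla h,\nabla f^{t_0}\rangle-f^{t_0}\Delta h$, and substituting $f^{t_0}=w/h$ rewrites the last two terms as $-2\langle\nabla\log h,\nabla w\rangle+Bw$ with $B:=(2|\nabla h|^2-h\Delta h)/h^2$. On the ALE end one checks $|\nabla h|_{g(\tau)}\lesssim r^{1+\beta}$ and $|\Delta_{g(\tau)}h|\lesssim r^{\beta}$ from the uniform ALE structure, so $B=O(r^{-2})$ there, while on the compact core $h$ and its first two covariant derivatives are bounded above and below uniformly in $\tau$ over each compact time-interval; hence $|B(\tau)|\leq C_2$ for some $C_2>0$. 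Since $w\geq 0$, this exhibits $w$ as a subsolution:
\begin{equation}
    (\partial_\tau-\Delta)w+2\langle\nabla\log h,\nabla w\rangle-C_1-C_2 w\leq 0.
\end{equation}
Comparing with the spatially constant function $U(\tau):=\frac{C_1}{C_2}(e^{C_2\tau}-1)$, which solves $U'=C_1+C_2U$ with $U(0)=0$ and hence satisfies the corresponding equality, together with $w(0)=0=U(0)$, the maximum principle yields $w(\tau)\leq U(\tau)=:C(\tau)$, i.e. $f^{t_0}(\tau)\leq C(\tau)\,r^{-2-\beta}$ for $r$ large.

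The step I expect to be the main obstacle is making the maximum principle in the previous step rigorous on the noncompact ALE manifold: one must rule out the supremum of $w-U$ being attained only asymptotically at spatial infinity. For this I would first observe that $(\partial_\tau-\Delta)f^{t_0}\leq C_1$ and $f^{t_0}(0)=0$ force $f^{t_0}(\tau)\leq C_1\tau$ by comparison with the spatially constant supersolution $C_1\tau$, so $w$ grows at most polynomially (like $r^{2+\beta}$); combined with the Euclidean-type volume growth of an ALE manifold and the uniform curvature bounds, this polynomial growth is enough to run a parabolic maximum principle (via the usual cutoff/exhaustion argument, or a Karp--Li / Ecker--Huisken-type criterion). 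The other point requiring care is the uniform-in-time bound on $B$, which is precisely where the full strength of the estimates of \cite[Section 2]{yu-li}, and not merely the preservation of the order $\beta$, enters.
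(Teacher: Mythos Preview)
Your proposal is correct and follows essentially the same route as the paper: the same weight $h=r^{2+\beta}$, the same quantity $w=hf^{t_0}$, the same differential inequality with $B=(2|\nabla h|^2-h\Delta h)/h^2$, and the same ODE comparison function $U(\tau)=\frac{C_1}{C_2}(e^{C_2\tau}-1)$. If anything, you are more careful than the paper in justifying the noncompact maximum principle (via the a priori bound $f^{t_0}\leq C_1\tau$ and polynomial growth of $w$), a point the paper leaves implicit.
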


\begin{definition}[The $\lambda^\infty_{\mathrm{dym}}$-functional]
    We define the functional $\lambda^\infty_{\mathrm{dym}}$ by
    \begin{equation}
        \lambda^\infty_{\mathrm{dym}}(t) := \liminf_{t_0 \r \infty} \lambda^{t_0}_{\mathrm{dym}}(t) = \liminf_{t_0 \r \infty}\mathcal{F}(f^{t_0}(t),g(t)) - \mathfrak{m}_{\ADM}(g).
    \end{equation}
\end{definition}
In the above definition, we may omit the reference to a specific time $t$ in $\mathfrak{m}_{\ADM}(g(t))$ since the mass remains constant under the Ricci flow.

\begin{corollary}
    The $\lambda^\infty_{\mathrm{dym}}$-functional dominates $\lambda_{\ALE}$ in the sense that
    \begin{equation}
        \lambda^\infty_{\mathrm{dym}}(t) \geq \lambda_{\ALE}(t).
    \end{equation}
    \begin{proof}
        By Proposition \ref{decay-of-f}, $e^{-f^{t_0}(t)} = 1 + O(r^{-\beta-2})$. Also, since $\lambda^{t_0}_{\mathrm{dym}}(t_0)(t_0) = \int_M \scal_gdV_g - \mathfrak{m}(t_0)$ is well-defined in $\C^{2,\alpha}_{-\beta}$ and we integrate $2\int_M |\ric+\hess_f|^2 e^{-f}dV_g$ for other times, $|\nabla e^{-f/2}|^2$ is integrable. Thus, $\lambda^{t_0}_{\mathrm{dym}}(t) \geq \lambda_{\ALE}(g(t))$ by definition. Taking the limit infimum as $t_0 \r \infty$ gives the desired inequality.
    \end{proof}
\end{corollary}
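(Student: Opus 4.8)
The plan is to reduce the inequality to the defining property of $\lambda^0_{\ALE}$ by showing that, for each fixed $t_0$, the function $u:=e^{-f^{t_0}(t)/2}$ is an admissible competitor for the variational problem computing $\lambda^0_{\ALE}(g(t))$. Since Perelman's $\mathcal{F}$-functional can be rewritten as $\mathcal{F}(f,g)=\int_M(|\nabla f|^2+\scal_g)e^{-f}\,dV_g=\int_M(4|\nabla u|^2+\scal_g u^2)\,dV_g=\mathcal{F}_{\ALE}(u,g)$ for $u=e^{-f/2}$, admissibility gives $\mathcal{F}(f^{t_0}(t),g(t))=\mathcal{F}_{\ALE}(u,g(t))\geq\lambda^0_{\ALE}(g(t))$, hence $\lambda^{t_0}_{\mathrm{dym}}(t)=\mathcal{F}(f^{t_0}(t),g(t))-\mathfrak{m}(g(t))\geq\lambda^0_{\ALE}(g(t))-\mathfrak{m}(g(t))=\lambda_{\ALE}(g(t))$. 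As $\mathfrak{m}(g(t))$, and thus $\lambda_{\ALE}(g(t))$, does not depend on $t_0$, taking $\liminf_{t_0\to\infty}$ of the left-hand side yields $\lambda^\infty_{\mathrm{dym}}(t)\geq\lambda_{\ALE}(g(t))$.

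It then remains to verify the three defining conditions on $u=e^{-f^{t_0}(t)/2}$: smoothness, $u-1=O(r^{-\beta})$, and $|\nabla u|\in L^2(dV_{g(t)})$. Smoothness is immediate from parabolic regularity for the equation $\partial_\tau u=\Delta u-\scal\,u$ in \eqref{eq:evolution-of-u}. For the decay, I would combine Lemma \ref{u-upper-bound} (which gives $f^{t_0}(t)\geq0$) with Proposition \ref{decay-of-f} (which gives $0\leq f^{t_0}(t)\leq C(\tau)r^{-2-\beta}$ for $r$ large), so that $1-\tfrac12 C(\tau)r^{-2-\beta}\lesssim u\leq1$ for large $r$, i.e. $u-1=O(r^{-2-\beta})$, which is in particular $O(r^{-\beta})$. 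One should also note that $\lambda^0_{\ALE}(g(t))$ is finite: the competitor $u\equiv1$ gives $\lambda^0_{\ALE}(g(t))\leq\int_M\scal_{g(t)}\,dV_{g(t)}<\infty$ by the standing integrability hypothesis, and it is bounded below by Proposition \ref{energy-minimizers-for-positive-scalar-curvature}, so $\lambda_{\ALE}(g(t))$ is a genuine finite number and the claimed bound is meaningful.

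The main obstacle is the $L^2$-integrability of $\nabla u$, since a priori the gradient term of $\mathcal{F}$ could diverge and the pointwise decay above does not by itself control it. Here I would exploit the initial condition $f^{t_0}(t_0)\equiv0$ together with the monotonicity formula. At $\tau=0$ one has $\lambda^{t_0}_{\mathrm{dym}}(t_0)=\mathcal{F}(0,g(t_0))-\mathfrak{m}(g(t_0))=\int_M\scal_{g(t_0)}\,dV_{g(t_0)}-\mathfrak{m}(g(t_0))<\infty$, and the monotonicity formula $\frac{d}{dt}\lambda^{t_0}_{\mathrm{dym}}(t)=2\int_M|\Ric+\Hess_f|^2e^{-f}\,dV_g\geq0$ (well defined on ALE metrics of order $\beta>\tfrac{n-2}{2}$, as noted in the text) shows $\lambda^{t_0}_{\mathrm{dym}}(t)\leq\lambda^{t_0}_{\mathrm{dym}}(t_0)<\infty$ for $t\leq t_0$. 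Since $\scal_{g(t)}\geq0$ makes the potential term $\int_M\scal_{g(t)}u^2\,dV_{g(t)}$ nonnegative, this forces $4\int_M|\nabla u|^2\,dV_{g(t)}\leq\mathcal{F}(f^{t_0}(t),g(t))=\lambda^{t_0}_{\mathrm{dym}}(t)+\mathfrak{m}(g(t))<\infty$, whence $|\nabla u|\in L^2(dV_{g(t)})$. The delicate point in making this fully rigorous is that the monotonicity argument must not beg the question: one needs that $\mathcal{F}(f^{t_0}(t),g(t))$ stays finite and that $f^{t_0}$ remains in the weighted space $\C^{2,\alpha}_{-2-\beta}$ along the backward flow from $t_0$, which is precisely what Proposition \ref{decay-of-f} and the curvature estimates imported from \cite[Section 2]{yu-li} supply. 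With admissibility established, the chain of inequalities of the first paragraph completes the proof.
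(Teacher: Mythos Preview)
Your proof is correct and follows essentially the same route as the paper: verify that $u=e^{-f^{t_0}(t)/2}$ is an admissible competitor for $\lambda^0_{\ALE}(g(t))$ using the decay from Proposition~\ref{decay-of-f} and the $L^2$-integrability of $\nabla u$ from the monotonicity formula together with the finiteness of $\lambda^{t_0}_{\mathrm{dym}}(t_0)$, then take $\liminf_{t_0\to\infty}$. Your write-up is in fact more careful than the paper's sketch---you make explicit the role of $\scal_{g(t)}\geq 0$ in isolating the gradient term and flag the potential circularity in the monotonicity step, whereas the paper compresses all of this into one sentence.
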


\subsection{Open questions}
\subsubsection{A lower bound for the $\mu$-functional on ALE manifolds}
In Section \ref{large-scale-behavior}, we only prove that $\mu_{\ALE}(g,\tau)$ is bounded above by a quantity asymptotic to $\frac{\tau}{(4\pi \tau)^{\frac{n}{2}}}\lambda_{\ALE}(g)$, not that $\mu(g,\tau)$ is exactly asymptotic to $\frac{\tau}{(4\pi \tau)^{\frac{n}{2}}}\lambda_{\ALE}(g)$. 

Additionally, we recall that in Section \ref{large-scale-behavior}, we require that $\beta > \frac{n}{3}$ in order to apply Proposition \ref{noncompact-asymptotics-with-radial-gauge}. We invite the reader to generalize the result so that we can simply assume $\beta > \frac{n-2}{2}$, although we would ultimately like to get rid of these hypotheses of decay at infinity.

\begin{question}
    Let $(M,g)$ be ALE of order $\beta>\frac{n-2}{2}$ with nonnegative scalar curvature. Do we have 
    \begin{equation}\label{eq: ques asympt mu}
        \mu(g,\tau) = \mu(\mathbb{R}^n/\Gamma) + \frac{\tau}{(4\pi \tau)^{\frac{n}{2}}} \lambda_{\mathrm{ALE}}(g) + O(\tau^{\gamma}), \quad \gamma<1-\frac{n}{2}?
    \end{equation}
    What is the expansion of $\mu(\tau)$ if one only has $\beta>0$?
\end{question}

\begin{question}
    Are all ancient Ricci flows with tangent soliton $\mathbb{R}^n/\Gamma$ of order $\beta>\frac{n-2}{2}$?
\end{question}

\subsubsection{An asymptotic description of the minimizers of $\mu$ at large scales}
A previous attempt at proving Theorem \ref{large-scale-behavior-of-mu} involved finding an asymptotic description of the minimizers of $\mu(g,\tau)$ for large values of $\tau$; however, it is not clear to us how these minimizers should behave. These minimizers should be asymptotic to the standard Euclidean Gaussians on ALE metrics with $\Gamma\neq \{\operatorname{Id}\}$ and they should approach the minimizers $u_\infty$ in a compact part of the manifold, but the transition region is poorly controlled. It is unclear if these Gaussians should be ``centered'' in the AE case; the center of the Gaussian could drift to infinity. At the very least, it is known that these minimizers are exponentially decaying (see \cite[Theorem 2.3]{zhang}, for instance).

\subsubsection{Asymptotics of dynamical functionals}

Lastly, using the above dynamical counterparts of the functionals $\mu$ and $\lambda_{\ALE}$, we ask if one can hope to obtain a \textit{dynamical} analogue of \eqref{eq: ques asympt mu}.

\begin{question}
    Do the functionals $\lambda^{t_0}_{\mathrm{dym}}$ control the asymptotics of the pointed Nash entropy at large scales $|t|\gg 1$ along an ALE Ricci flow?
\end{question}
This question could be asked about both immortal and ancient Ricci flows on ALE spaces, for instance.

\newpage 
\bibliographystyle{alpha}
\bibliography{Refs2}
\nocite{*}

\end{document}